\newtheorem{theorem}{Theorem}[section]
\newtheorem{corollary}[theorem]{Corollary}
\newtheorem{lemma}[theorem]{Lemma}
\newtheorem{proposition}[theorem]{Proposition}
\theoremstyle{definition}
\newtheorem{definition}[theorem]{Definition}
\newtheorem{example}[theorem]{Example}
\newtheorem*{xrem}{Remark}
\newcommand{\st}{\stackrel}
\newcommand{\sub}{\subseteq}
\newcommand{\ov}{\overline}
\newcommand{\lo}{\longrightarrow}
\newcommand{\hin}{\widetilde{\in}}
\newcommand{\wt}{\widetilde}
\newcommand{\vf}{\varphi}
\newcommand{\fr}{\frac}
\newcommand{\al}{\alpha}
\numberwithin{equation}{section}
\begin{document}
\baselineskip=17pt
\title[On H-groups and their applications]
{On H-groups and their applications to topological homotopy groups}
\author[A. Pakdaman]{Ali Pakdaman}
\address{Department of Pure Mathematics\\ Ferdowsi University of Mashhad\\
P.O.Box 1159-91775, Mashhad, Iran.}
\email{Alipaky@yahoo.com}
\author[H. Torabi]{Hamid Torabi}
\address{Department of Pure Mathematics\\ Ferdowsi University of Mashhad\\
P.O.Box 1159-91775, Mashhad, Iran.}
\email{h$ _{-}$torabi86@yahoo.com}

\author[B. Mashayekhy]{Behrooz Mashayekhy}
\address{Department of Pure Mathematics\\ Center of Excellence in Analysis on Algebraic Structures\\ Ferdowsi University of Mashhad\\
P.O.Box 1159-91775, Mashhad, Iran.}
\email{bmashf@um.ac.ir}
\date{}
\begin{abstract}
This paper develops a basic theory of H-groups. We
introduce a special quotient of H-groups and
extend some algebraic constructions of topological groups to the category
of H-groups and H-maps. We use these constructions to prove some advantages in topological homotopy groups. Also, we present a family of spaces that their topological fundamental groups are indiscrete topological group and find out a family of spaces whose topological fundamental group is a topological group.
\end{abstract}

\subjclass[2010]{55P45; 55P35; 55Q05; 55U40; 54H11.}

\keywords{H-group, sub-H-group, loop space, Topological homotopy group}

\maketitle

\section{Introduction and Motivation}
An H-group is a homotopy associative H-space with a given homotopy inverse. There are two main classes of motivating examples of H-groups. The first is the class of topological groups and the second is the class of loop spaces. Topological groups have been studied from a variety of viewpoint. Specially there is an enriched developed basic theory for topological groups similar to abstract group theory.
However, it seems that there is no such a basic theory for H-groups. One can find only the concept of sub-H-group of an H-group in \cite{KP} and some elementary properties in \cite{D}. One of the main objects in this paper is to develop a basic theory for H-groups similar to abstract group theory.

On the other hand, loop spaces have the main role in homotopy groups especially in topological homotopy groups \cite{G1}. The topological n-th homotopy group of a pointed space $(X,x)$ is the familiar n-th homotopy group $\pi_n(X,x)$ by endowing a topology on it as a quotient of the n-loop space $\Omega^n(X,x)$ equipped with the compact-open topology, denoted by $\pi_n^{top}(X,x)$ \cite{G1}. The other main object of this paper is to study H-groups in order to get some applications to topological homotopy groups.

After giving main definitions and notations in section 2, we introduce in section 3 cosets of a sub-H-group, a normal sub-H-group and a quotient of an H-group in order to provide preliminaries to begin a basic theory for H-groups similar to elementary group theory. We develop the theory in section 4 by introducing the kernel of an H-homomorphisms in order to give H-isomorphism theorems, and the concept of the center and inner H-automorphism of an H-group and their relationship. In section 5, we give a topology to a quotient of H-groups which makes them quasitopological groups. We also study path component space of H-groups and give a necessary and sufficient condition for significance of 0-semilocally simply connectedness. Finally in section 6, we provide some applications of the basic theory of H-groups to topological homotopy groups. More precisely, among reproving some of the known results, by using advantages of section 5, we give some new results for discreteness and indiscreteness of $\pi_n^{top}(X,x)$, for $n\geq1$.
Also, we find out a family of spaces by using n-Hawaiian like spaces, introduced in \cite{G2}, such that their topological fundamental groups are indiscrete topological groups.

\section{Notations and preliminaries}
 An H-group consists of a pointed topological space $(P,p_0)$ together with
continuous pointed maps $\mu: P\times P\longrightarrow
P$, $\eta:P\longrightarrow P $ and the constant map
$c:P\longrightarrow P$, for which
$\mu(1_P,c)\simeq \mu(c,1_P)\simeq 1_P,\ \mu(\eta,1_P)\simeq
\mu(1_P,\eta)\simeq c$ and $\mu(\mu,1_P)\simeq\mu(1_P,\mu)$
(all of maps and homotopies are pointed ). The maps $\mu$, $\eta$ and $c$ are called multiplication, homotopy inverse and homotopy identity, respectively. For example every topological group and every loop space is an H-group. $P$ is called an Abelian H-group if $\mu\circ T\simeq\mu$, where $T:P\lo P$ by $T(x,y)=(y,x)$.

\begin{definition}
(\cite[XIX, 3]{D}). A continuous map $\varphi:P\longrightarrow P'$ for $(P,\mu,\eta,c)$ and $(P',\mu',\eta',c')$ as H-groups, is called an {\it H-homomorphism} whenever $\varphi\mu\simeq\mu'(\varphi\times\varphi)$ and $\varphi\eta\simeq\eta'\varphi$. Also, $\vf$ is called an {\it H-isomorphism} if there exists an H-homomorphism $\psi:P'\lo P$ such that $\vf\circ\psi\simeq1_{P'}$ and $\psi\circ\vf\simeq1_{P}$; in this event, the H-structures are called H-isomorphic.
\end{definition}
\begin{example}
(\cite[XIX, 3]{D}). Let $x,y\in X$, and let $\al$ be any path from $x$ to $y$. The map $\al^+:\Omega(X,x)\lo\Omega(X,y)$ by setting $\al^+(\beta)=\al^{-1}*(\beta*\al)$ is an H-isomorphism by $(\al^{-1})^+:\Omega(X,y)\lo\Omega(X,x)$ as inverse. Also, for each continuous map $f:(X,x)\lo (Y,y)$, $\Omega f:\Omega(X,x)\lo \Omega(Y,y)$ by $(\Omega f)(\al)=f\circ\al$ is an H-homomorphism and if $f$ is a homotopy equivalence, $\Omega f$ is an H-isomorphism.
\end{example}
\begin{proposition}(\cite[Theorem 7.2]{D}). If $P$ is an H-group, then $\pi_0(P)$ is a group with the equivalence class of $p_0$ as the identity. Also, for any H-homomorphism $\vf:P\lo P'$, $\pi_0(\vf):\pi_0(P)\lo\pi_0(P')$ is a group homomorphism.
\end{proposition}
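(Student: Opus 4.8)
The plan is to exploit the fact that $\pi_0$ is a homotopy-invariant functor that carries finite products to products, so that the H-group structure maps descend to honest group-theoretic operations on $\pi_0(P)$. The whole argument is essentially a mechanical transfer of the homotopy-commutative diagrams defining an H-group into strictly commutative diagrams after applying $\pi_0$.

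First I would record the two facts that drive everything. Since the path components of a product are the products of the path components, there is a natural bijection $\pi_0(P\times P)\cong\pi_0(P)\times\pi_0(P)$, under which $\pi_0$ applied to a map $(f,g):Q\lo P\times P$ becomes $(\pi_0(f),\pi_0(g))$. Secondly, $\pi_0$ is homotopy invariant: if $f\simeq g$ then $\pi_0(f)=\pi_0(g)$, because $\pi_0(f)$ sends the component of $x$ to the component of $f(x)$, and a homotopy $H$ from $f$ to $g$ provides, for each $x$, a path $t\mapsto H(x,t)$ from $f(x)$ to $g(x)$, so the two target components coincide.

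Next I would define the operations. Set $[x]\cdot[y]:=[\mu(x,y)]$, $[x]^{-1}:=[\eta(x)]$, and take $[p_0]$ as the proposed identity; equivalently these arise from $\pi_0(\mu)$, $\pi_0(\eta)$, $\pi_0(c)$ via the product identification above. Well-definedness of the product is the one point genuinely needing an argument: if $\gamma$ is a path from $x$ to $x'$ and $\delta$ a path from $y$ to $y'$, then $t\mapsto\mu(\gamma(t),\delta(t))$ is a path from $\mu(x,y)$ to $\mu(x',y')$, so $[\mu(x,y)]$ depends only on $[x]$ and $[y]$.

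Then the group axioms fall out by applying $\pi_0$ to the defining homotopies of the H-group. From $\mu(1_P,c)\simeq\mu(c,1_P)\simeq1_P$, homotopy invariance gives $[x]\cdot[p_0]=[x]=[p_0]\cdot[x]$, so $[p_0]$ is a two-sided identity; from $\mu(\eta,1_P)\simeq\mu(1_P,\eta)\simeq c$ we get $[\eta(x)]\cdot[x]=[p_0]=[x]\cdot[\eta(x)]$; and from the associativity homotopy $\mu(\mu,1_P)\simeq\mu(1_P,\mu)$ we get $([x]\cdot[y])\cdot[z]=[x]\cdot([y]\cdot[z])$, using the product identification on $\pi_0(P\times P\times P)$. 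This proves $\pi_0(P)$ is a group. Finally, for an H-homomorphism $\vf$, applying $\pi_0$ to $\vf\mu\simeq\mu'(\vf\times\vf)$ together with homotopy invariance and product-preservation yields $\pi_0(\vf)([x]\cdot[y])=\pi_0(\vf)([x])\cdot\pi_0(\vf)([y])$, so $\pi_0(\vf)$ is a group homomorphism. The main (indeed only real) obstacle is bookkeeping: one must consistently use that $\pi_0$ turns each homotopy-commuting square defining the H-group into a strictly commuting square, which is exactly where homotopy invariance and product-preservation of $\pi_0$ are indispensable.
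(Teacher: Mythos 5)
Your argument is correct and complete: the paper itself offers no proof of this proposition, citing it to Dugundji (Theorem 7.2 of \cite{D}), and your proof is precisely the standard one found there --- namely that $\pi_0$ is a homotopy-invariant, product-preserving functor, so the homotopy-commutative diagrams defining the H-group structure on $P$ become strictly commutative diagrams of sets exhibiting a group structure on $\pi_0(P)$, and the H-homomorphism condition $\vf\mu\simeq\mu'(\vf\times\vf)$ likewise descends to the homomorphism identity. The one step genuinely requiring a pointwise check, well-definedness of $[x]\cdot[y]=[\mu(x,y)]$ via the path $t\mapsto\mu(\gamma(t),\delta(t))$, is handled correctly.
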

\begin{definition}
(\cite[Definition 3.1]{KP}). A pointed subspace $P'$ of an H-group $(P,\mu,\eta,c)$ is called a sub-H-group of P if $P'$ is itself an
H-group such that the inclusion map $i:P'\lo P$ is an H-homomorphism.
\end{definition}
\begin{example}
Given a pointed space $(Y,y_0)$ with $(Y',y_0)$ as a pointed
subspace. Then the loop space $\Omega (Y',y_0)$ is a
sub-H-group of the loop space $\Omega (Y,y_0)$.
\end{example}
\begin{theorem}(\cite[Proposition 3.8]{KP}). If $P'$ is a sub-H-group of an H-group $(P,\mu,\eta,c)$, then\\
(i) There exists a continuous multiplication $\mu':P'\times
P'\longrightarrow P' $ such that $i\mu'\simeq \mu(i\times i)$; \\
(ii) For the constant map $c':P'\longrightarrow P'$ we have $ic'=ci$; \\
(iii) There exists a continuous map $\eta' :P'\longrightarrow P'$ such
that $i\eta'\simeq\eta i$.
\end{theorem}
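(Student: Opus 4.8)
The plan is to obtain all three statements by directly unwinding the hypothesis that $P'$ is a sub-H-group. By the definition of sub-H-group (Definition 2.4), the space $P'$ is itself an H-group, so it comes equipped with an H-structure: a continuous pointed multiplication $\mu':P'\times P'\lo P'$, a continuous homotopy inverse $\eta':P'\lo P'$, and the constant map $c':P'\lo P'$. Thus the maps $\mu'$ and $\eta'$ appearing in the statement are precisely these given structure maps; nothing needs to be constructed from scratch, and the content of (i) and (iii) is only that they are compatible with $\mu$ and $\eta$ up to homotopy.

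For (i) and (iii), I would invoke that the inclusion $i:P'\lo P$ is an H-homomorphism. Reading the definition of H-homomorphism (Definition 2.1) with source $(P',\mu',\eta',c')$ and target $(P,\mu,\eta,c)$, the defining conditions read $i\mu'\simeq\mu(i\times i)$ and $i\eta'\simeq\eta i$. These are exactly (i) and (iii), so both follow at once. The only subtlety worth flagging is the direction of $i$: here $P'$ is the source and $P$ the target, so the roles of the two H-structures in Definition 2.1 are as just indicated, and the homotopy relations come out in the stated form rather than their reverses.

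For (ii), I would argue by a direct pointwise computation, observing that strict equality (not merely homotopy) holds. Since $P'$ is a pointed subspace of $P$, the two spaces share the basepoint $p_0$, and both constant maps take the value $p_0$. Hence for every $x\in P'$ we have $(ic')(x)=i(c'(x))=i(p_0)=p_0$ and $(ci)(x)=c(i(x))=p_0$, so that $ic'=ci$ as maps.

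I do not expect any genuine obstacle here: the proposition is essentially a definitional consequence of being a sub-H-group. The one place that demands care is applying the H-homomorphism definition in the correct direction for the inclusion, and noting that (ii) is an honest equality arising because both composites are constant at the common basepoint, so no homotopy is needed.
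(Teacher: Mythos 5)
Your proposal is correct: (i) and (iii) are exactly the two defining conditions of Definition 2.1 applied to the inclusion $i:P'\lo P$ (with $P'$ as source), and (ii) holds as a strict equality because both composites are the constant map at the shared basepoint. The paper gives no proof of this statement, citing it from Park's paper, and your definitional unwinding is the natural argument; note only that under Definition 2.4 the maps $\mu'$ and $\eta'$ are indeed part of the given H-structure on $P'$, so the ``there exists'' phrasing carries no extra content here.
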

Let $hTop^{*}$ be the category of pointed topological spaces with
homotopy class of pointed maps as morphism. Therefore a map
$f:(X,x_0)\longrightarrow(Y,y_0)$ is monic if and only if the only
pairs $g_1,g_2:(Z,z_0)\longrightarrow(X,x_0) \ $such that
$fg_1\simeq fg_2 $ are the homotopic ones: $g_1\simeq g_2 $. Also a map $f':(X,x_0)\longrightarrow(Y,y_0)$ is epic if and only if
the only pairs $h_1,h_2:(Y,y_0)\longrightarrow(Z',z'_0) \ $such that
$h_1f'\simeq h_2f' $ are the homotopic ones: $h_1\simeq h_2 $.
\begin{theorem}(\cite[Proposition 3.9]{KP}). Let $P'$ be a pointed subspace of an H-group $(P,\mu,\eta,c)$. Suppose that the
statements (i), (ii) and (iii) given in Theorem 2.6 are satisfied and the inclusion map $i: P'\lo P$
is a monic. Then $P'$ is a sub-H-group of $P$.

\end{theorem}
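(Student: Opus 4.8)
The goal is to verify Definition~2.4, namely that $(P',\mu',\eta',c')$ is itself an H-group and that $i$ is an H-homomorphism. The second requirement is immediate from the hypotheses: the defining conditions $i\mu'\simeq\mu(i\times i)$ and $i\eta'\simeq\eta i$ for $i$ to be an H-homomorphism are precisely (i) and (iii) of Theorem~2.6, with (ii) recording that $ic'=ci$. Hence all the content lies in checking that $\mu'$, $\eta'$, $c'$ satisfy the three H-group axioms, and the single tool that makes this possible is the monicity of $i$: to prove $f\simeq g$ for pointed maps $f,g$ into $P'$, it suffices to establish $if\simeq ig$.

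My plan is therefore to treat each axiom by the same three-move pattern. First I would compose on the left with $i$; then push $i$ through $\mu'$, $\eta'$, $c'$ by means of (i)--(iii) until the expression is built only from $\mu$, $\eta$, $c$, $1_P$ precomposed with a power of $i$; then invoke the corresponding H-group axiom for $P$; and finally cancel the leading $i$ using that $i$ is monic. For the identity axiom this gives
\[
i\circ\mu'(1_{P'},c')\simeq\mu(i\times i)(1_{P'},c')=\mu(i,ic')=\mu(i,ci)=\mu(1_P,c)\circ i\simeq 1_P\circ i=i\circ 1_{P'},
\]
using (i) in the first step, (ii) in the middle, the factorisation $(i,ci)=(1_P,c)\circ i$, and the axiom $\mu(1_P,c)\simeq 1_P$ for $P$; monicity then yields $\mu'(1_{P'},c')\simeq 1_{P'}$, and the symmetric computation gives $\mu'(c',1_{P'})\simeq 1_{P'}$. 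The inverse axiom runs identically: $i\circ\mu'(\eta',1_{P'})\simeq\mu(i\eta',i)\simeq\mu(\eta i,i)=\mu(\eta,1_P)\circ i\simeq c\circ i=ci=ic'=i\circ c'$, using (iii) for the inner homotopy and $\mu(\eta,1_P)\simeq c$ for $P$, whence $\mu'(\eta',1_{P'})\simeq c'$ after cancelling $i$.

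The associativity axiom is where I expect the real bookkeeping, and it is the step I would flag as the main obstacle. Here the maps live on the triple product $P'\times P'\times P'$, so pushing $i$ through requires applying (i) twice while keeping careful track of product maps: I would establish $i\circ\mu'\circ(\mu'\times 1_{P'})\simeq\mu(\mu,1_P)\circ(i\times i\times i)$ and, symmetrically, $i\circ\mu'\circ(1_{P'}\times\mu')\simeq\mu(1_P,\mu)\circ(i\times i\times i)$. The only genuine care lies in the identity $(i\mu')\times i=(\mu\times 1_P)\circ(i\times i\times i)$ together with the fact that a homotopy remains a homotopy after taking products with a fixed map. Since $\mu(\mu,1_P)\simeq\mu(1_P,\mu)$ holds in $P$, the two left-hand sides agree up to homotopy; and because $P'\times P'\times P'$ is a pointed space, the monicity of $i$ applies to the two resulting maps into $P'$ and delivers $\mu'\circ(\mu'\times 1_{P'})\simeq\mu'\circ(1_{P'}\times\mu')$. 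This completes the verification that $P'$ is an H-group, and with the H-homomorphism property of $i$ already in hand, that $P'$ is a sub-H-group of $P$.
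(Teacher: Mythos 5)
The paper states this result as a citation to \cite[Proposition 3.9]{KP} and gives no proof of its own, so there is nothing internal to compare against; judged on its own terms, your argument is correct and is exactly the standard one this kind of statement admits. Your three-move pattern (compose with $i$, push $i$ through $\mu'$, $\eta'$, $c'$ via (i)--(iii) and the identities $(i\times i)\circ(1_{P'},c')=(1_P,c)\circ i$ and $(i\mu')\times i\simeq(\mu\times 1_P)\circ(i\times i\times i)$, apply the H-group axiom of $P$, cancel $i$ by monicity applied to pointed maps out of $P'$, $P'\times P'$, or $P'\times P'\times P'$) verifies all three axioms, and the H-homomorphism property of $i$ is indeed just (i) and (iii) restated.
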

\section{ Quotient H-groups}
 In this section we assume that $(P,\mu,\eta,c) $ is an H-group and $(P',\mu',\eta',c')$ is a sub-H-group of $P$.
 Also from now on we use $g^{-1}$ and $g_1.g_2$ instead of $\eta(g)$ and $\mu(g_1,g_2)$, respectively, for abbreviation of notation.
\begin{definition}
For each $g\in P$ we say that $g$ homotopically belongs to $P'$, denoted by $g\widetilde{\in}P' $ if and only if there exists a path
$\alpha:I\longrightarrow P $ such that $\alpha(0)=g$ and
$\alpha(1)\in P'$.
 \end{definition}
 \begin{definition}
Let $P'$ be a sub-H-group of $P$ and $g\in P$. Then we define the right coset of
$P'$ with representative $g$ as follows: $$P'g=\{ g'\in P\ |\ g'.g^{-1}\hin P' \}.$$
Similarly, the left coset of $P'$ with representative $g$ is defined as follows: $$gP'=\{ g'\in
P\ |\ g^{-1}.g'\widetilde{\in}P' \}.$$
 \end{definition}
Notation: If $\al$ is a path in an H-group $P$ and $g\in P$, then by $g\al$ we mean the path $g\al:I\lo P$ given by $g\al(t)=g.\al(t)$.
 \begin{lemma}
 For every sub-H-group $P'$ of P the following statements hold:\\
(i) For each $g\ \in P$, $g\in gP'$.\\
(ii) If $g_1,g_2 \ \widetilde{\in}\ P'$, then $g_1^{-1}\
\widetilde{\in}P'$ and  $g_1.g_2\ \widetilde{\in}P'$.\\
(iii) $g_2^{-1}.g_1\hin P'$  if and only if $g_1P'=g_2P'$.\\
(iv) If $g_1$ and $g_2$ are in the same path component of $P$, then
$g_1P'=g_2P'$.
\end{lemma}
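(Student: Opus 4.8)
The plan is to reduce every assertion to elementary coset arithmetic inside the \emph{group} $\pi_0(P)$, exploiting Proposition 2.3. Write $q:P\lo\pi_0(P)$ for the projection onto path components and $[g]=q(g)$. By Proposition 2.3, $\pi_0(P)$ is a group; unwinding Dold's construction, its operation is $[g_1][g_2]=[g_1.g_2]$, its identity is $[p_0]$, and the inverse is $[g]^{-1}=[g^{-1}]$ (this last reading off the axiom $\mu(\eta,1_P)\simeq c$). Since the inclusion $i:P'\lo P$ is an H-homomorphism, Proposition 2.3 also makes $\pi_0(i):\pi_0(P')\lo\pi_0(P)$ a group homomorphism; I set $H:=\operatorname{im}\pi_0(i)$, a subgroup of $\pi_0(P)$.

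The one step I would isolate as a preliminary lemma is the translation
\[
g\hin P'\quad\Longleftrightarrow\quad [g]\in H .
\]
Indeed, $g\hin P'$ says precisely that $g$ lies in the same path component of $P$ as some $a\in P'$, i.e. $[g]=[a]=\pi_0(i)([a])\in H$; conversely every class in $H$ is of the form $[a]$ with $a\in P'$, and $[g]=[a]$ yields a path in $P$ from $g$ to $a\in P'$. With this dictionary the defining conditions of the cosets rewrite as $gP'=\{g'\mid [g]^{-1}[g']\in H\}=q^{-1}([g]H)$, the full $q$-preimage of the ordinary left coset $[g]H\subseteq\pi_0(P)$.

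Granting the translation, all four parts are immediate group theory. For (i), $g\in gP'$ amounts to $g^{-1}.g\hin P'$, i.e. $[g]^{-1}[g]=[p_0]\in H$, which holds since $H$ is a subgroup. For (ii), if $[g_1],[g_2]\in H$ then $[g_1^{-1}]=[g_1]^{-1}\in H$ and $[g_1.g_2]=[g_1][g_2]\in H$ by closure. For (iii), $g_2^{-1}.g_1\hin P'$ says $[g_2]^{-1}[g_1]\in H$, equivalently $[g_1]H=[g_2]H$; as $q$ is surjective, the assignment $C\mapsto q^{-1}(C)$ is injective on cosets, so $[g_1]H=[g_2]H$ holds iff $q^{-1}([g_1]H)=q^{-1}([g_2]H)$, that is iff $g_1P'=g_2P'$. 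For (iv), if $g_1,g_2$ share a path component then $[g_1]=[g_2]$, whence $[g_1]H=[g_2]H$ and (iii) gives $g_1P'=g_2P'$. The only real work therefore sits in the translation lemma, and inside it the point demanding care is confirming that the group structure furnished by Proposition 2.3 is genuinely the one induced by $\mu$ and $\eta$, so that $[g_1.g_2]=[g_1][g_2]$ and $[g^{-1}]=[g]^{-1}$; this is exactly where the H-group axioms (homotopy associativity, the two unit homotopies, and $\mu(\eta,1_P)\simeq c$) enter, each homotopy descending to an identity among path components. Should one instead argue directly with paths by splicing together the axiom homotopies, the main obstacle becomes the reverse inclusion in (iii): one must re-associate $(g_2^{-1}.g_1).(g_1^{-1}.g')$ to $g_2^{-1}.g'$ using homotopy associativity and the unit/inverse axioms, and separately establish the symmetry of the relation via $(x.y)^{-1}\simeq y^{-1}.x^{-1}$. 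The $\pi_0$ route sidesteps all of this bookkeeping.
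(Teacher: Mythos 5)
Your proof is correct, but it takes a genuinely different route from the paper's. The paper argues directly with paths: for (i) it splices the homotopy $\mu(\eta,1_P)\simeq c$ into a path from $g^{-1}.g$ to $p_0$; for (ii) it pushes a path $\alpha$ into $P'$ forward along $\eta$ and then uses the sub-H-group compatibility $i\eta'\simeq \eta i$ to drag the endpoint back into $P'$; (iii) and (iv) are then assembled from (i), (ii) and homotopy associativity, with (iv) using the translated path $g_1^{-1}\alpha$. You instead collapse everything onto $\pi_0(P)$: the translation $g\hin P'\Leftrightarrow [g]\in H$ with $H=\operatorname{im}\pi_0(i)$ is exactly right (and correctly uses the image of $\pi_0(i)$ rather than $\pi_0(P')$ itself, which matters since $\pi_0(i)$ need not be injective --- compare Example 3.10), the identification $gP'=q^{-1}([g]H)$ is valid, and all four parts then reduce to ordinary coset algebra for the subgroup $H$. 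What your route buys is economy and a structural insight the paper only reaches later (Proposition 3.9, Theorem 3.12, Corollary 3.13 develop precisely this $\pi_0$-picture, but only for saturated sub-H-groups); what it costs is that the entire burden is shifted onto Proposition 2.3 --- that $\mu$ and $\eta$ descend to the group operation and inversion on $\pi_0(P)$ and that $\pi_0(i)$ is a homomorphism --- which is exactly the path-splicing the paper performs by hand, and which your writeup correctly flags as the one point demanding care. The paper's explicit construction also has the side benefit of exhibiting the path-translation technique ($g\alpha$) that is reused in Lemmas 3.7 and elsewhere.
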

\begin{proof}
(i) Let $p_0$ be the common base point of $P$ and $P'$, then by $\mu(\eta,1)\simeq\ c$ there exists a path from
$g^{-1}.g$ into $p_0$. Hence $g\in gP'$.\\
(ii) Since $g_1\widetilde{\in}\ P'$, there exists a path
$\alpha$ from $g_1$ into $P'$, which implies $\eta \circ \alpha$ is a path
from $\eta(g_1)$ to $\eta (\al(1))\in \eta(P')$. Put $g'=\al(1)\in P'$, then $\eta
\alpha(1)=\eta (g')=\eta i(g')$. Since $i\eta'\simeq \eta i$, there exists a path $\beta$ from $\eta i(g')$ to
$i \eta'(g')$. Thus $(\eta\alpha)\ast\beta$ is a path from
$\eta(g_1)$ to $i\eta'(g')$ and since $\eta'(g')\in P'$, we have
$g_1^{-1}\ \widetilde{\in}\ P'$. By a similar method it can be shown that $g_1.g_2\
\widetilde{\in}P'$.\\
(iii) If $g_1P'=g_2P'$, then by (i) $g_1\in g_2P'$. Thus
 $g_2^{-1}.g_1\widetilde{\in}\ P'$. Conversely, let $g'\in g_1P'$, then $g_1^{-1}.g'\widetilde{\in}\ P'$. By (ii) we
 have $(g_2^{-1}.g_1).(g_1^{-1}.g')\widetilde{\in}\ P'$. Associativity of $\mu$ gives $g_2^{-1}.g'\widetilde{\in}\
 P'$ which implies $g'\in g_2P'$ and $g_1P'\subseteq g_2P'$. Similarly $g_2P'\subseteq
 g_1P'$.\\
(iv) If $\alpha $ is a path from $g_1$ to $g_2$, then
$g_1^{-1}\alpha$ is a path from $g_1^{-1}.g_1$ to
$g_1^{-1}.g_2$. In the proof of (i) we showed that
$g_1^{-1}.g_1$ is connected to $p_0$ by a path. Hence $g_1^{-1}.g_2$ is connected to $p_0$ by a path which implies
$g_1^{-1}.g_2\widetilde{\in}P'$.
\end{proof}
\begin{example}
Consider loops $\al_1,\al_2,\al_3,\al_4$ in $R^2$ as in Figure (1). If $X=\bigcup_{i=1}^4 Im(\al_i)$ and $Y=Im(\al_1)\bigcup Im(\al_3)$, then $\Omega Y$ is a sub-H-group of $\Omega X$ and $\al_1,\al_3\in \Omega Y$. Hence $\al_1\Omega Y=\al_3\Omega Y$, but $\al_1$ is not homotopic to $\al_3$. This shows that the converse of (iv) in the above theorem does not hold.

\begin{figure}
 \includegraphics[scale=0.5]{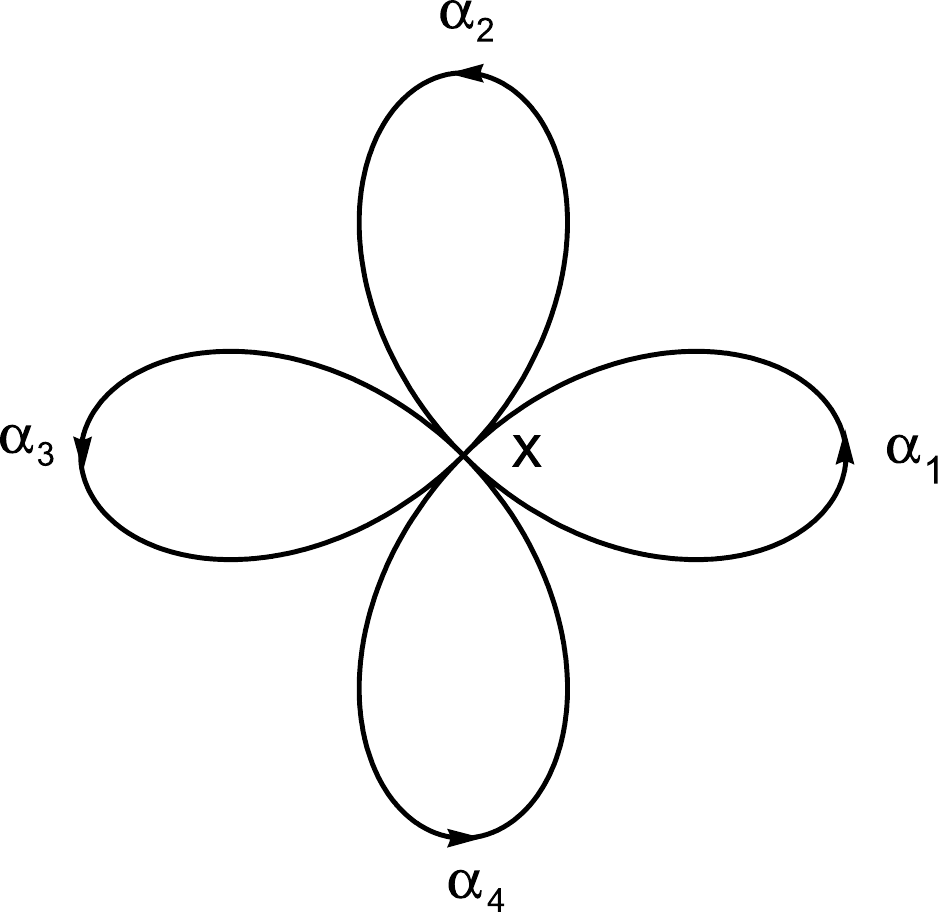}
  \caption{}\label{1}
\end{figure}
\end{example}
\begin{proposition}For each sub-H-group $P'$ of $P$, the relation $\stackrel{P'}{\sim}$ on $P$ defined by $$g_1\stackrel{P'}{\sim}g_2 \Leftrightarrow g_1.g_2^{-1}\hin P'$$
is an equivalence relation in which $g_1P'$ is the equivalence class of $g_1$.
\end{proposition}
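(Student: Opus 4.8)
The plan is to transport the whole question into the path-component group $\pi_0(P)$, where the statement collapses to the textbook fact that the cosets of a subgroup partition a group. Write $q:P\lo\pi_0(P)$ for the map sending a point to its path component. By Proposition 2.3 the set $\pi_0(P)$ is a group with product $[a][b]=[a.b]$ and inverse $[a]^{-1}=[a^{-1}]$, so that $q(g_1.g_2^{-1})=q(g_1)q(g_2)^{-1}$; moreover $\pi_0(i):\pi_0(P')\lo\pi_0(P)$ is a group homomorphism, whence its image $H$ is a subgroup of $\pi_0(P)$. The one observation linking the two worlds is a translation lemma: for $g\in P$ one has $g\hin P'$ if and only if $q(g)\in H$. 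Indeed, a path from $g$ into $P'$ says precisely that $q(g)=q(g')$ for some $g'\in P'$, i.e. $q(g)=\pi_0(i)[g']\in H$; conversely $q(g)\in H$ produces a $g'\in P'$ lying in the same component of $P$ as $g$, and any path realizing this witnesses $g\hin P'$.

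Granting this, I would finish as follows. Using $q(g_1.g_2^{-1})=q(g_1)q(g_2)^{-1}$ and the translation lemma, the defining condition reads $g_1\stackrel{P'}{\sim}g_2\iff q(g_1)q(g_2)^{-1}\in H$, which is exactly the assertion that $q(g_1)$ and $q(g_2)$ lie in one coset of $H\le\pi_0(P)$. Since that is the canonical equivalence relation attached to a subgroup of a group, reflexivity, symmetry and transitivity for $\stackrel{P'}{\sim}$ follow by pulling the corresponding identities back along $q$ (from $q(g)q(g)^{-1}=e\in H$ for reflexivity, and from closure of $H$ under inverses and products for the other two). If one prefers to argue inside $P$, reflexivity comes from $\mu(1_P,\eta)\simeq c$, which supplies a path from $g.g^{-1}$ to $p_0\in P'$, while symmetry and transitivity come from Lemma 3.3(ii) together with associativity of $\mu$. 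The advantage of the $\pi_0$ route is that the bookkeeping with inverses and products---in particular the fact that passing to the inverse reverses a product only up to homotopy---is rendered automatic by the genuine group law on $\pi_0(P)$, rather than requiring one to establish $\eta(a.b)$ is path-connected to $\eta(b).\eta(a)$ by hand.

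Finally, for the equivalence class: the class of $g_1$ is $q^{-1}$ of the coset of $q(g_1)$, and unwinding the definition it equals $\{g\in P:\ g_1.g^{-1}\hin P'\}$, with Lemma 3.3(iii) certifying that this is a single, well-defined coset of $P'$ through $g_1$ in the sense of Definition 3.2. I expect the one genuinely delicate point to be the side on which $P'$ sits---i.e. reading the relation $g_1.g_2^{-1}\hin P'$ off as the correct one-sided coset of $g_1$---and it is precisely here that Lemma 3.3(iii) and the group structure of $\pi_0(P)$ must be invoked attentively rather than by analogy with the commutative case.
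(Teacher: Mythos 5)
Your proof of the equivalence-relation part is correct and takes a genuinely different route from the paper's. The paper argues entirely inside $P$, citing Lemma 3.3: reflexivity is immediate from $\mu(\eta,1_P)\simeq c$, symmetry and the identification of the class come from 3.3(iii) and 3.3(i), and transitivity from 3.3(ii). Your reduction to the subgroup $H=\pi_0(i)(\pi_0(P'))\leq\pi_0(P)$ through the translation lemma $g\hin P'\Leftrightarrow q(g)\in H$ is sound, and it buys a real simplification: symmetry of the relation requires knowing that $\eta(g_1.g_2^{-1})$ lies in the same path component as $g_2.g_1^{-1}$, and while a map-level homotopy $\eta\circ\mu\simeq\mu\circ(\eta\times\eta)\circ T$ is not available for general H-groups (the authors say so explicitly in an unnumbered remark in Section 4), the identity $([a][b])^{-1}=[b]^{-1}[a]^{-1}$ in the honest group $\pi_0(P)$ gives this for free. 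So your bookkeeping point is not merely cosmetic.

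The delicate point you flagged at the end is real, and you should resolve it rather than leave it open. Under the relation as printed, $g_1\stackrel{P'}{\sim}g_2\Leftrightarrow g_1.g_2^{-1}\hin P'$, the equivalence class of $g_1$ is $\{g\in P\ |\ g.g_1^{-1}\hin P'\}$, which by Definition 3.2 is literally the right coset $P'g_1=q^{-1}(Hq(g_1))$, not the left coset $g_1P'=q^{-1}(q(g_1)H)$ asserted in the statement; these differ whenever $H$ fails to be normal in $\pi_0(P)$, which certainly occurs since $\pi_0$ of a loop space can be, e.g., a free product of two copies of $\Z$ as in Example 3.10. Lemma 3.3(iii), which you invoke to finish, pairs the left coset $g_1P'$ with the other one-sided condition $g_2^{-1}.g_1\hin P'$, so it does not certify the identification you need. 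Either read the relation as $g_2^{-1}.g_1\hin P'$ --- which is what the paper's own citation of Lemma 3.3(i) and (iii) presupposes --- and conclude the class is $g_1P'$, or keep the relation as printed and conclude the class is $P'g_1$. Everything else in your argument goes through verbatim in either reading.
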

\begin{proof}
Trivially $g\stackrel{P'}{\sim}g$ for each $g\in P$. If $g_1\stackrel{P'}{\sim}g_2$, then by Lemma 3.3 (iii) $g_2\stackrel{P'}{\sim}g_1$. Transitivity comes from definition and Lemma 3.3 (ii). The last assertion follows from Lemma 3.3 (i) and (iii).
\end{proof}
Note that by the above results the set of all left (right) cosets of $P'$ is a partition for $P$.
\begin{definition}
For a topological space $X$, we call a subset $A$ of $X$ saturated if for each $x\in A$ the path component of $X$ which contains $x$ is a subset of $A$. If $A\sub X$ is not saturated, then it's saturation in $X$ is defined as $\wt{A}=\{x\in X\ |\ x\hin A\}$.
\end{definition}
\begin{lemma} If $P'$ is a saturated sub-H-group of $P$, then for any $g\in P$ we have $$|\pi_0(P')|=|\pi_0(gP')|.$$
\end{lemma}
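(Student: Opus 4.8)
The plan is to produce an explicit bijection $\pi_0(P')\lo\pi_0(gP')$ induced by left translation by $g$. First I would record the elementary fact underlying the hypothesis: for a saturated subspace $A$ of a space $X$ the path components of $A$ coincide with those path components of $X$ contained in $A$. Indeed, for $x\in A$ every path of $X$ issuing from $x$ has image in the path component of $x$, which by saturation lies in $A$; hence the path component of $x$ computed in $A$ and in $X$ agree, and the inclusion $A\hookrightarrow X$ identifies $\pi_0(A)$ with a subset of $\pi_0(X)$. I would also check that $gP'$ is itself saturated: if $g'\in gP'$ and $\gamma$ is a path in $P$ from $g'$ to $g''$, then $g^{-1}\gamma$ (the path $t\mapsto g^{-1}.\gamma(t)$, in the notation fixed after Definition 3.2) runs from $g^{-1}.g'$ to $g^{-1}.g''$, so $g^{-1}.g'\hin P'$ forces $g^{-1}.g''\hin P'$, i.e. $g''\in gP'$.

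Next I would introduce the continuous self-maps $L_g,L_{g^{-1}}:P\lo P$ of left multiplication, $L_g(x)=g.x$, and verify the containments $L_g(P')\sub gP'$ and $L_{g^{-1}}(gP')\sub P'$. The second is immediate: $g'\in gP'$ means $g^{-1}.g'\hin P'$, and saturation of $P'$ upgrades this to $g^{-1}.g'\in P'$. For the first, and simultaneously for the homotopy computation below, the key device is to construct, for each $x\in P'$, a path in $P$ from $g^{-1}.(g.x)$ to $x$: homotopy associativity $\mu(\mu,1)\simeq\mu(1,\mu)$ furnishes a path from $g^{-1}.(g.x)$ to $(g^{-1}.g).x$; the axiom $\mu(\eta,1)\simeq c$ gives a path $\delta$ from $g^{-1}.g$ to $p_0$, so $t\mapsto\delta(t).x$ joins $(g^{-1}.g).x$ to $p_0.x$; and $\mu(c,1)\simeq 1$ joins $p_0.x$ to $x$. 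Concatenating these shows $g^{-1}.(g.x)\hin P'$, that is $g.x\in gP'$.

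I would then restrict to continuous maps $\rho=L_g|_{P'}:P'\lo gP'$ and $\sigma=L_{g^{-1}}|_{gP'}:gP'\lo P'$ and pass to the induced maps $\pi_0(\rho),\pi_0(\sigma)$. For $x\in P'$ the composite value $\sigma\rho(x)=g^{-1}.(g.x)$ is joined to $x$ by the path just built, so it lies in the same path component of $P$ as $x$; since $P'$ is saturated this is the same path component of $P'$, whence $\pi_0(\sigma\rho)=\mathrm{id}_{\pi_0(P')}$. The symmetric computation, using $\mu(1,\eta)\simeq c$ together with the already-established saturation of $gP'$, gives $\pi_0(\rho\sigma)=\mathrm{id}_{\pi_0(gP')}$. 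Therefore $\pi_0(\rho)$ is a bijection with inverse $\pi_0(\sigma)$, and $|\pi_0(P')|=|\pi_0(gP')|$.

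I expect the main obstacle to be twofold. First, bookkeeping the homotopies coming from the H-group axioms into honest paths between the designated points, so that the composite $g^{-1}.(g.x)$ is correctly threaded to $x$ through the intermediate points $(g^{-1}.g).x$ and $p_0.x$ (and the mirror chain for $\rho\sigma$). Second, and more essential, invoking saturation at exactly the right moments to convert ``path-connected in $P$'' into ``in the same path component of $P'$ (respectively $gP'$)''; without the saturation hypothesis these translations need not respect the subspace path components, which is precisely why the hypothesis is present and why the identification of $\pi_0$ of a saturated subspace with a subset of $\pi_0(P)$ must be set up first.
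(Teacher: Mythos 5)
Your proof is correct and follows essentially the same route as the paper's: both use left translation by $g$ and $g^{-1}$ together with the H-group homotopies (associativity, $\mu(\eta,1)\simeq c$, $\mu(c,1)\simeq 1$) to compare path components of $P'$ and $gP'$. The only organizational difference is that you exhibit $\pi_0(L_g)$ and $\pi_0(L_{g^{-1}})$ as mutually inverse bijections, whereas the paper shows each translation sends distinct components to distinct components and combines the two cardinality inequalities; your version also spells out details (well-definedness of $L_g\colon P'\to gP'$ and saturation of $gP'$) that the paper leaves implicit.
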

\begin{proof}We claim that if $g_1,g_2\in P'$ lie in different path
components of $P'$, then so do $g.g_1$ and $g.g_2$, since if there
exists a path $\alpha:I\longrightarrow gP'$ from $g.g_1$ to $g.g_2$,
then $g^{-1}\alpha$ is a path in $P'$ from $g^{-1}.(g.g_1)$ to
$g^{-1}.(g.g_2)$. By associativity of multiplication we have a path
from $g_1$ to $g_2$. Thus $|\pi_0(P')|\leq |\pi_0(gP')|$.\\
Similarly, if $g_1,g_2\in gP'$ do not lie in the same path component,
then $g^{-1}.g_1,g^{-1}.g_2\widetilde{\in}P'$ do not lie in the same
path component of $P'$. Hence $|\pi_0(gP')|\leq |\pi_0(P')|.$
\end{proof}
\begin{xrem}By homotopy associativity of $\mu$, we have $(g_1.g_2).g_3\
\stackrel{P'}{\sim}\ g_1.(g_2.g_3)$ for each sub-H-group $P'$ of $P$ and any $g_1,g_2,g_3\in P$.
\end{xrem}
 For a sub-H-group $P'$ of H-group $P$, there are as many right cosets as left cosets, since the
map $gP'\longrightarrow P'g^{-1}$ is a one-to-one correspondence.
If $[P : P']$, the index of $P'$ in $P$, denotes the cardinal of the set of all
left (or right) cosets of $P'$ in $P$, then we have the following basic result which is analogues to Lagrange theorem in group theory.
\begin{theorem} If $P'$ is a saturated sub-H-group of $P$, then $$|\pi_0(P)|=|\pi_0(P')|.[P;P'],$$
when $|\pi_0(P)|$ is finite.
\end{theorem}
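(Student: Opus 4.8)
The plan is to show that the decomposition of $P$ into left cosets induces, at the level of path components, a partition of $\pi_0(P)$ into equal-sized blocks. First I would recall that by Proposition 3.5 together with the remark immediately following it, the left cosets $gP'$ form a partition of $P$; thus $[P;P']$ is exactly the number of blocks of this partition, and it remains to count how many path components of $P$ sit inside each block.

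The crucial structural step is to verify that every left coset $gP'$ is a saturated subset of $P$, i.e.\ a union of path components of $P$. Indeed, suppose $g'\in gP'$ and $g''$ lies in the same path component of $P$ as $g'$. Then Lemma 3.3 (iv) gives $g'P'=g''P'$, while $g'\in gP'$ forces $g'P'=gP'$ by Lemma 3.3 (i) and (iii); hence $g''\in g''P'=gP'$. So no path component of $P$ is split between two distinct cosets, and each coset is saturated. Consequently, for $x\in gP'$ the path component of $x$ computed in the subspace $gP'$ coincides with the path component of $x$ computed in $P$, since the whole $P$-component already lies inside $gP'$ and cannot be enlarged. Therefore $|\pi_0(gP')|$ equals precisely the number of path components of $P$ contained in $gP'$.

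With this in hand the count is immediate. Because the cosets partition $P$ and each path component of $P$ lies in exactly one coset, $\pi_0(P)$ is the disjoint union, over the $[P;P']$ cosets, of the path components lying in each coset, the block attached to $gP'$ having cardinality $|\pi_0(gP')|$. Here the saturatedness of $P'$ enters through Lemma 3.7, which gives $|\pi_0(gP')|=|\pi_0(P')|$ for every $g\in P$, so all blocks have the common size $|\pi_0(P')|$. Summing over the $[P;P']$ blocks, and using the hypothesis that $|\pi_0(P)|$ is finite to make the arithmetic of cardinals literal, yields $|\pi_0(P)|=|\pi_0(P')|\cdot[P;P']$.

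I expect the main obstacle to be precisely the second step: confirming that each coset is genuinely saturated and, in particular, that the path components of the subspace $gP'$ agree with the ambient path components of $P$ that it contains. This identification is exactly what allows $\pi_0$ to be counted additively across the coset partition; without it one could only compare cardinalities coset-by-coset (as Lemma 3.7 does) and could not assemble them into a statement about $\pi_0(P)$ as a whole.
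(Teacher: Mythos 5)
Your proof is correct and follows essentially the same route as the paper's: partition $P$ into its $[P;P']$ left cosets and use Lemma 3.7 to see that each contributes $|\pi_0(P')|$ path components. Your verification that each coset is saturated (via Lemma 3.3 (i), (iii), (iv)) is a cleaner and more complete version of the step the paper dismisses as ``easy to check,'' namely that distinct cosets contribute disjoint families of path components of $P$.
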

\begin{proof}There are $[P : P']$ cosets, each of which with $|\pi_0(P')|$ members by Lemma 3.7. It is easy to check that
if left cosets $g_1P'$ and $g_2P'$ are different, then for each $g'\in
P'$, $g_1.g'$ and $g_2.g'$ are not in the same path component.
\end{proof}
\begin{proposition}If $P'$ is a saturated sub-H-group of $P$, then $\pi_0(P')$ is a subgroup of $\pi_0(P)$.
\end{proposition}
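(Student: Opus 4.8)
The plan is to realize $\pi_0(P')$ as the image of the homomorphism induced by the inclusion and then upgrade this to an honest subgroup identification. First I would recall that $P'$, being a sub-H-group, is itself an H-group, so Proposition 2.3 makes $\pi_0(P')$ a group, and the inclusion $i:P'\lo P$ is an H-homomorphism: the two compatibilities $i\mu'\simeq\mu(i\times i)$ and $i\eta'\simeq\eta i$ required by Definition 2.1 are exactly items (i) and (iii) of Theorem 2.6. Applying the second half of Proposition 2.3 to $i$ then yields at once that $\pi_0(i):\pi_0(P')\lo\pi_0(P)$ is a group homomorphism, where each side carries the group structure coming from its own H-group multiplication. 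Since the image of a group homomorphism is always a subgroup, the entire proposition reduces to showing that $\pi_0(i)$ is \emph{injective}; once that is done, $\pi_0(i)$ identifies $\pi_0(P')$ with the subgroup $\pi_0(i)\bigl(\pi_0(P')\bigr)\leq\pi_0(P)$.

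The substance of the argument lies in injectivity, and this is precisely where the saturation hypothesis is used. I would argue as follows. Suppose $g_1,g_2\in P'$ have equal images in $\pi_0(P)$, i.e.\ they lie in a single path component of $P$, and choose a path $\alpha:I\lo P$ with $\alpha(0)=g_1$ and $\alpha(1)=g_2$. Each point $\alpha(t)$ is joined to $g_1$ by the subpath $\alpha|_{[0,t]}$, so the whole image $\alpha(I)$ lies inside the path component of $g_1$ in $P$. Because $g_1\in P'$ and $P'$ is saturated (Definition 3.6), that path component is contained in $P'$, whence $\alpha(I)\subseteq P'$. As $P'$ carries the subspace topology, the corestriction $\alpha:I\lo P'$ is again continuous, so $g_1$ and $g_2$ lie in the same path component of $P'$ and therefore represent the same class in $\pi_0(P')$. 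This shows $\ker\pi_0(i)$ is trivial, i.e.\ $\pi_0(i)$ is injective.

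Combining the two steps, $\pi_0(i)$ is an injective group homomorphism, so $\pi_0(P')\cong\pi_0(i)\bigl(\pi_0(P')\bigr)$ is a subgroup of $\pi_0(P)$, as claimed. I expect the only genuinely delicate point to be the injectivity step, and within it the observation that a connecting path in $P$ is \emph{forced} to remain inside $P'$ by saturation, together with the small but necessary check that its corestriction to the subspace $P'$ stays continuous. The homomorphism property is not an obstacle, since it is delivered directly by Proposition 2.3 as soon as the inclusion is recognized as an H-homomorphism.
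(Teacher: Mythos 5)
Your proof is correct, but it takes a different route from the paper's. The paper tacitly identifies $\pi_0(P')$ with the subset $\{[g]\in\pi_0(P)\mid g\in P'\}$ (which is where saturation enters, silently) and then verifies the one-step subgroup criterion directly: $[g_1],[g_2]\in\pi_0(P')$ gives $g_1.g_2^{-1}\hin P'$ by Lemma 3.3(ii), hence $[g_1][g_2]^{-1}=[g_1.g_2^{-1}]\in\pi_0(P')$. You instead invoke functoriality: the inclusion $i:P'\lo P$ is an H-homomorphism by the very definition of sub-H-group, so Proposition 2.3 makes $\pi_0(i)$ a group homomorphism, its image is automatically a subgroup, and the whole burden shifts to injectivity of $\pi_0(i)$, which you derive cleanly from saturation via the corestriction of a connecting path. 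Your version has two advantages: it makes explicit the injectivity that the paper leaves implicit (and which is exactly what fails in Example 3.10, where $\pi_0(\Omega Y)=\Z*\Z$ cannot sit inside the trivial group), and it checks that the intrinsic group structure on $\pi_0(P')$ coming from $\mu'$ agrees with the one inherited from $\pi_0(P)$, a compatibility the paper's closure argument never addresses. The paper's version is shorter and reuses Lemma 3.3, which it has already established for cosets; yours avoids Lemma 3.3 entirely. Both are complete proofs.
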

\begin{proof}
Let $[g_1],[g_2]\in \pi_0(P')$, then $g_1,g_2\hin P'$ gives $g_1.g_2^{-1}\hin P'$ by Lemma 3.3. Hence $[g_1][g_2]^{-1}=[g_1.g_2^{-1}]\in \pi_0(P')$.
\end{proof}
\begin{example}
Note that in the above proposition the hypothesis ``saturated'' is essential, for if $X=R^2$ and $Y$ is as in Example 3.4, then $\pi_0(\Omega(X))=1$ and $\pi_0(\Omega(Y))=Z*Z$, where $1$ is trivial group and $Z*Z$ is the free product of two copies of $Z$.
\end{example}
\begin{theorem}
A saturated subset $A$ of $P$ which is closed under inherited multiplication and inversion is a sub-H-group of $P$.
\end{theorem}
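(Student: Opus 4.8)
The plan is to endow $A$ with the maps inherited from $P$ and to verify the two requirements of Definition 2.4 directly: that $(A,\mu',\eta',c')$ is an H-group and that the inclusion is an H-homomorphism. Note that Theorem 2.7 is not directly available, since we are not given that the inclusion is monic, so I would argue from scratch. Before anything else I would record that $A$ is a genuine pointed subspace. Taking any $a\in A$, closure gives $a^{-1}\in A$ and then $a^{-1}.a\in A$; the H-group axiom $\mu(\eta,1_P)\simeq c$ yields a path in $P$ from $a^{-1}.a$ to $c(a)=p_0$, and since $a^{-1}.a\in A$, saturation forces this path, and in particular $p_0$, to lie in $A$. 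Hence $(A,p_0)$ is a pointed subspace.

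Next I would set $\mu'=\mu|_{A\times A}$, $\eta'=\eta|_A$ and $c'=c|_A$. Closure under multiplication and inversion guarantees these take values in $A$, and since $A$ carries the subspace topology they are continuous as maps into $A$; they are pointed because $\mu,\eta,c$ are. With these definitions the inclusion $i\colon A\to P$ satisfies $i\mu'=\mu(i\times i)$, $i\eta'=\eta i$ and $ic'=ci$ as strict equalities, so the homotopy relations of Definition 2.1 hold trivially and $i$ is an H-homomorphism.

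The substantive point is to check the H-group axioms for $A$, and this is exactly where saturation is used. Each axiom for $P$ is witnessed by a pointed homotopy $H$ on $P\times I$ (respectively on $P\times P\times P\times I$ for associativity); I would restrict $H$ to the part of its domain lying over $A$. For a fixed point $x$ of this restricted domain, the track $t\mapsto H(x,t)$ is a path in $P$ one of whose endpoints is an element assembled from members of $A$ by $\mu,\eta,c$ --- such as $a.p_0$, $a^{-1}.a$, or $(a.b).d$ --- and hence lies in $A$ by closure. Saturation then drags the whole track into $A$, so the restricted homotopy is a pointed homotopy inside $A$ witnessing the same axiom for $\mu',\eta',c'$. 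Carrying this out for $\mu(1_P,c)\simeq\mu(c,1_P)\simeq1_P$, for $\mu(\eta,1_P)\simeq\mu(1_P,\eta)\simeq c$, and for $\mu(\mu,1_P)\simeq\mu(1_P,\mu)$ shows $(A,\mu',\eta',c')$ is an H-group, and together with the previous paragraph this makes $A$ a sub-H-group of $P$.

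The delicate part is precisely this transport of homotopies: the key observation to get right is that it suffices for a \emph{single} endpoint of each homotopy track to belong to $A$, since saturation then places the entire path component, and thus the whole track, inside $A$; pointedness of the original homotopies ensures the restrictions stay pointed. All remaining verifications --- well-definedness and continuity of the restricted maps, and the strict equalities making $i$ an H-homomorphism --- are routine.
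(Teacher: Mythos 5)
Your proof is correct, but it takes a different route from the paper's. The paper does not verify the H-group axioms for $A$ directly; instead it invokes Theorem 2.7, which reduces everything to checking that the inclusion $i\colon A\to P$ is monic in $hTop^{*}$, and it proves monic-ness in one stroke from saturation: given $h_1,h_2\colon Z\to A$ with $i h_1\simeq i h_2$ via $H\colon Z\times I\to P$, each track $t\mapsto H(z,t)$ has its endpoints in $A$, hence lies in $A$ by saturation, so $H$ restricts to a homotopy in $A$. Your opening remark that ``Theorem 2.7 is not directly available, since we are not given that the inclusion is monic'' slightly misses the point — monic-ness is not a hypothesis to be granted but is itself a consequence of saturation, by exactly the endpoint-dragging observation you use later; that is the entire content of the paper's proof. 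What you do instead is apply the same saturation mechanism not to an arbitrary homotopy but to the specific structure homotopies ($\mu(1_P,c)\simeq 1_P$, $\mu(\eta,1_P)\simeq c$, associativity), restricting each to the part of its domain over $A$ and checking one endpoint of each track lies in $A$ by closure. This buys self-containedness (no reliance on the criterion imported from \cite{KP}) and makes explicit where closure under multiplication and inversion enters (it supplies the endpoint $a.p_0$, $a^{-1}.a$, or $(a.b).d$ in $A$), at the cost of more bookkeeping; the paper's version is shorter and isolates the reusable fact that the inclusion of a saturated subset is monic. Both arguments are sound, and both share the same implicit nonemptiness assumption needed to place $p_0$ in $A$, which you at least address explicitly.
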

\begin{proof}
Let $\mu_A=\mu|_{A\times A}$ and $\eta_A=\eta|_A$ be as multiplication and inversion of $A$. By Theorem 2.7, it suffices to show that $i:A\lo P$ is monic. Let $h_1,h_2:Z\lo A$ such that $i\circ h_1\simeq i\circ h_2$ by a homotopy $H:Z\times I\lo P$. Since $H(z,0),H(z,1)\in A$ and path components of $A$ and $P$ coincide, $H(z,t)\in A$, for all $z\in Z, t\in I$. Hence the result holds.
\end{proof}
\begin{theorem}
For every subgroup $K$ of $\pi_0(P)$, there exists a sub-H-group $P_K$ of $P$ such that $\pi_0(P_K)=K$.
\end{theorem}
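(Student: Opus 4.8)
The plan is to take $P_K$ to be the union of exactly those path components of $P$ that represent elements of $K$. Precisely, I would set
\[
P_K=\{\,g\in P\mid [g]\in K\,\},
\]
where $[g]\in\pi_0(P)$ denotes the path component of $g$. Since $K$ is a subgroup it contains the identity $[p_0]$ of $\pi_0(P)$, so $p_0\in P_K$ and $P_K$ is a pointed subspace of $P$. Moreover, if $g\in P_K$ and $g'$ lies in the same path component as $g$, then $[g']=[g]\in K$, so $g'\in P_K$; thus $P_K$ is a union of path components of $P$ and is therefore saturated in the sense of Definition 3.6.

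The core of the proof is to check closure under the inherited multiplication and inversion and then apply Theorem 3.11. For multiplication, suppose $g_1,g_2\in P_K$, so that $[g_1],[g_2]\in K$. Since the group operation on $\pi_0(P)$ is the one induced by $\mu$ (Proposition 2.3), the path component of $g_1.g_2$ is $[g_1][g_2]$, which lies in $K$ because $K$ is a subgroup; hence $g_1.g_2\in P_K$. Likewise the path component of $g^{-1}=\eta(g)$ is $[g]^{-1}$ in $\pi_0(P)$, so $g\in P_K$ forces $\eta(g)\in P_K$. Having shown that $P_K$ is a saturated subset closed under the inherited multiplication and inversion, Theorem 3.11 immediately gives that $P_K$ is a sub-H-group of $P$.

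It then remains to identify $\pi_0(P_K)$ with $K$. Because $P_K$ is a union of path components of $P$, the path components of $P_K$ are exactly the components of $P$ lying in $K$, so the inclusion $i:P_K\lo P$ induces a bijection of $\pi_0(P_K)$ onto $K\sub\pi_0(P)$. As $i$ is an H-homomorphism, Proposition 2.3 guarantees that $\pi_0(i):\pi_0(P_K)\lo\pi_0(P)$ is a group homomorphism; being an injective homomorphism with image $K$, it is a group isomorphism onto $K$, and we conclude $\pi_0(P_K)=K$.

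I do not anticipate a serious obstacle: the whole argument rests on Theorem 3.11, which reduces matters to verifying saturation and closure. The one point deserving care is the assertion that the path component of a product (respectively an inverse) is the product (respectively the inverse) of the path components; this is exactly the functoriality of $\pi_0$ together with the group structure of Proposition 2.3, and so is available to us. A minor technical remark is that closure under $\mu$ also uses that the continuous map $\mu$ sends the path component of $(g_1,g_2)$ in $P\times P$, which is the product of the path components of $g_1$ and $g_2$, into a single path component of $P$.
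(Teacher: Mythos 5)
Your proposal is correct and follows essentially the same route as the paper: define $P_K=\{g\in P\mid [g]\in K\}$, use the group structure of $\pi_0(P)$ to check closure under the inherited multiplication and inversion together with saturation, and invoke Theorem 3.11. Your added verification that $\pi_0(P_K)=K$ via the induced map $\pi_0(i)$ is a welcome explicit step that the paper leaves implicit, but it does not change the argument.
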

\begin{proof}
Define $P_K=\{g\in P|\ [g]\in K\}$, we show that $P_K$ is a sub-H-group of $P$.
Let $x,y\in P_K$, then $[x],[y]\in K$. Since $K$ is a subgroup of $\pi_0(P)$, $[x][y]=[x.y]\in K$ and $[x]^{-1}=[x^{-1}]\in K$ which implies $x.y,x^{-1}\in P_K$. Therefore $P_K$ is closed under inherited multiplication and inversion. Hence by Theorem 3.11 $P_K$ is a sub-H-group.
\end{proof}
The following corollary is a consequence of definitions.
\begin{corollary}
Let $P'$ be a saturated sub-H-group of $P$ and $K=\pi_0(P')$, then $P'=P_K$.
\end{corollary}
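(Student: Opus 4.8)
The plan is to prove the two inclusions $P'\sub P_K$ and $P_K\sub P'$ separately, after first pinning down how $K=\pi_0(P')$ sits inside $\pi_0(P)$. The point of departure is the remark that, via Proposition 3.10, saturation forces the homomorphism $\pi_0(i):\pi_0(P')\lo\pi_0(P)$ induced by the inclusion $i:P'\lo P$ to be injective: if two points $x,y\in P'$ lie in the same path component of $P$, then a $P$-path joining them has connected image lying in a single path component of $P$, and that component, meeting $P'$ at $x$, is by saturation contained in $P'$; hence the path lies in $P'$ and $x,y$ already lie in the same path component of $P'$. Thus $K=\pi_0(P')$ may be identified with its image, namely the set of those path components of $P$ that meet $P'$; explicitly, $[g]\in K$ if and only if the path component of $g$ in $P$ contains a point of $P'$, equivalently $g\hin P'$.

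With this description in hand, both inclusions become immediate. For $P'\sub P_K$, take $g\in P'$; then the path component of $g$ in $P$ manifestly contains the point $g\in P'$, so $[g]\in K$ and hence $g\in P_K$ by the definition of $P_K$. For $P_K\sub P'$, take $g\in P_K$, so that $[g]\in K$; by the description above the path component of $g$ meets $P'$ at some point $x$, and since $P'$ is saturated the whole path component of $x$, which contains $g$, is contained in $P'$, whence $g\in P'$.

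The only genuine content is the identification in the first paragraph; once the equivalence $[g]\in\pi_0(P')\Leftrightarrow g\hin P'$ is established, the remainder is a direct unwinding of the definition of $P_K$ and of saturation, which is why the statement can be advertised as a consequence of the definitions. I do not expect any real obstacle: the single step that must be handled with care is checking that saturation is exactly what converts membership of $g$ in a path component meeting $P'$ into genuine membership $g\in P'$, so that no element of $P_K$ can escape $P'$.
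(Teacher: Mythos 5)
Your proof is correct and is exactly the unwinding of the definitions that the paper has in mind when it calls the corollary ``a consequence of definitions''; in particular, your identification of $\pi_0(P')$ with the set of path components of $P$ meeting $P'$ (justified by saturation) is the right way to make sense of $[g]\in K$, and the two inclusions then follow as you describe. No gap.
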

Suppose that $A$ and $B$ are two subsets of $P$, by definition we have $\wt{AB}=\{p\in P\ |\ p\hin AB\}$, where $AB=\{a.b\ |\ a\in A,\ b\in B\}$.
Note that if $P'$ and $P''$ are sub-H-groups of $P$, then $\widetilde{P'P''}$ do not need to be an H-group, since $p'_1.p''_1.p'_2.p''_2$ is not necessarily connected to $p'_1.p'_2.p''_1.p''_2$ by a path, where $p_1',p_2'\in P'$ and $p_1'',p_2''\in P''$. For example, consider $Z=Im(\al_4)$ in Example 3.4, then $\Omega Z$ and $\Omega Y$ are sub-H-groups of $\Omega X$ and $\al_1*\al_4*\al_3*\al_4$ is not homotopic to $\al_1*\al_3*\al_4*\al_4$. But if $P$ is an Abelian H-group, then $P'P''$ will be an H-group, and we have the following
useful generalization of this observation.
\begin{proposition}
If $P'$ and $P''$ are saturated sub-H-groups of $P$, then $\wt{ P'P''}$ is a sub-H-group of $P$ if and only if $\widetilde{P'P''}=\widetilde{P''P'}.$
\end{proposition}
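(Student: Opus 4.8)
The plan is to push the entire question down to the group $\pi_0(P)$, where it becomes the classical fact that the product of two subgroups is again a subgroup exactly when the two subgroups permute. The engine for this reduction is the correspondence, already set up in this section, between saturated sub-H-groups of $P$ and subgroups of $\pi_0(P)$: by Proposition 3.9 a saturated sub-H-group has a subgroup of $\pi_0(P)$ as its set of path components, while by Theorem 3.12 together with Corollary 3.14 every subgroup $K$ of $\pi_0(P)$ arises as $\pi_0(P_K)$ with $P_K=\{g\in P\mid [g]\in K\}$, and $P_K$ is determined by $K$. First I would record that $\wt{P'P''}$ is always saturated: a path from $y$ to $x$ followed by a path witnessing $x\hin P'P''$ shows $y\hin P'P''$, so the correspondence applies to it.

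The key computational step is to identify the set of path components of $\wt{P'P''}$ inside $\pi_0(P)$. Writing $H=\pi_0(P')$ and $K=\pi_0(P'')$, regarded as subgroups of $\pi_0(P)$ via saturation, I claim
$$\pi_0(\wt{P'P''})=HK=\{[a][b]\mid a\in P',\, b\in P''\}.$$
Indeed, $[g]\in\pi_0(\wt{P'P''})$ means $g\hin P'P''$, i.e. $g$ lies in the path component of some $a.b$ with $a\in P'$, $b\in P''$; since $\pi_0(P)$ is a group (Proposition 2.3) this says exactly $[g]=[a.b]=[a][b]$ with $[a]\in H$, $[b]\in K$, and conversely every product $a.b$ with $a\in P'$, $b\in P''$ lies in $P'P''\sub\wt{P'P''}$. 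The same computation gives $\pi_0(\wt{P''P'})=KH$. Because a saturated set is the union of the path components it meets, it is determined by its image in $\pi_0(P)$; hence $\wt{P'P''}=\wt{P''P'}$ if and only if $HK=KH$.

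It remains to match ``sub-H-group'' with ``subgroup''. If $\wt{P'P''}$ is a sub-H-group then, being saturated, Proposition 3.9 forces $HK=\pi_0(\wt{P'P''})$ to be a subgroup of $\pi_0(P)$. Conversely, if $HK$ is a subgroup then Theorem 3.12 produces the sub-H-group $P_{HK}$, and since $P_{HK}$ and $\wt{P'P''}$ are saturated sets with the same image $HK$ in $\pi_0(P)$ they coincide, so $\wt{P'P''}$ is a sub-H-group. Thus $\wt{P'P''}$ is a sub-H-group if and only if $HK$ is a subgroup of $\pi_0(P)$. Invoking the classical equivalence that, for subgroups $H,K$ of a group, $HK$ is a subgroup if and only if $HK=KH$, and chaining it with the displayed equivalence from the previous paragraph, I obtain
$$\wt{P'P''}\text{ is a sub-H-group}\iff HK\text{ is a subgroup}\iff HK=KH\iff\wt{P'P''}=\wt{P''P'},$$
which is the assertion.

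The main obstacle is the bookkeeping of the middle paragraph: one must check that the inclusion $\pi_0(P')\hookrightarrow\pi_0(P)$ is injective (this is exactly where saturation of $P'$ is used, through Proposition 3.9), so that $H$ and $K$ are honestly subgroups and the set product $HK$ makes sense in $\pi_0(P)$, and that passing from a saturated subset of $P$ to its image in $\pi_0(P)$ loses no information. Once these identifications are made, all the topology has been absorbed and only elementary group theory remains. A more hands-on alternative, bypassing the correspondence, would verify closure of $\wt{P'P''}$ under $\mu$ and $\eta$ directly and then apply Theorem 3.11: given $x$ joined by a path to $a_1.b_1$ and $y$ to $a_2.b_2$, continuity of $\mu$ joins $x.y$ to $(a_1.b_1).(a_2.b_2)$, and homotopy associativity combined with the hypothesis $\wt{P'P''}=\wt{P''P'}$ applied to the middle factor $b_1.a_2\in P''P'$ rearranges this to a point of $P'P''$, while closure under $\eta$ uses $[(a.b)^{-1}]=[b^{-1}][a^{-1}]$ in $\pi_0(P)$. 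I prefer the $\pi_0$ route, since it makes the ``$HK=KH$'' mechanism transparent and reuses the machinery already built in this section.
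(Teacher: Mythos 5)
Your proposal is correct and follows essentially the same route as the paper: identify $\pi_0(\wt{P'P''})$ with the set product $HK$ of the subgroups $H=\pi_0(P')$ and $K=\pi_0(P'')$, use saturation and the correspondence of Corollary 3.13 to transfer the question to $\pi_0(P)$, and invoke the classical fact that $HK$ is a subgroup iff $HK=KH$. Your write-up just supplies the details the paper leaves as ``easy to check'' (note the citation should be Corollary 3.13, not 3.14).
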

\begin{proof}
 Put $H=\pi_0(P')$ and $K=\pi_0(P'')$, it is easy to check that $\pi_0(\wt{P'P''})=HK$. Since $\wt{P'P''}$ is saturated, by Corollary 3.13 and the similar algebraic fact, the result holds.
\end{proof}
\begin{lemma}
If $P'$ and $P''$ are sub-H-groups of $P$, then the following statements hold.\\
(i) $\wt{P'P'}=\wt{P'}$.\\
(ii) $gP'=g\wt{P'}=\wt{gP'}$, for each $g\in P$.\\
(iii) $P'P''\sub \wt{P'}\wt{P''}\sub \wt{P'P''}$.\\
(iv) $\wt{(g_1P')(g_2P')} = g_1(P'g_2)P'$, for each $g_1,g_2\in P$.\\
(v) $g'P'=\wt{P'}$, for each $g'\hin P'$.
\end{lemma}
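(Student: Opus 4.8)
The plan is to read all five identities as statements about the relation $\hin$ and the saturation operator $\wt{\ \cdot\ }$, reducing each to a manipulation of path components. The two workhorses will be: (W1) continuity of $\mu$ and $\eta$, which lets me multiply and invert paths pointwise, so that a path from $a$ to $a^{*}$ and a path from $b$ to $b^{*}$ yield a path from $a.b$ to $a^{*}.b^{*}$, and hence $a\hin a^{*}$, $b\hin b^{*}$ give $a.b\hin a^{*}.b^{*}$; and (W2) the coherence homotopies $\mu(\mu,1)\simeq\mu(1,\mu)$, $\mu(1,c)\simeq\mu(c,1)\simeq 1$, $\mu(\eta,1)\simeq\mu(1,\eta)\simeq c$, which supply canonical paths from $(x.y).z$ to $x.(y.z)$, from $p_{0}.x$ to $x$, and from $g^{-1}.(g.x)$ to $x$. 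I would record two standing facts: saturation is idempotent and monotone with $w\hin\wt A\Leftrightarrow w\hin A$; and, by Lemma 3.3(iv), every coset is a union of path components, hence already saturated. As a preliminary I would check that $\wt{P'}$ is itself a saturated sub-H-group --- it is saturated by idempotence and closed under the inherited operations by (W1) with Lemma 3.3(ii), so Theorem 3.11 applies --- which legitimizes writing the coset $g\wt{P'}$.

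With this machinery parts (i), (iii), (v) are short. For (i), a path from $p'$ to $p'.p_{0}\in P'P'$ gives $P'\sub\wt{P'P'}$, while Lemma 3.3(ii) gives $\wt{P'P'}\sub\wt{P'}$; idempotence and monotonicity of $\wt{\ \cdot\ }$ then close the equality. For (iii), the first inclusion is monotonicity of the set product, and the second is an immediate application of (W1): if $x\hin P'$ and $y\hin P''$ then $x.y\hin p'.p''\in P'P''$. For (v), I would first identify $\wt{P'}=p_{0}P'$ using the path from $p_{0}^{-1}.x$ to $x$, and then quote Lemma 3.3(iii): $g'P'=p_{0}P'$ holds exactly when $p_{0}^{-1}.g'\hin P'$, which is equivalent to the hypothesis $g'\hin P'$.

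Part (ii) is the bridge between the two meanings of coset: the coset of Definition 3.2 and the saturated translate. The crux is that $gP'$ equals the saturation of the exact translate $\{g.p'\mid p'\in P'\}$; the inclusion $\supseteq$ uses the (W2) path from $g^{-1}.(g.p')$ to $p'$ to put each $g.p'$ into the coset, and $\sub$ uses that $g'\in gP'$ forces, via the same canonical paths, $g'$ to be path-connected to $g.(g^{-1}.g')$ and thence to $g.p'$ for some $p'\in P'$. Granting this, $\wt{gP'}=gP'$ because cosets are already saturated, and $g\wt{P'}=gP'$ because passing from $P'$ to $\wt{P'}$ leaves the relation $\hin$ unchanged.

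The main obstacle is part (iv). I would first use (W1) to pass from cosets to representatives: any element of $(g_{1}P')(g_{2}P')$ is $\hin (g_{1}.p').(g_{2}.q')$ for some $p',q'\in P'$, so the left-hand side is the saturation of $\{(g_{1}.p').(g_{2}.q')\}$. The homotopy-associativity paths of (W2) then regroup this as $g_{1}.((p'.g_{2}).q')$, and since the path from $(p'.g_{2}).g_{2}^{-1}$ to $p'\in P'$ shows $p'.g_{2}\in P'g_{2}$, the point lands in $g_{1}(P'g_{2})P'$; the reverse inclusion runs the regrouping backwards, using the path from $(c.g_{2}^{-1}).g_{2}$ to $c$ to rewrite a factor of $P'g_{2}$ as $p'.g_{2}$. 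The delicate part --- and the reason this is the hard step --- is the saturation bookkeeping: because left multiplication $x\mapsto g_{1}.x$ in an H-group is only a homotopy equivalence and not a homeomorphism, the exact triple product on the right is not visibly saturated, so each inclusion must be justified at the level of path components through (W1) and (W2) rather than by set-theoretic translation, and one must confirm that the right-hand side already absorbs the path components created on the left. I expect essentially all the effort to go into this re-association together with the careful tracking of saturations.
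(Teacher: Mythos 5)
Your proposal is correct and follows essentially the same route as the paper: the paper proves only part (iv) in detail, by exactly the re-association-of-representatives argument you describe (its intermediate element $y=g_1^{-1}.x_1.g_2$ is your $p'.g_2$ up to a canonical path), and dismisses (i)--(iii) and (v) as easily proved, which you fill in with the expected short arguments from Lemma 3.3, the structure homotopies, and the saturation of cosets. No gaps.
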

\begin{proof}
For (iv) let $x\in\wt{(g_1P')(g_2P')}$, then $x$ is connected to $x_1.x_2$, where $g_1^{-1}.x_1,g_2^{-1}.x_2\hin P'$. Let $y=g_1^{-1}.x_1.g_2$. Thus $y.g_2^{-1}\hin P'$ and $y^{-1}.g_1^{-1}x$ is connected to $g_2^{-1}.x_2\hin P'$ which implies $x\in g_1(P'g_2)P'$. Conversely, for $x\in g_1(P'g_2)P'$, there exists $y\in P$ such that $y.g_2^{-1}\hin P'$ and $y^{-1}.g_1^{-1}.x\hin P'$. If $p_1'\in P'$ is an element that is connected to $y^{-1}.g_1^{-1}.x$ by a path, then put $x_1=g_1.y.g_2^{-1}$ and $x_2=g_2.p_1'$. The rest of statements can be easily proved.
\end{proof}
\begin{xrem}
Although cosets of every subset of $P$ are saturated, but if $A$ and $B$ are saturated subset of $P$, then $AB$ is not necessarily equal to $\wt{AB}$. For example, let $A=\Omega (Im(\al_4))$ and $B=\Omega Y$, as in Example 3.4, then $\al_4\in\wt{AB}$, but $\al_4$ can not be written as multiplication of two loops. Also let $P'=\Omega Y$, $g_1=\al_2$ and $g_2=\al_4$, then $\al_2*\al_1*\al_4*\al_3\in \wt{(g_1P')(g_2P')}$, but $\al_2*\al_1*\al_4*\al_3\notin (g_1.g_2)P'$.
\end{xrem}
The notation $\wt{P'P''}$ suggests a binary operation on cosets. If $P'$
is a sub-H-group of $P$, we can multiply $g_1P'$ and $g_2P'$, and it is natural that hope to get $g_1g_2P'$.
But this does not always happen as we have shown in above remark. Here is one possible criterion.
\begin{lemma}
If $P'$ be a sub-H-group of $P$, then the following two conditions are equivalent.\\
(i) $\wt{(g_1P')(g_2P')}=(g_1.g_2)P'$, for all $g_1,g_2\in P$; \\
(ii) $gP'=P'g$ (or equivalently $g^{-1}P'g=\wt{P'}$), for all $g\in P$.
\end{lemma}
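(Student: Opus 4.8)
The plan is to mirror the elementary group-theoretic fact that the coset product rule $(g_1H)(g_2H)=g_1g_2H$ holds precisely when $H$ is normal, i.e. $gH=Hg$ for every $g$. The computational engine will be Lemma 3.15(iv), which identifies the saturated product $\wt{(g_1P')(g_2P')}$ with $g_1(P'g_2)P'$; the part played by equality of sets in the group case is played here by homotopic belonging $\hin$ together with saturation, and every regrouping of a triple product is licensed by homotopy associativity as recorded in the Remark following Lemma 3.7.

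For (ii)$\,\Rightarrow\,$(i), I would begin from Lemma 3.15(iv) and substitute the hypothesis $P'g_2=g_2P'$, obtaining
$$\wt{(g_1P')(g_2P')}=g_1(P'g_2)P'=g_1(g_2P')P'.$$
Reading an element of the right-hand side as path-connected to some $g_1.(g_2.p_1').p_2'$ with $p_1',p_2'\in P'$, regrouping to $(g_1.g_2).(p_1'.p_2')$ by associativity, and using $P'P'\sub\wt{P'}$ with Lemma 3.15(i) to absorb the trailing $P'$-factor, I would land in $(g_1.g_2)\wt{P'}=(g_1.g_2)P'$ (Lemma 3.15(ii)); the reverse inclusion is symmetric. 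This yields $\wt{(g_1P')(g_2P')}=(g_1.g_2)P'$, so this direction is essentially bookkeeping.

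For the harder direction (i)$\,\Rightarrow\,$(ii), I would specialize (i) at $g_1=g$, $g_2=g^{-1}$. Since $g.g^{-1}\hin p_0\in P'$, Lemma 3.15(v) gives $(g.g^{-1})P'=\wt{P'}$, while Lemma 3.15(iv) rewrites the left side, so that
$$g(P'g^{-1})P'=\wt{P'}.$$
Since $p'.g^{-1}\in P'g^{-1}$ and $g.p'.g^{-1}\in(g.p'.g^{-1})P'$ by Lemma 3.3(i), each $g.p'.g^{-1}$ lies in the left side, and I would extract the conjugation fact that $g.p'.g^{-1}\hin P'$ for every $g\in P$ and every $p'\in P'$. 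Applying this also with $g$ replaced by $g^{-1}$, and pushing a path from $g^{-1}.x$ to a point of $P'$ through the continuous maps $z\mapsto g.z.g^{-1}$ and $z\mapsto g^{-1}.z.g$ (together with the associativity identities $x.g^{-1}\hin g.(g^{-1}.x).g^{-1}$ and $g^{-1}.x\hin g^{-1}.(x.g^{-1}).g$), I would convert membership in $gP'$ into membership in $P'g$ and conversely, giving both inclusions and hence (ii). The parenthetical form $g^{-1}P'g=\wt{P'}$ is exactly the two-sided conjugation statement so obtained.

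The main obstacle is the extraction in (i)$\,\Rightarrow\,$(ii): passing from the single set identity $g(P'g^{-1})P'=\wt{P'}$ to the clean elementwise statement $g.p'.g^{-1}\hin P'$, and then bootstrapping this one-sided conjugation bound into the two coset inclusions. All the delicacy lies in tracking saturation and in verifying that each regrouping is legitimate up to path-components; once the conjugation fact is isolated and applied to both $g$ and $g^{-1}$, the inclusions follow from the associativity rewrites above.
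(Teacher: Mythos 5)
Your proposal is correct and follows essentially the same route as the paper: the direction (ii)$\Rightarrow$(i) is the chain $\wt{(g_1P')(g_2P')}=g_1(P'g_2)P'=g_1g_2P'P'=g_1g_2\wt{P'}=g_1g_2P'$ via Lemma 3.15, and the direction (i)$\Rightarrow$(ii) specializes (i) to the pair $g,g^{-1}$ to extract the conjugation containment $gP'g^{-1}\sub\wt{P'}$ and then converts it into $gP'\sub P'g$ for all $g$, exactly as in the paper. Your version merely makes the element-level bookkeeping (the use of $\hin$, saturation, and the associativity regroupings) more explicit than the paper's terser write-up.
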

\begin{proof}
Let (ii) holds, then by Lemma 3.15 we have:\\
 $\wt{(g_1P')(g_2P')} =g_1({P'}g_2)P'=g_1g_2{P'}{P'}=g_1g_2\wt{P'P'}= g_1g_2\wt{P'}=g_1g_2P'.$\\
  Conversely, if (i) holds, then  $g^{-1}P'g\subseteq \wt{(g^{-1}P'g)P'}$, since $p_0\in P'$, and $\wt{(g^{-1}P')(gP')} =
g^{-1}gP'(= \wt{P'})$ by hypothesis. Thus $gP'g^{-1}\subseteq \wt{P'}$ which implies that $gP'\subseteq \wt{P'}g=P'g$. Since this
holds for all $g\in P$, we have $P'g\sub gP'$, and the result follows.
\end{proof}
Note that if $gP'g^{-1}\subseteq \wt{P'}$, for all $g\in P$, then in fact $gP'g^{-1}=\wt{P'}$,
for all $g\in P$.
\begin{definition}
Let $P'$ be a sub-H-group of $P$. Then we call $P'$ a normal sub-H-group of $P$, denoted by $P'\unlhd P$, if and only if $g.g'.g^{-1}\hin P'$ for each $g\in P$ and $g'\in P'$. We define the quotient of $P$ by $P'$, denoted by $P/P'$ as follows: $$P/P'=\{gP'\ |\ g\in P\}.$$
\end{definition}
\begin{theorem}
If $P'$ is a normal sub-H-group of $P$, then
$P/P'$ is a group in which the coset $p_0P'(=\wt{P'})$ is the identity element.
\end{theorem}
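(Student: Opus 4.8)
The plan is to show that the coset multiplication $(g_1P')(g_2P') = (g_1g_2)P'$ is a well-defined binary operation on $P/P'$, and then verify the group axioms directly. The crucial structural input will be Lemma 3.18: normality of $P'$ means precisely that $gP'g^{-1} \subseteq \wt{P'}$ for all $g$, which by the remark following Lemma 3.18 upgrades to $gP'g^{-1} = \wt{P'}$, hence $gP' = P'g$ for all $g$. This is exactly condition (ii) of Lemma 3.18, so condition (i) gives us $\wt{(g_1P')(g_2P')} = (g_1g_2)P'$ for free. First I would record this reduction, since it transfers all the work onto the algebraic identity already proved.

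\emph{Well-definedness.} Suppose $g_1P' = g_1''P'$ and $g_2P' = g_2''P'$. I would use Proposition 3.5, under which these equalities mean $g_1 \stackrel{P'}{\sim} g_1''$ and $g_2 \stackrel{P'}{\sim} g_2''$, i.e. $g_1.(g_1'')^{-1}\hin P'$ and $g_2.(g_2'')^{-1}\hin P'$. The goal is $(g_1.g_2)P' = (g_1''.g_2'')P'$, equivalently $(g_1.g_2).(g_1''.g_2'')^{-1}\hin P'$. Here the homotopy-associativity Remark (after Lemma 3.7) and Lemma 3.3(ii) (closure of $\hin P'$ under products and inverses) do the bookkeeping, but the genuinely needed ingredient is normality: to move $g_1''$ past the $P'$-element arising from the second factor I must conjugate, and $g.g'.g^{-1}\hin P'$ is what licenses that. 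So well-definedness and the product formula are really the same fact, both resting on Definition 3.18.

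\emph{Group axioms.} With multiplication $gP' \cdot hP' := (g.h)P'$ established, associativity follows from the homotopy-associativity Remark after Lemma 3.7, which gives $(g_1.g_2).g_3 \stackrel{P'}{\sim} g_1.(g_2.g_3)$, hence $\big((g_1.g_2).g_3\big)P' = \big(g_1.(g_2.g_3)\big)P'$. The identity is the class $p_0P' = \wt{P'}$: since $\mu(1_P,c)\simeq 1_P$ and $\mu(c,1_P)\simeq 1_P$ yield paths from $g.p_0$ and $p_0.g$ to $g$, one checks $(g.p_0)P' = gP' = (p_0.g)P'$. The inverse of $gP'$ is $g^{-1}P'$, because $\mu(\eta,1_P)\simeq c$ and $\mu(1_P,\eta)\simeq c$ give paths from $g.g^{-1}$ and $g^{-1}.g$ into $p_0$, so $(g.g^{-1})P' = p_0P' = (g^{-1}.g)P'$; I would cite the proof of Lemma 3.3(i) where exactly such a path was constructed.

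\emph{The main obstacle} is not any single axiom but ensuring that every equality of cosets is read correctly through the homotopy relation rather than as a set-theoretic or strict-equality statement. Because multiplication and inversion in an H-group satisfy the axioms only up to homotopy, each step must be phrased in terms of $\hin P'$ (or $\stackrel{P'}{\sim}$) and justified by the explicit defining homotopies of the H-group together with Lemma 3.3(ii); the normality hypothesis is the one place where a genuinely nontrivial conjugation move is required, and I expect the cleanest exposition to isolate it once via Lemma 3.18 and then treat the remaining axioms as routine translations of the H-group structure into the coset relation.
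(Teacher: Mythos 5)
Your proposal is correct and follows essentially the same route as the paper: well-definedness of $(g_1P').(g_2P')=(g_1.g_2)P'$ via the conjugation move licensed by normality, associativity from the homotopy-associativity remark, and identity and inverses from the defining homotopies $\mu(1_P,c)\simeq 1_P$ and $\mu(\eta,1_P)\simeq c$ together with Lemma 3.3. The preliminary detour through Lemma 3.18 is an optional repackaging of the same normality input (the paper instead verifies well-definedness directly, showing $(h_1^{-1}.g_1^{-1}).(g_2.h_2)\hin P'$), so it does not change the substance of the argument.
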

\begin{proof}Define $(g_1P').(g_2P')=(g_1.g_2)P'$ and $(g_1P')^{-1}=g_1^{-1}P'$.
If $g_1P'=g_2P'$ and $h_1P'=h_2P'$, then $g_1^{-1}.g_2\widetilde{\in}
P'$ and $h_1^{-1}.h_2\widetilde{\in} P'$. Normality of $P'$
guaranties that $h_1^{-1}.(g_1^{-1}.g_2).h_1\widetilde{\in} P'$. By
associativity of $\mu$ and Lemma 3.3 (iii) we have
$$[h_1^{-1}.(g_1^{-1}.g_2).h_1].h_1^{-1}.h_2\widetilde{\in}
 P'\Rightarrow (h_1^{-1}.g_1^{-1}).(g_2.h_2)\widetilde{\in}
 P'\Rightarrow (g_2.h_2)P'=(g_1.h_1)P'.$$
 Therefore the product of two cosets does not depend on representatives.\\
{\it Associativity} : We must show that
$(g_1P'.g_2P').g_3P'=g_1P'.(g_2P'.g_3P')$ or equivalently
$(g_1.g_2).g_3P'=g_1.(g_2.g_3)P'$. By Lemma 3.3 (iii) it suffices to
show that $(g_1.g_2).g_3 \stackrel{P'}{\sim} g_1.(g_2.g_3)$, and this
holds by associativity of $\mu$. \\
{\it Inversion}: By definition of inverse we have
$gP'.g^{-1}P'=(g.g^{-1})P'$. Since $g.g^{-1}$ is
connected to $p_0$ by a path, $(g.g^{-1})P'=p_0P'\wt{P'}$\\
{\it Identity} : It is easy to see that $g.p_0\stackrel{P'}{\sim} g$, thus
$gP'.p_0P'=(g.p_0)P'=gP'$.\\
\end{proof}
\begin{corollary}
If $P'$ is a normal sub-H-group of $P$, then $P/P'\cong P/\wt{P'}$.
\end{corollary}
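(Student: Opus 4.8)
The plan is to show that passing from $P'$ to its saturation $\wt{P'}$ does not change the partition of $P$ into cosets, so that the two quotient groups are carried onto one another by the identity map. Before this can be phrased as a group isomorphism, however, I must first check that $P/\wt{P'}$ is itself a group, i.e. that $\wt{P'}$ is a normal sub-H-group of $P$.

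First I would verify that $\wt{P'}$ is a sub-H-group. It is saturated by construction, so by Theorem 3.11 it suffices to show it is closed under the inherited multiplication and inversion. If $g_1,g_2\in\wt{P'}$, then $g_1,g_2\hin P'$, whence Lemma 3.3(ii) gives $g_1.g_2\hin P'$ and $g_1^{-1}\hin P'$; by the definition of saturation these say precisely $g_1.g_2\in\wt{P'}$ and $g_1^{-1}\in\wt{P'}$. For normality, given $g\in P$ and $g'\in\wt{P'}$ I must show $g.g'.g^{-1}\hin\wt{P'}$. Since $x\hin\wt{P'}$ is equivalent to $x\hin P'$ (concatenate a path witnessing $x\hin\wt{P'}$ with one witnessing membership of its endpoint in $P'$), it is enough to prove $g.g'.g^{-1}\hin P'$. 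Choosing $p'\in P'$ joined to $g'$ by a path $\al$, the path $t\mapsto g.\al(t).g^{-1}$ connects $g.g'.g^{-1}$ to $g.p'.g^{-1}$, and the latter satisfies $g.p'.g^{-1}\hin P'$ by normality of $P'$; concatenating yields $g.g'.g^{-1}\hin P'$, where the ambiguity in associating the triple product is harmless since the two bracketings are $\stackrel{P'}{\sim}$-equivalent by the Remark following Lemma 3.7. Thus $\wt{P'}\unlhd P$ and $P/\wt{P'}$ is a group by Theorem 3.20.

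The heart of the argument is then Lemma 3.15(ii), which asserts $gP'=g\wt{P'}$ for every $g\in P$. This says that the left coset of $g$ with respect to $P'$ and the one with respect to $\wt{P'}$ are literally the same subset of $P$, so the underlying sets $P/P'$ and $P/\wt{P'}$ coincide and the assignment $gP'\mapsto g\wt{P'}$ is the identity map. Because the group operations are given by $(g_1P')(g_2P')=(g_1.g_2)P'$ and $(g_1\wt{P'})(g_2\wt{P'})=(g_1.g_2)\wt{P'}$, which agree under this coincidence of cosets, the identity map is a group isomorphism, giving $P/P'\cong P/\wt{P'}$. I do not anticipate any genuine obstacle: each step reduces to a result already established, and the only mildly technical point is the inheritance of normality by $\wt{P'}$, which follows by transporting the witnessing path under left and right multiplication by $g$ and $g^{-1}$.
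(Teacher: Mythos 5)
Your proof is correct and follows essentially the same route as the paper: the key step in both is Lemma 3.15(ii), which identifies $gP'$ with $g\wt{P'}$ so that the two quotients coincide coset by coset. Your additional verification that $\wt{P'}$ is itself a normal sub-H-group (so that $P/\wt{P'}$ is a group) is a detail the paper leaves implicit, but it does not change the approach.
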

\begin{proof}
Using Lemma 3.15 (ii) and the fact that $\wt{P'}$ is identity element of $P/P'$, the result holds.
\end{proof}
\begin{lemma}Let $P'$ be a sub-H-group of $P$ and $P''$ be a
sub-H-group of $P'$. If $g_1\stackrel{P''}{\sim} g_2$, then
$g_1\stackrel{P'}{\sim} g_2$ and $g.g_1\stackrel{P'}{\sim}\ g.g_2$ for all $g_1,g_2,g\in P$.
\end{lemma}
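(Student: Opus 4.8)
The plan is to transport the entire statement into the path-component group $\pi_0(P)$, where it reduces to elementary coset bookkeeping. By Proposition 2.3, $\pi_0(P)$ is a group under $[g_1][g_2] = [g_1.g_2]$ with identity $[p_0]$, and $\mu(\eta,1)\simeq c$ forces $[g]^{-1} = [g^{-1}]$. Since whether $g\hin P'$ holds depends only on the component $[g]$, we may form $Q := \{[g]\mid g\hin P'\}\sub \pi_0(P)$; by Lemma 3.3(ii) it is closed under products and inverses and contains $[p_0]$, hence is a subgroup. Defining $Q''$ analogously from $P''$, the inclusion $P''\sub P'$ gives $Q''\sub Q$.

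By Proposition 3.5 the relation $g_1\stackrel{P'}{\sim}g_2$ is the coset equality $g_1P' = g_2P'$, which by Lemma 3.3(iii) is equivalent to $g_2^{-1}.g_1\hin P'$, that is, to $[g_2]^{-1}[g_1]\in Q$; thus $g_1\stackrel{P'}{\sim}g_2$ asserts exactly that $[g_1]$ and $[g_2]$ lie in one left coset of $Q$, and the same description holds for $\stackrel{P''}{\sim}$ with $Q''$ in place of $Q$. The first conclusion is then immediate: from the hypothesis $g_1\stackrel{P''}{\sim}g_2$ we get $[g_2]^{-1}[g_1]\in Q''\sub Q$, whence $g_1\stackrel{P'}{\sim}g_2$.

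For the second conclusion I would simply cancel $[g]$ inside the group $\pi_0(P)$:
\[
[g.g_2]^{-1}[g.g_1] = [g_2]^{-1}[g]^{-1}[g][g_1] = [g_2]^{-1}[g_1]\in Q,
\]
the membership being the one just established; reading this back through the dictionary yields $(g.g_2)^{-1}.(g.g_1)\hin P'$, that is $g.g_1\stackrel{P'}{\sim}g.g_2$. All the substance lies in the first paragraph — recognising $\pi_0(P)$ as a genuine group and $Q,Q''$ as subgroups — after which both claims are the standard facts that enlarging the subgroup coarsens its cosets and that left translation preserves coset equality. The point I would watch most carefully is the side on which the coset relation sits: left translation annihilates $[g]^{-1}[g]$ and so respects the left-coset relation $g_2^{-1}.g_1\hin P'$ supplied by Lemma 3.3(iii), whereas reading $\stackrel{P'}{\sim}$ as a right-coset relation would instead demand that $Q$ be normal, a hypothesis not available here.
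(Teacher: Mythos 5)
Your proof is correct, and in substance it is the paper's own argument (whose entire written proof is ``using definitions and associativity of $\mu$'') repackaged one level up: producing a path from $(g.g_2)^{-1}.(g.g_1)$ to $g_2^{-1}.g_1$ by reassociating and cancelling $g^{-1}.g$ against $p_0$ is exactly your computation $[g.g_2]^{-1}[g.g_1]=[g_2]^{-1}[g_1]$ in $\pi_0(P)$, once one notes that the condition $x\hin P'$ depends only on $[x]$. What your route buys is that the first assertion becomes the literal triviality $Q''\subseteq Q$ and the second becomes group-theoretic cancellation, at the modest cost of checking that $Q$ is a well-defined subgroup -- which is precisely Lemma 3.3(ii), the same ingredient the paper would use. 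The one substantive point is the left/right issue you flag at the end, and you are right to worry about it: Proposition 3.5 literally defines $g_1\stackrel{P'}{\sim}g_2$ by the right-coset condition $g_1.g_2^{-1}\hin P'$ while simultaneously declaring the equivalence class of $g_1$ to be the left coset $g_1P'$; under the literal right-handed reading the second assertion of this lemma would demand normality of $Q$ and fails in general. Your left-coset reading, $g_2^{-1}.g_1\hin P'$, is the one consistent with Lemma 3.3(iii) and with how the relation is used in Theorems 3.18 and 3.21, and it is the reading under which the lemma is true -- so your proof stands, and it quietly repairs an inconsistency in the paper's conventions.
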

\begin{proof}
Using definitions and associativity of $\mu$, the result holds.
\end{proof}
\begin{theorem}
If $P'$ is a sub-H-group of $P$ and $P''$ is a
sub-H-group of $P'$, then the following statements hold.\\
(i) If $P'$ is a normal saturated sub-H-group, then $\pi_0(P')$ is a normal subgroup of $\pi_0(P)$.\\
(ii) $P''$ is a sub-H-group of $P$.\\
(iii) If $P''$ is normal in $P$, then it is normal in $P'$.\\
(iv) If $P''$ is normal in $P$, then $P'/P''$ is a subgroup
of $P/P''$. Also, ${P'}/{P''}$ is a normal subgroup of
$P/P''$ if and only if $P'$ is a normal sub-H-group of $P$.
\end{theorem}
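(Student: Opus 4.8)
The plan is to handle the four parts in turn, each time mirroring the corresponding fact from elementary group theory and translating every coset identity into a statement about the relation $\hin$ by means of Lemma 3.3. For (i), I would first invoke Proposition 3.9, whose saturation hypothesis is exactly the one we are given, to obtain that $\pi_0(P')$ is a subgroup of $\pi_0(P)$; only normality then remains. Since by Proposition 2.3 the group operation on $\pi_0(P)$ is induced by $\mu$, for $[g]\in\pi_0(P)$ and $[g']\in\pi_0(P')$ we have $[g][g'][g]^{-1}=[g.g'.g^{-1}]$. Normality of $P'$ in $P$ (Definition 3.19) says precisely that $g.g'.g^{-1}\hin P'$, so $g.g'.g^{-1}$ is path-connected to a point of $P'$ and hence $[g.g'.g^{-1}]\in\pi_0(P')$, which is the desired normality.

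For (ii) I would factor the inclusion $P''\hookrightarrow P$ as the composite $P''\hookrightarrow P'\hookrightarrow P$ of the two defining inclusions, each an H-homomorphism by Definition 2.4, and observe that a composite of H-homomorphisms is again one, by pasting the two defining homotopies of the form $\varphi\mu\simeq\mu'(\varphi\times\varphi)$. As $P''$ is already an H-group and a pointed subspace of $P$, Definition 2.4 then applies. Part (iii) is the restriction of a universally quantified condition: normality of $P''$ in $P$ gives $g.g''.g^{-1}\hin P''$ for every $g\in P$, and specializing $g$ to elements of the subspace $P'\subseteq P$ yields exactly normality of $P''$ in $P'$, using Theorem 2.6 to identify the operations of $P'$ with those of $P$ up to homotopy.

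Part (iv) carries the real content. Since $P''$ is normal in $P$, Theorem 3.20 makes $P/P''$ a group, and by (iii) together with Theorem 3.20 applied to $P'$ the quotient $P'/P''$ is a group. To see $P'/P''\le P/P''$, I would note that its elements are the cosets $g'P''$ with $g'\in P'$, a subcollection of $P/P''$; closure under the operation $(g_1'P'')(g_2'P'')=(g_1'.g_2')P''$ and under inverses holds because $g_1'.g_2'\hin P'$ and $g'^{-1}\hin P'$ by Lemma 3.3 (ii), so by Lemma 3.3 (iv) these products may be represented by genuine elements of $P'$, while the identity $\wt{P''}=p_0P''$ lies in $P'/P''$ since $p_0\in P'$. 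For the normality criterion, assuming $P'\unlhd P$ I would compute $(gP'')(g'P'')(gP'')^{-1}=(g.g'.g^{-1})P''$ and use $g.g'.g^{-1}\hin P'$ together with Lemma 3.3 (iv) to rewrite this with a representative in $P'$, placing it in $P'/P''$. Conversely, assuming $P'/P''\unlhd P/P''$, the same computation gives $(g.g'.g^{-1})P''=g_0P''$ for some $g_0\in P'$, whence $g_0^{-1}.(g.g'.g^{-1})\hin P''$ by Lemma 3.3 (iii); since $P''\subseteq P'$ this is $\hin P'$, and multiplying by $g_0\in P'$ via Lemma 3.3 (ii) and associativity recovers $g.g'.g^{-1}\hin P'$, that is, $P'\unlhd P$.

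The main obstacle I anticipate is the bookkeeping of which ambient space realizes $\hin$: cosets of $P''$ inside $P'$ versus inside $P$, and products formed with $\mu'$ versus $\mu$, must be reconciled using Theorem 2.6 and the fact that $\hin$ records only path components. Once one passes freely between representatives via Lemma 3.3 (ii)--(iv), each part collapses to its classical group-theoretic analogue; the converse half of the normality equivalence in (iv), where normality of $P'$ must be extracted from a statement about cosets of the smaller $P''$, is the step most in need of care.
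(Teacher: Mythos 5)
The paper's own proof of this theorem is a single line (``Using definitions and Lemma 3.20, the results hold''), so your proposal is not an alternative route but the missing details: reducing each claim to statements about $\hin$ via Lemma 3.3 and homotopy associativity is exactly what the paper intends, with your uses of Lemma 3.3(ii)--(iv) playing the role the paper assigns to Lemma 3.20. Parts (i), (ii) and (iv) are correct as you argue them; in particular your two-directional treatment of the normality equivalence in (iv), passing through a representative $g_0\in P'$ of $(g.g'.g^{-1})P''$ and recovering $g.g'.g^{-1}\hin P'$ by Lemma 3.3(ii)--(iii), is sound. (Minor citation slips: the quotient-group theorem is Theorem 3.18 and normality is Definition 3.17 in the paper's numbering, not 3.20 and 3.19.)

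The one place your reconciliation does not actually close the argument is (iii), which you flag as the main obstacle but then dismiss. Normality of $P''$ in $P$ supplies, for $g\in P'$ and $g''\in P''$, a path \emph{in $P$} from $\mu(g,\mu(g'',\eta(g)))$ to a point of $P''$. Normality of $P''$ in $P'$, read literally against Definitions 3.1 and 3.17 with $P'$ as the ambient H-group, asks for a path \emph{lying in $P'$} from $\mu'(g,\mu'(g'',\eta'(g)))$ to a point of $P''$. Theorem 2.6 only identifies $\mu'$ with $\mu\circ(i\times i)$ up to homotopies taking place in $P$, and a path in $P$ joining two points of $P'$ need not be deformable into $P'$. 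So ``restriction of a universally quantified condition'' suffices only if one either declares $\hin$ to be taken relative to the largest ambient space $P$ throughout (plausibly the paper's tacit convention) or adds a hypothesis such as saturation of $P'$, under which any path in $P$ issuing from a point of $P'$ stays in $P'$. This gap is shared with the paper, which offers no argument at all; in a self-contained proof it should be resolved by an explicit convention or an added hypothesis rather than by appeal to Theorem 2.6.
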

\begin{proof}
Using definitions and Lemma 3.20, the results hold.
\end{proof}
\begin{lemma}
The path component of $P$ that contains $p_0$, named principle component of $P$ which is denoted by $P_0$, is a normal sub-H-group of $P$ and $\pi_0(P)\simeq P/P_0$.
\end{lemma}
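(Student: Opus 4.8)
The plan is to establish the two assertions in turn. First I would show $P_0$ is a sub-H-group by appealing to Theorem 3.11: being a path component, $P_0$ is automatically saturated, so it remains only to verify that it is closed under the inherited multiplication and inversion. Both closures come from the fact that $\mu$, $\eta$ and $c$ are \emph{pointed} maps. Explicitly, if $\alpha_1,\alpha_2:I\lo P$ are paths from $p_0$ to $g_1,g_2\in P_0$, then $t\mapsto\mu(\alpha_1(t),\alpha_2(t))$ is a path from $\mu(p_0,p_0)=p_0$ to $g_1.g_2$, so $g_1.g_2\in P_0$; likewise $\eta\circ\alpha_1$ joins $\eta(p_0)=p_0$ to $g_1^{-1}$, giving $g_1^{-1}\in P_0$.

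For normality I must check that $g.g'.g^{-1}\hin P_0$ for every $g\in P$ and $g'\in P_0$. Since $P_0$ is a path component, $\hin P_0$ coincides with $\in P_0$, so it suffices to connect $g.g'.g^{-1}$ to $p_0$ by a path, which I would assemble as a concatenation. Conjugating a path from $g'$ to $p_0$ shows $g.g'.g^{-1}$ is connected to $g.p_0.g^{-1}$; the homotopy $\mu(1_P,c)\simeq1_P$ slides $g.p_0$ to $g$, so $g.p_0.g^{-1}$ is connected to $g.g^{-1}$; and the homotopy $\mu(1_P,\eta)\simeq c$ connects $g.g^{-1}$ to $p_0$. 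Hence $g.g'.g^{-1}\in P_0$, so $P_0\unlhd P$ and Theorem 3.19 makes $P/P_0$ a group.

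For the isomorphism $\pi_0(P)\cong P/P_0$ I would prove that for each $g$ the coset $gP_0$ equals the path component $[g]$ of $g$. One inclusion is immediate from Lemma 3.3: if $g'$ and $g$ lie in the same path component, then $g'P_0=gP_0$ by (iv) and $g'\in g'P_0$ by (i). For the reverse inclusion, $g'\in gP_0$ means $g^{-1}.g'\in P_0$, i.e.\ there is a path $\gamma$ from $g^{-1}.g'$ to $p_0$; translating $\gamma$ by $g$ and combining it with homotopy associativity and the identity and inverse homotopies produces a chain of path-connections $g'\leftrightarrow p_0.g'\leftrightarrow(g.g^{-1}).g'\leftrightarrow g.(g^{-1}.g')\leftrightarrow g.p_0\leftrightarrow g$, whence $g'\in[g]$. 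Thus the underlying partitions of $P/P_0$ and $\pi_0(P)$ are literally the same, and since both group operations carry the classes of $g_1$ and $g_2$ to the class of $g_1.g_2$ (the multiplication on $\pi_0(P)$ of Proposition 2.3 being the one induced by $\mu$), the correspondence $[g]\leftrightarrow gP_0$ is the desired group isomorphism.

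I expect the main obstacle to be the reverse inclusion $gP_0\subseteq[g]$: each $\simeq$ appearing in the H-group axioms is a homotopy of maps, and care is needed to extract from it an honest path between the specific elements at hand and then to splice all these paths into a single path witnessing $g'\in[g]$. Once $gP_0=[g]$ is in place, matching the induced multiplication on $\pi_0(P)$ with the coset multiplication of Theorem 3.19 is routine.
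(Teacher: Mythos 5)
Your proposal is correct and follows essentially the same route as the paper: the paper cites Dugundji for the fact that $P_0$ is a normal sub-H-group and declares the map $[g]\mapsto gP_0$ to be "easily an isomorphism," while you supply exactly the details being waved at — closure of $P_0$ under $\mu$ and $\eta$ via pointedness, the path-splicing argument for $g.g'.g^{-1}\hin P_0$, and the identity $gP_0=[g]$ that makes $\theta$ well defined and bijective. Nothing in your elaboration deviates from or adds to the paper's strategy; it is a careful filling-in of the omitted verifications.
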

\begin{proof}
The first claim can be founded in \cite{D}. For the second, define $\theta:\pi_0(P)\lo P/P_0$ by $\theta([g])=gP_0$ which is easily an isomorphism.
\end{proof}
\section{H-morphisms}
\begin{definition}Let $\varphi:P\longrightarrow Q$ be an H-homomorphism. We define the
$kernel$ of $\varphi$ as $$ker\varphi=\{g\in P|\
\varphi(g)\widetilde{\in}\ Q_0\},$$
where $Q_0$ is the principle component of $Q$.
\end{definition}
\begin{proposition}
Let $\varphi:(P,\mu_1,\eta_1,c_1)\longrightarrow (Q,\mu_2,\eta_2,c_2)$ be an H-homomor-phism, then $ker\varphi$ is a normal sub-H-group of $P$.
\end{proposition}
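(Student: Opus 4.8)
The plan is to verify that $\ker\varphi$ satisfies the two closure conditions needed by Theorem 3.11 (saturation together with closure under the inherited multiplication and inversion), then to upgrade that sub-H-group to a normal one by checking the defining condition $g.g'.g^{-1}\hin\ker\varphi$ directly. Throughout I would translate the topological statement ``$\varphi(g)\hin Q_0$'' into the algebraic statement ``$[\varphi(g)]$ is the identity of $\pi_0(Q)$'', since $Q_0$ is precisely the path component of the basepoint and so $\ker\varphi=\{g\in P\mid [\varphi(g)]=[q_0]\}$. This reinterpretation is the engine of the whole argument: it lets me replace path-level reasoning by the group homomorphism $\pi_0(\varphi):\pi_0(P)\lo\pi_0(Q)$ supplied by Proposition 2.3, whose kernel (in the ordinary group-theoretic sense) consists of exactly those classes $[g]$ with $\pi_0(\varphi)([g])=[q_0]$.

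First I would show $\ker\varphi$ is saturated: if $g\in\ker\varphi$ and $g_1$ lies in the same path component of $P$ as $g$, then $\varphi$ carries a path from $g$ to $g_1$ to a path from $\varphi(g)$ to $\varphi(g_1)$, so $[\varphi(g_1)]=[\varphi(g)]=[q_0]$ and hence $g_1\in\ker\varphi$; equivalently $\ker\varphi=(\pi_0\varphi)^{-1}(\{[q_0]\})$ is a union of path components of $P$. Next, closure under multiplication and inversion is immediate from Proposition 2.3 together with Lemma 3.3(ii): for $g_1,g_2\in\ker\varphi$ we have $[\varphi(g_1.g_2)]=[\varphi(g_1)][\varphi(g_2)]=[q_0]$ because $\pi_0(\varphi)$ is a homomorphism and $[\varphi(g_i)]=[q_0]$, and likewise $[\varphi(g^{-1})]=[\varphi(g)]^{-1}=[q_0]$; so $g_1.g_2,\,g^{-1}\in\ker\varphi$. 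With saturation and the two closure properties in hand, Theorem 3.11 gives at once that $\ker\varphi$ is a sub-H-group of $P$.

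For normality I would take arbitrary $g\in P$ and $g'\in\ker\varphi$ and compute in $\pi_0(Q)$: applying the homomorphism $\pi_0(\varphi)$ gives $[\varphi(g.g'.g^{-1})]=[\varphi(g)]\,[\varphi(g')]\,[\varphi(g)]^{-1}=[\varphi(g)]\,[q_0]\,[\varphi(g)]^{-1}=[q_0]$, using $[\varphi(g')]=[q_0]$. Hence $\varphi(g.g'.g^{-1})\hin Q_0$, i.e.\ $g.g'.g^{-1}\in\ker\varphi$, which is exactly the defining condition of Definition 3.18. Combining this with the previous paragraph shows $\ker\varphi$ is a normal sub-H-group.

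The only genuinely delicate point — and the step I would treat most carefully — is making sure every identity ``$[\varphi(g_1.g_2)]=[\varphi(g_1)][\varphi(g_2)]$'' is legitimate at the level of $\pi_0$, because $\varphi$ is only an H-homomorphism ($\varphi\mu\simeq\mu'(\varphi\times\varphi)$ up to homotopy, not on the nose) and the H-group axioms hold only up to homotopy. This is precisely what Proposition 2.3 guarantees: $\pi_0(\varphi)$ is an honest group homomorphism, so the homotopies are absorbed once we pass to path components, and no path-by-path bookkeeping is required. I expect no serious obstacle beyond being scrupulous that each manipulation is performed after applying $\pi_0$, where the structure is strictly that of groups and group homomorphisms.
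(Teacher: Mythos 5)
Your proof is correct, and it reaches the conclusion by a genuinely different mechanism than the paper's. Both arguments funnel through Theorem 3.11 (saturation plus closure under the inherited multiplication and inversion), but where you differ is in how the closure and normality conditions are verified. The paper works at the path level: it takes explicit paths $\al_1,\al_2$ from $\vf(g_1),\vf(g_2)$ into $Q_0$, forms $\mu_2(\al_1,\al_2)$, and uses the homotopy $\mu_2\circ(\vf\times\vf)\simeq\vf\circ\mu_1$ to transport the resulting path to one issuing from $\vf(g_1.g_2)$; normality is then attributed to associativity together with $\vf\circ\eta_1\simeq\eta_2\circ\vf$. You instead observe that $\ker\vf$ is exactly the preimage in $P$ of the identity class under the composite $P\to\pi_0(P)\xrightarrow{\pi_0(\vf)}\pi_0(Q)$, and then every closure condition --- including normality and inverse-closure --- collapses to a one-line computation in the honest group $\pi_0(Q)$, with Proposition 2.3 absorbing all the homotopy bookkeeping (in particular, preservation of inverses comes for free from $\pi_0(\vf)$ being a group homomorphism, so you never need to invoke $\vf\eta_1\simeq\eta_2\vf$ separately). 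Your route is shorter and more conceptual given that Proposition 2.3 is already on the table, and it makes transparent why the kernel must be normal (kernels of group homomorphisms are); the paper's route is more self-contained and exhibits the actual homotopies, which is in the spirit of the rest of Section 3. One small point of hygiene: your identity $\ker\vf=(\pi_0\vf)^{-1}(\{[q_0]\})$ conflates a subset of $P$ with a subset of $\pi_0(P)$; it should be read as the preimage under the quotient map $P\to\pi_0(P)$ of $(\pi_0\vf)^{-1}(\{[q_0]\})$, which is how you in fact use it.
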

\begin{proof}
Let $g_1,g_2\in ker\vf$, and let $\al_1,\al_2$ be paths from $\vf(g_1)$ and $\vf(g_2)$ with end in $Q_0$, respectively. Then $\mu_2(\al_1,\al_2)$ is a path from $\mu_2(\vf(g_1),\vf(g_2))$ with end in $Q_0$. By $\mu_2\circ(\vf\times\vf)\simeq \vf\circ\mu_1$, there exists a path from $\vf(g_1.g_2)$ with end in $Q_0$ which implies that $ker\vf$ is closed under multiplication of $P$. Similarly $ker\vf$ is closed under inversion of $P$. By definition $ker\vf$ is saturated that imply by Theorem 3.11, $ker\vf$ is a sub-H-group of $P$. Normality of $ker\vf$ follows from associativity and $\vf\circ\eta_1\simeq\eta_2\circ\vf$.
\end{proof}
Let $\varphi:P\longrightarrow Q$ be an H-homomorphism, $A\sub P$ and $B\sub P'$, then
$$\widetilde{\varphi(A)}=\{q\in Q\ |\ q\widetilde{\in}\varphi(A)\},$$
$$\widetilde{\varphi^{-1}(B)}=\{p\in P\ |\ \varphi(p)\widetilde{\in}B \}.$$
Now we can state the following useful lemma.
\begin{lemma}Let $\varphi:(P,\mu_1,\eta_1,c_1)\longrightarrow (Q,\mu_2,\eta_2,c_2)$ be an H-homomorphism. Then\\
(i) If $(P',\mu_1',\eta_1',c_1')$ is a sub-H-group of $P$, then
$\widetilde{\varphi(P')}$ is a saturated sub-H-group of $Q$;\\
(ii) If $(Q',\mu_2',\eta_2',c_2')$ is a sub-H-group of $Q$, then
$\widetilde{\varphi^{-1}(Q')}$ is a sub-H-group of P. If $Q'$ is
normal, then so is $\widetilde{\varphi^{-1}(Q')}$.
\end{lemma}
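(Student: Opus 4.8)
The plan is to verify the three sub-H-group requirements (saturation, closure under the inherited multiplication, and closure under the inherited inversion) for each of the two saturations, and then invoke Theorem 3.11, which says precisely that a saturated subset of $P$ closed under multiplication and inversion is a sub-H-group. This reduces the topological work of checking that the inclusion is monic (the content of Theorem 2.7) to purely path-theoretic verifications, exactly as was done for $\ker\varphi$ in Proposition 4.2.

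For part (i), I would first observe that $\wt{\varphi(P')}$ is saturated essentially by construction: its definition $\{q\in Q\mid q\hin\varphi(P')\}$ already closes up under the relation $\hin$, so any point path-connected to an element of $\wt{\varphi(P')}$ again lies in it. For closure under multiplication, take $q_1,q_2\in\wt{\varphi(P')}$ with paths witnessing $q_i\hin\varphi(P')$, say with endpoints $\varphi(p_i')$ for $p_i'\in P'$. Applying $\mu_2$ to the pair of paths gives a path from $\mu_2(q_1,q_2)$ to $\mu_2(\varphi(p_1'),\varphi(p_2'))$; then the H-homomorphism relation $\mu_2\circ(\varphi\times\varphi)\simeq\varphi\circ\mu_1$ together with Theorem 2.6(i) (which lets me write $\mu_1(p_1',p_2')$ as an element homotopic into $P'$) produces a path into $\varphi(P')$, so the product homotopically belongs to $\varphi(P')$. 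This is the same maneuver used in Proposition 4.2. Closure under inversion is analogous, using $\varphi\circ\eta_1\simeq\eta_2\circ\varphi$ and Theorem 2.6(iii). With saturation plus the two closure properties in hand, Theorem 3.11 yields that $\wt{\varphi(P')}$ is a sub-H-group.

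For part (ii), saturation of $\wt{\varphi^{-1}(Q')}=\{p\in P\mid\varphi(p)\hin Q'\}$ follows because $\varphi$ is continuous: a path in $P$ pushes forward to a path in $Q$, so if $p$ lies in the set and $p'$ is path-connected to $p$, then $\varphi(p')$ is path-connected to $\varphi(p)$ and hence still homotopically belongs to $Q'$. Closure under multiplication and inversion again rides on the two H-homomorphism relations: if $\varphi(p_1),\varphi(p_2)\hin Q'$, then $\mu_2(\varphi(p_1),\varphi(p_2))\hin Q'$ by closure of $Q'$ under its own multiplication (Theorem 2.6(i) applied to $Q'$), and this is homotopic to $\varphi(\mu_1(p_1,p_2))$, giving $p_1.p_2\in\wt{\varphi^{-1}(Q')}$; inversion is handled the same way. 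Theorem 3.11 then gives the sub-H-group claim. For the normality addendum, assuming $Q'\unlhd Q$, I would take $p\in P$ and $p'\in\wt{\varphi^{-1}(Q')}$ and show $p.p'.p^{-1}\in\wt{\varphi^{-1}(Q')}$ by transporting $\varphi(p.p'.p^{-1})$, via the multiplication and inverse homotopies, to $\mu_2(\varphi(p),\mu_2(\varphi(p'),\eta_2\varphi(p)))$, which homotopically belongs to $Q'$ precisely because $Q'$ is normal.

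The main obstacle I anticipate is bookkeeping with the relation $\hin$ under $\mu_2$ and $\eta_2$: the H-homomorphism conditions hold only up to homotopy, so at each step I must concatenate the given witnessing paths with the homotopy-tracks coming from $\mu_2\circ(\varphi\times\varphi)\simeq\varphi\circ\mu_1$ and $\varphi\circ\eta_1\simeq\eta_2\circ\varphi$, and confirm the resulting concatenation genuinely lands its endpoint inside $\varphi(P')$ or $Q'$ rather than merely homotopically near it. Since $\hin$ only requires a path ending somewhere in the target subspace, these concatenations suffice, but care is needed to keep endpoints in the literal subsets $P'$ and $Q'$ when invoking Theorem 2.6 and the normality of $Q'$.
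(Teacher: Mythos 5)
Your proposal is correct and follows essentially the same route as the paper, which proves this lemma by the argument of Proposition 4.2: establish saturation and closure under the inherited multiplication and inversion by concatenating witnessing paths with the homotopy tracks from $\mu_2\circ(\varphi\times\varphi)\simeq\varphi\circ\mu_1$ and $\varphi\circ\eta_1\simeq\eta_2\circ\varphi$ (together with Theorem 2.6 to land the endpoints literally in $P'$ or $Q'$), then invoke Theorem 3.11, with normality handled via associativity and the inversion homotopy. Your write-up just makes explicit the bookkeeping the paper leaves implicit.
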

\begin{proof}
By a similar proof of Proposition 4.2 the results hold.
\end{proof}
Suppose that N is a normal sub-H-group of $P$, $\varphi$ is an
H-homomorphism from $P$ to $Q$ and $\pi$ is the natural map from
$P$ to $P/N$. We would like to find an H-homomorphism
$\overline{\varphi}:P/N\longrightarrow Q$ such that $\overline{\varphi}(gN)=\varphi(g)$. But there is no meaning of H-homomorphism for $\overline{\varphi}$ because $P/N$ is not necessarily an H-group. Although we can assume every abstract group as a topological group by discrete topology, but it is prevalent that topology of $P/N$ must be related to the topology of $P$. By using the functor $\pi_0$, we overcome this problem and have some results as follow in the category of groups. In section 5 we will endow $P/N$ by quotient topology induced from $P$ and prove that $P/N$ by this topology is quasitopological group in the sense of \cite{A} and the following results hold in the category of quasitopological group.\\
For canonical map $\pi:P\lo P/N$ let $\ov{\pi}:\pi_0(P)\lo P/N$ by $\overline{\pi}([g])=gN$. Here is the key result.
\begin{theorem}
For any H-homomorphism $\varphi$ whose kernel K contains a normal sub-H-group $N$ of $P$, $\pi_0(\vf)$ can be factored
through $P/N$. In other words, there is a unique
homomorphism $\overline{\pi_0(\varphi)}:P/N\longrightarrow
\pi_0(Q)$,
such that $\overline{\pi_0(\varphi)}\circ \overline{\pi}=\pi_0(\varphi)$, i.e., the following diagram is commutative:
$$\xymatrix{
\pi_0(P) \ar[r]^{\pi_0(\vf)} \ar[d]_{\ov\pi}
& \pi_0(Q)  \\
\fr{P}{N}. \ar@{-->}[ur]_{\ov{\pi_0(\vf)}}  }$$
Furthermore,\\
(i) $\overline{\pi_0(\varphi)}$ is an epimorphism if $\pi_0(\varphi)$ is onto;\\
(ii) $\overline{\pi_0(\varphi)}$ is a monomorphism if and only if $K = \wt{N}$.
\end{theorem}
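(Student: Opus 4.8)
The plan is to define $\overline{\pi_0(\vf)}$ by the only formula compatible with the demanded commutativity, namely $\overline{\pi_0(\vf)}(gN)=[\vf(g)]=\pi_0(\vf)([g])$, and then verify this choice is both forced and legitimate. Since $\ov\pi([g])=gN$ is surjective (each coset $gN$ is $\ov\pi$ of the class $[g]$), any $\psi\colon P/N\lo\pi_0(Q)$ with $\psi\circ\ov\pi=\pi_0(\vf)$ must satisfy $\psi(gN)=\pi_0(\vf)([g])=[\vf(g)]$; this simultaneously pins down the definition and delivers uniqueness for free, while commutativity of the triangle holds by construction.

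The heart of the argument, and the step I expect to be the main obstacle, is well-definedness: I must show $g_1N=g_2N$ forces $[\vf(g_1)]=[\vf(g_2)]$. By Lemma 3.3(iii), $g_1N=g_2N$ gives $g_2^{-1}.g_1\hin N$. Because $N\sub K$ and $K$ is saturated (Proposition 4.2), monotonicity of saturation yields $\wt N\sub\wt K=K$, so $g_2^{-1}.g_1\in K$, i.e. $\vf(g_2^{-1}.g_1)\hin Q_0$. Now I would invoke the two defining homotopies of an H-homomorphism: evaluating $\vf\mu_1\simeq\mu_2(\vf\times\vf)$ at $(g_2^{-1},g_1)$ gives a path from $\vf(g_2^{-1}.g_1)$ to $\vf(g_2^{-1}).\vf(g_1)$, while $\vf\eta_1\simeq\eta_2\vf$ at $g_2$ gives a path from $\vf(g_2^{-1})$ to $\vf(g_2)^{-1}$ which, translated by right multiplication under $\mu_2$, connects $\vf(g_2^{-1}).\vf(g_1)$ to $\vf(g_2)^{-1}.\vf(g_1)$. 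Concatenating, $\vf(g_2)^{-1}.\vf(g_1)\hin Q_0$; since $Q_0$ is an entire path component, this forces membership, so $[\vf(g_2)]^{-1}[\vf(g_1)]=[\vf(g_2)^{-1}.\vf(g_1)]$ is the identity of $\pi_0(Q)$, whence $[\vf(g_1)]=[\vf(g_2)]$.

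With the map secured, the homomorphism property reduces to $\overline{\pi_0(\vf)}\big((g_1N)(g_2N)\big)=[\vf(g_1.g_2)]=[\vf(g_1).\vf(g_2)]=[\vf(g_1)][\vf(g_2)]$, again passing a path between $\vf(g_1.g_2)$ and $\vf(g_1).\vf(g_2)$ via the multiplicative homotopy. For (i), if $\pi_0(\vf)$ is onto then every $[q]\in\pi_0(Q)$ equals $\pi_0(\vf)([g])=\overline{\pi_0(\vf)}(gN)$ for some $g$, so $\overline{\pi_0(\vf)}$ is surjective, hence an epimorphism.

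For (ii) I would first compute the kernel: $gN\in\ker\overline{\pi_0(\vf)}$ iff $[\vf(g)]$ is trivial iff $\vf(g)\hin Q_0$ iff $g\in K$, so $\ker\overline{\pi_0(\vf)}=\{gN\mid g\in K\}$. The identity of $P/N$ is $p_0N=\wt N$ (Theorem 3.19), and $gN=\wt N$ holds iff $g\hin N$, i.e. $g\in\wt N$ (one direction is Lemma 3.15(v), the other follows from Lemma 3.3(iii) since $p_0^{-1}.g$ is path-connected to $g$). Thus $\overline{\pi_0(\vf)}$ is monic iff every $g\in K$ lies in $\wt N$, i.e. $K\sub\wt N$. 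Since $N\sub K$ with $K$ saturated already gives $\wt N\sub K$, the two inclusions combine into $K=\wt N$, establishing both directions of (ii) at once.
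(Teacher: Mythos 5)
Your proof is correct and follows essentially the same route as the paper: the map is forced to be $gN\mapsto[\vf(g)]$, part (i) is read off the commuting triangle, and part (ii) is the same kernel computation combining $\wt N\sub K$ (from $N\sub K$ and saturation of $K$) with triviality of $\ker\ov{\pi_0(\vf)}$. In fact you supply more than the paper does, which silently omits the well-definedness check; your verification that $g_1N=g_2N$ forces $[\vf(g_1)]=[\vf(g_2)]$ via the homotopies $\vf\mu_1\simeq\mu_2(\vf\times\vf)$ and $\vf\eta_1\simeq\eta_2\vf$ is exactly the missing detail and is carried out correctly.
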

\begin{proof}
(i) It follows from commutativity of diagram. \\
(ii) Assume $\overline{\pi_0(\varphi)}$ is monomorphism. Since $K$ is saturated and contains $N$, we have $\wt{N}\sub K$. Let $g\in K$, then $\vf(g)\hin Q_0$ and so $\overline{\pi_0(\varphi)}(gN)=1$. By injectivity of $\overline{\pi_0(\varphi)}$, $gN=\wt{N}$ and therefore $K=\wt{N}$. The converse is trivial.\\
\end{proof}
The factor theorem yields a fundamental result .
\begin{theorem}
(The First H-isomorphism Theorem). If $\varphi:P\longrightarrow Q$ is an H-homomorphism with kernel $K$,
then $\pi_0(\wt{\varphi(P)})$ is isomorphic to $P/K$.
\end{theorem}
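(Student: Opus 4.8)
The plan is to mimic the proof of the classical First Isomorphism Theorem, using the Factor Theorem (Theorem~4.5) as the main engine, after first translating everything into the category of groups via $\pi_0$. Observe that $\wt{\vf(P)}=\wt{\vf(P)}$ is a saturated sub-H-group of $Q$ by Lemma~4.3(i) applied with $P'=P$, so $\pi_0(\wt{\vf(P)})$ is a genuine group and the statement makes sense. The kernel $K=\ker\vf$ is a normal sub-H-group of $P$ by Proposition~4.2, so $P/K$ is a group by Theorem~3.19, and since $K$ is saturated we have $\wt{K}=K$.

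The key step is to produce the candidate isomorphism. First I would apply the Factor Theorem (Theorem~4.5) in the special case $N=K$: since $K$ trivially contains itself as a normal sub-H-group, $\pi_0(\vf)$ factors through a unique homomorphism $\ov{\pi_0(\vf)}:P/K\lo\pi_0(Q)$ with $\ov{\pi_0(\vf)}\circ\ov\pi=\pi_0(\vf)$. By Theorem~4.5(ii), because $N=K$ gives $\wt{N}=\wt{K}=K$, this induced map $\ov{\pi_0(\vf)}$ is a monomorphism. Thus $P/K$ embeds as a subgroup of $\pi_0(Q)$ via $gK\mapsto[\vf(g)]$. It then remains only to identify the image of this monomorphism with $\pi_0(\wt{\vf(P)})$.

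The main obstacle, and the step requiring genuine care, is the surjectivity onto the right target: I must check that the image of $\ov{\pi_0(\vf)}$ is exactly $\pi_0(\wt{\vf(P)})$ as a subgroup of $\pi_0(Q)$, rather than all of $\pi_0(Q)$. The image of $\ov{\pi_0(\vf)}$ consists of all classes $[\vf(g)]$ for $g\in P$, i.e.\ the path-components of $Q$ that meet $\vf(P)$; this is precisely $\pi_0(\wt{\vf(P)})$, since $q\hin\vf(P)$ means $q$ lies in the same path component as some $\vf(g)$. Here one uses that $\wt{\vf(P)}$ is saturated, so its path components are exactly those components of $Q$ containing a point of $\vf(P)$, and that $\pi_0$ of a saturated sub-H-group injects into $\pi_0(Q)$ by Proposition~3.9. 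Corestricting the monomorphism $\ov{\pi_0(\vf)}$ to its image therefore yields an isomorphism $P/K\cong\pi_0(\wt{\vf(P)})$.

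In summary, the argument runs: (1) note $\wt{\vf(P)}$ is a saturated sub-H-group and $K\unlhd P$ with $\wt{K}=K$; (2) invoke Theorem~4.5 with $N=K$ to get the factored homomorphism $\ov{\pi_0(\vf)}:P/K\lo\pi_0(Q)$, which is injective by part (ii); (3) compute its image as the set of components of $Q$ meeting $\vf(P)$, identify this with $\pi_0(\wt{\vf(P)})$, and conclude that the corestriction is the desired isomorphism. The only subtlety worth spelling out is the image computation in step~(3), which is where saturatedness of $\wt{\vf(P)}$ does the real work.
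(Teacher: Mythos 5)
Your proof is correct and follows essentially the same route as the paper: both construct the map $gK\mapsto[\varphi(g)]$ and verify it is a well-defined injective homomorphism whose image is $\pi_0(\wt{\varphi(P)})$. The only cosmetic difference is that you obtain well-definedness and injectivity by invoking the Factor Theorem with $N=K$ (which is exactly how the paper frames the result, even though its written proof checks these properties directly), and the surjectivity/image computation is the same in both.
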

\begin{proof}
Consider $\theta:P/K\lo \pi_0(\wt{\vf(P)})$ by $\theta (gK)=[\vf(g)]$. Since $\vf$ is an H-homomorphism, $\theta$ is well defined and homomorphism. For $x=[q]\in\pi_0(\wt{\varphi(P)})$ there exist $p\in P$ such that $q\hin \{\vf(p)\}$ and $\theta(pK)=[\vf(p)]=[q]=x$. Hence $\theta$ is onto. Also, if $\theta(gK)=[\vf(g)]=1$, then $\vf(g)\in Q_0$ and $\theta$ is injective.
\end{proof}
 If $M$ and $N$ are saturated sub-H-groups of $P$, $G_1=\pi_0(M)$ and $G_2=\pi_0(N)$, then by using Theorem 3.13, $M\cap N$  is $P_{G_1\cap G_2}$ that is a sub-H-group of $P$.
\begin{lemma}
Let $M$ and $N$ be saturated sub-H-groups of $P$ and $N\unlhd P$. Then\\
(i) $\wt{MN} =\wt{ NM}$, and $\wt{MN}$ is a sub-H-group of $P$;\\
(ii) N is a normal sub-H-group of $\wt{MN}$;\\
(iii) $M\cap N$ is a normal sub-H-group of $M$.
\end{lemma}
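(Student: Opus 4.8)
The plan is to transport every assertion across the functor $\pi_0$ and the dictionary $P_K\leftrightarrow K$ between saturated sub-H-groups of $P$ and subgroups of $\pi_0(P)$, so that each part becomes an instance of the classical ``diamond'' isomorphism preliminaries in group theory. Throughout I would set $G=\pi_0(P)$, $G_1=\pi_0(M)$, $G_2=\pi_0(N)$, and write $q\colon P\to\pi_0(P)$, $q(g)=[g]$, for the canonical class map, which is a group homomorphism by Proposition 2.3. By Proposition 3.9 both $G_1$ and $G_2$ are subgroups of $G$; since $N$ is a normal saturated sub-H-group, Theorem 3.21(i) gives $G_2\unlhd G$. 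By Corollary 3.13 we have $M=P_{G_1}$ and $N=P_{G_2}$, and the computation in the proof of Proposition 3.14 gives $\pi_0(\wt{MN})=G_1G_2$ (and symmetrically $\pi_0(\wt{NM})=G_2G_1$). The one elementary observation I would isolate first is that a saturated set is the full $q$-preimage of its image: for any $A\sub P$ one has $\wt A=q^{-1}(q(A))$, so two saturated sets with the same image in $\pi_0(P)$ coincide.

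For (i) I would begin with the set equality $\wt{MN}=\wt{NM}$. Because $G_2\unlhd G$, the classical fact $G_1G_2=G_2G_1$ holds in $G$, and this common set is a subgroup. Applying the preimage observation, $\wt{MN}=q^{-1}(G_1G_2)=P_{G_1G_2}$ and $\wt{NM}=q^{-1}(G_2G_1)=P_{G_2G_1}$, and since $G_1G_2=G_2G_1$ these are literally the same subset of $P$. Now $M$ and $N$ are saturated sub-H-groups, so Proposition 3.14 applies with $P'=M$, $P''=N$, and the equality just established forces $\wt{MN}$ to be a sub-H-group of $P$ (alternatively, one reads this off directly from $\wt{MN}=P_{G_1G_2}$ via Theorem 3.12). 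Getting the equality at the level of sets rather than merely at the level of $\pi_0$ is exactly where the saturation hypothesis does its work, and it is the one place I would state carefully.

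For (ii), since $p_0\in M$ we have $N=N\{p_0\}\sub NM\sub\wt{NM}=\wt{MN}$, so $N$ sits inside the H-group $\wt{MN}$. As $N=P_{G_2}$ it is closed under the inherited multiplication and inversion, and it is saturated inside $\wt{MN}$ (each $\wt{MN}$-path-component of a point of $N$ lies inside the ambient $P$-component, hence inside $N$); thus Theorem 3.11, applied with $\wt{MN}$ in the role of the ambient H-group, shows $N$ is a sub-H-group of $\wt{MN}$. Normality is then immediate, since $N\unlhd P$ gives $g.g'.g^{-1}\hin N$ for every $g\in P$ and $g'\in N$, and this holds a fortiori for $g\in\wt{MN}\sub P$. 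For (iii), recall $M\cap N=P_{G_1\cap G_2}$ is already a sub-H-group of $P$ (as noted before the lemma); the same Theorem 3.11 argument, now inside $M$, makes it a sub-H-group of $M$. For normality in $M$ I would take $g\in M$ and $g'\in M\cap N$: from $[g],[g']\in G_1$ we get $[g.g'.g^{-1}]\in G_1$, so $g.g'.g^{-1}\in P_{G_1}=M$, while $N\unlhd P$ gives $g.g'.g^{-1}\hin N$; hence $g.g'.g^{-1}\in M\cap N$, which is precisely $M\cap N\unlhd M$. The only spot requiring genuine care beyond the translation is this bookkeeping in (ii) and (iii), namely confirming that $N$ and $M\cap N$ survive as sub-H-groups when the ambient space shrinks from $P$ to $\wt{MN}$ or $M$; everything else is a routine read-off from the $\pi_0$-dictionary.
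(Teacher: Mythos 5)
Your proof is correct and follows essentially the same strategy as the paper's: reduce everything to classical group theory via the dictionary between saturated sub-H-groups and subgroups of $\pi_0(P)$ (Theorem 3.12, Corollary 3.13), combined with Proposition 3.14 for part (i) and Theorem 3.11 for the sub-H-group claims. The only (harmless) deviations are that you obtain $\wt{MN}=\wt{NM}$ by pulling back $G_1G_2=G_2G_1$ along $q$ rather than via the coset identity $gN=Ng$ of Lemma 3.16, and you verify normality in (ii) directly from the definition instead of through the $\pi_0$-correspondence; your write-up is in fact more explicit than the paper's about why $N$ and $M\cap N$ remain sub-H-groups of the smaller ambient spaces.
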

\begin{proof}
Lemma 3.16, Proposition 3.14 and normality of $N$ imply (i). Since $N$ and $\wt{MN}$ are saturated and $\pi_0(\wt{MN})=\pi_0(M).\pi_0(N)$, $\pi_0(N)$ is a normal sub group of $\pi_0(M).\pi_0(N)$, by Corollary 3.13 and (i), which implies $N$ is a normal sub-H-group of $\wt{MN}$. The proof of (iii) is similar to (ii).
\end{proof}
\begin{theorem}
(The Second H-isomorphism Theorem). If $M$ and $N$ are saturated sub-H-groups of $P$ and $N\unlhd P$, then $$M/{M\cap N}\cong {\wt{MN}}/{N}.$$
\end{theorem}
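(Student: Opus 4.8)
The plan is to reduce the entire statement, via the functor $\pi_0$, to the classical Second Isomorphism Theorem for abstract groups, exactly in the spirit in which the First H-isomorphism Theorem was handled. The one genuinely new ingredient I need is a \emph{translation lemma}: for any saturated normal sub-H-group $N$ of an H-group $R$, the coset quotient satisfies $R/N\cong \pi_0(R)/\pi_0(N)$. To prove it I would use the map $\ov\pi:\pi_0(R)\lo R/N$, $\ov\pi([g])=gN$, already introduced before Theorem 4.5. Since $R/N$ is a group under $(g_1N)(g_2N)=(g_1.g_2)N$ by Theorem 3.19, $\ov\pi$ is a surjective homomorphism, and its kernel is $\{[g]\ |\ gN=\wt N\}$. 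As $gN=\wt N$ holds exactly when $g\hin N$, and as saturation of $N$ makes $g\hin N$ equivalent to $[g]\in\pi_0(N)$, the kernel is precisely $\pi_0(N)$. The first isomorphism theorem for groups then yields $R/N\cong\pi_0(R)/\pi_0(N)$.

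With this in hand, write $G_1=\pi_0(M)$ and $G_2=\pi_0(N)$. First I would assemble the hypotheses from results already proved: $\wt{MN}$ is a sub-H-group of $P$ and $N$ is a saturated normal sub-H-group of it, while $M\cap N$ is a normal sub-H-group of $M$ (Lemma 4.7(i)--(iii)); moreover $M\cap N=P_{G_1\cap G_2}$ is saturated (the paragraph preceding Lemma 4.7 together with Corollary 3.13). Applying the translation lemma to the pair $(M,\,M\cap N)$, and using $\pi_0(M\cap N)=G_1\cap G_2$ (from $M\cap N=P_{G_1\cap G_2}$ and Theorem 3.12), gives
$$M/(M\cap N)\cong G_1/(G_1\cap G_2).$$
Applying it to the pair $(\wt{MN},\,N)$, and using $\pi_0(\wt{MN})=G_1G_2$ (the computation $\pi_0(\wt{P'P''})=\pi_0(P')\pi_0(P'')$ from the proof of Proposition 3.14), gives
$$\wt{MN}/N\cong (G_1G_2)/G_2.$$

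Finally I would invoke the classical Second Isomorphism Theorem inside the group $\pi_0(P)$: since $N$ is saturated and normal in $P$, Theorem 3.22(i) makes $G_2=\pi_0(N)$ normal in $\pi_0(P)$, hence normal in the subgroup $G_1G_2$, so that $G_1/(G_1\cap G_2)\cong (G_1G_2)/G_2$. Chaining the three isomorphisms produces $M/(M\cap N)\cong \wt{MN}/N$, as claimed.

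I expect the main obstacle to lie entirely in the translation lemma and in the two supporting $\pi_0$-computations $\pi_0(M\cap N)=G_1\cap G_2$ and $\pi_0(\wt{MN})=G_1G_2$; these are precisely the points where saturation is indispensable, since without it the path components of the sub-H-groups need not coincide with those of $P$ and the identifications of kernels and images collapse. Once the bookkeeping confirms that every hypothesis of the translation lemma and of Theorem 3.22(i) is met, the abstract Second Isomorphism Theorem finishes the argument with no further homotopy-theoretic input.
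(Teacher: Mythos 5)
Your argument is correct, but it takes a genuinely different route from the paper. The paper proves the theorem directly: it defines $\theta:M/(M\cap N)\lo \wt{MN}/N$ by $\theta(g(M\cap N))=gN$, checks well-definedness, gets injectivity from saturation of $N$ (if $gN=\wt{N}$ then $g\hin N$, hence $g\in N$, hence $g\in M\cap N$), and gets surjectivity by picking, for $g\in\wt{MN}$, an element $m.n\in MN$ path-connected to $g$ and observing $mN=(m.n)N=gN$. You instead push everything through $\pi_0$: your translation lemma $R/N\cong\pi_0(R)/\pi_0(N)$ (proved via $\ov{\pi}$ and the first isomorphism theorem for groups, with the kernel identification $gN=\wt{N}\Leftrightarrow g\hin N\Leftrightarrow[g]\in\pi_0(N)$ resting on saturation) converts both sides into quotients of subgroups of $\pi_0(P)$, and the classical second isomorphism theorem applied to $G_1=\pi_0(M)$ and $G_2=\pi_0(N)$ finishes it. All the supporting facts you invoke ($M\cap N=P_{G_1\cap G_2}$, $\pi_0(\wt{MN})=G_1G_2$, Lemma 4.7, Theorem 3.22(i)) are indeed available in the paper, so there is no gap. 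What your approach buys is a single reusable reduction: the same translation lemma immediately yields the first and third H-isomorphism theorems and makes explicit the guiding principle that saturated quotients of H-groups are controlled entirely by $\pi_0$; it is essentially Lemma 3.22 ($\pi_0(P)\cong P/P_0$) upgraded to arbitrary saturated normal sub-H-groups. What the paper's direct construction buys is brevity and independence from the abstract-group theorems, and it exhibits the isomorphism concretely rather than as a composite of three maps.
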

\begin{proof}
Define $\theta:{M}/{M\cap N}\lo {\wt{MN}}/{N}$ by $\theta(g(M\cap N))=gN$. Obviously $\theta$ is a well defined homomorphism.
If $\theta(g(M\cap N))=gN=1$, then $g\hin N$ (equivalently $g\in N$ since $N$ is saturated) and hence $\theta$ is a monomorphism.
Assume $gN\in \wt{MN}$. By definition of $\wt{MN}$, there exist $mn\in MN$ such that $g\hin \{mn\}$. Hence $\theta(m(M\cap N))=mN=mnN=gN$ which implies $\theta$ is an epimorphism.
\end{proof}
\begin{theorem}
(The Third H-isomorphism Theorem). If $N$ and $M$ are normal saturated sub-H-groups of $P$ and $N$ is contained in $M$, then $${P}/{M}\cong \frac{{P}/{N}}{{M}/{N}}.$$
\end{theorem}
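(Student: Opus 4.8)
The plan is to mimic the classical proof of the Third Isomorphism Theorem, transporting everything through the functor $\pi_0$ so that we are ultimately working with ordinary groups. The statement asserts that $P/M\cong \frac{P/N}{M/N}$ when $N\unlhd P$ and $M\unlhd P$ are normal saturated sub-H-groups with $N\sub M$. First I would note that all three quotients appearing here are genuine groups: since $N$ and $M$ are normal saturated sub-H-groups of $P$ with $N\sub M$, Theorem 3.22(iv) guarantees that $M/N$ is a normal subgroup of $P/N$, so the iterated quotient $\frac{P/N}{M/N}$ makes sense as a quotient of groups.

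The cleanest route is to define a map directly on cosets. I would set $\theta:P/N\lo P/M$ by $\theta(gN)=gM$. Because $N\sub M$, the relation $g_1\stackrel{N}{\sim}g_2$ implies $g_1\stackrel{M}{\sim}g_2$ (this is exactly the content of Lemma 3.21 with $P''=N$ and $P'=M$), so $\theta$ is well defined; and it is a homomorphism since $\theta((g_1N)(g_2N))=\theta((g_1.g_2)N)=(g_1.g_2)M=(g_1M)(g_2M)$ using the group operation on quotients from Theorem 3.19. Every coset $gM$ is clearly hit by $gN$, so $\theta$ is onto. The crucial step is computing $\ker\theta$: we have $gN\in\ker\theta$ iff $gM=\wt{M}$, i.e. iff $g\hin M$, i.e. iff $gN\in M/N$. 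Hence $\ker\theta=M/N$.

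Having identified $\theta$ as a surjective group homomorphism $P/N\lo P/M$ with kernel exactly $M/N$, the ordinary First Isomorphism Theorem for groups yields $\frac{P/N}{M/N}=\frac{P/N}{\ker\theta}\cong P/M$, which is the desired conclusion. Note that here I am invoking the classical group-theoretic isomorphism theorem rather than the First H-isomorphism Theorem (Theorem 4.6), since $\theta$ is already a homomorphism of honest groups and no H-space machinery is needed at this final stage.

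The main obstacle, and the only place requiring genuine care, is the well-definedness of $\theta$ together with the verification that $\ker\theta=M/N$; both hinge on correctly handling the homotopy relation $\hin$ rather than strict membership, and on using saturatedness to pass between $g\hin M$ and $g\in M$. Everything else is formal. One subtlety worth flagging is that, because cosets are taken with respect to $\hin$, the identity of $P/M$ is the saturation $\wt{M}$ (as recorded after Theorem 3.19 and in Corollary 3.20), so the kernel condition $gM=\wt{M}$ must be read in that sense; but since $M$ is assumed saturated, $\wt{M}=M$ and $g\hin M\Leftrightarrow g\in M$, which keeps the bookkeeping clean.
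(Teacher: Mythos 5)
Your proof is correct and follows exactly the same route as the paper: define $\theta\colon P/N\to P/M$ by $\theta(gN)=gM$, check it is a well-defined epimorphism of groups with kernel $M/N$, and conclude by the classical first isomorphism theorem. (The paper compresses this into one line; you merely fill in the well-definedness and kernel computations --- note only that your internal citations are shifted by one, as the results you invoke are Theorem 3.18, Lemma 3.20, Corollary 3.19 and Theorem 3.21(iv) rather than 3.19, 3.21, 3.20 and 3.22(iv).)
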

\begin{proof}
Define $\theta:{P}/{N}\lo{P}/{M}$ by $\theta(gN)=gM$ which is an epimorphism with kernel ${M}/{N}$.
\end{proof}
Now suppose that $N$ is a normal sub-H-group of $P$. If $M$ is a saturated sub-H-group of $P$ containing
N, there is a natural analogue of $M$ in the quotient H-group ${P}/{N}$, namely the subgroup ${M}/{N}$ .
In fact we can make this correspondence precisely. Let $\Psi$ be a map from the set of
saturated sub-H-groups of $P$ containing $N$ to the set of subgroups of ${P}/{N}$ by $\Psi(M)={M}/{N}$. We
claim that $\Psi$ is a bijection. For, if ${M_1}/{N}={M_2}/{N}$, then for any $m_1\in M_1$, we have $m_1N=m_2N$,
for some $m_2\in M_2$, so that $m_2^{-1}m_1\hin N$ which is contained in $M_2$. Thus $M_1\subseteq M_2$, and by
symmetry the reverse inclusion holds, so that $M_1=M_2$ and $\Psi$ is injective. Now if $G$ is a
subgroup of $\frac{P}{N}$ and $\pi:P\longrightarrow P/N$ is canonical, then
$${\pi^{-1}(G)}=\{p\in P\ |\ pN\in G\}$$ is a saturated sub-H-group of $P$ containing $N$, and $\Psi({\pi^{-1}(G)})=\{pN\ |\ pN\in G\}=G$
proving surjectivity of $\Psi$.
The map $\Psi$ has a number of other interesting properties, summarized in the following
result.
\begin{theorem}
(The Correspondence Theorem). If $N$ is a normal sub-H-group of $P$, then the above map $\Psi$ sets up a one-to-one correspondence between saturated sub-H-groups of $P$ containing $N$ and
subgroups of ${P}/{N}$. The inverse of $\Psi$ is the map $\Phi:G\mapsto {\pi^{-1}(G)}$, where $\pi$ is the canonical
map of $P$ to ${P}/{N}$. Furthermore,\\
(i) $M_1$ is a sub-H-group of $M_2$ if and only if ${M_1}/{N}\leq {M_2}/{N}$ , and in this case we have
$$[M_2 : M_1 ]=[{M_2}/{N} : {M_1}/{N}];$$
(ii) If $M$ is a normal sub-H-group of $P$, then ${M}/{N}$ is a normal subgroup of ${P}/{N}$;\\
(iii) $M_1$ is a normal sub-H-group of $M_2$ if and only if ${M_1}/{N}$ is a normal subgroup of ${M_2}/{N}$ , and
in this case,$${M_2}/{M_1}\cong \frac{{M_2}/{N}}{{M_1}/{N}}.$$
\end{theorem}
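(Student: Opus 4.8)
The plan is to transport the whole statement across the functor $\pi_0$ and deduce it from the classical correspondence (lattice isomorphism) theorem for the abstract group $\pi_0(P)$ and its normal subgroup $\pi_0(N)$. Since $P/N\cong P/\wt N$ by Corollary 3.21, I first reduce to the case where $N$ is saturated, so that $\pi_0(N)$ may be regarded as an honest subgroup of $\pi_0(P)$. The dictionary I will use is the bijection $M\mapsto\pi_0(M)$ between saturated sub-H-groups of $P$ and subgroups of $\pi_0(P)$, with inverse $K\mapsto P_K$, furnished by Theorem 3.12 and Corollary 3.13; under it a saturated sub-H-group contains $N$ exactly when its image contains $\pi_0(N)$. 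The bijectivity of $\Psi$ and the identity $\Phi=\Psi^{-1}$ having already been checked in the paragraph preceding the statement, only the three numbered assertions remain.

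The second task is to identify $P/N$ group-theoretically. The map $\ov\pi:\pi_0(P)\lo P/N$, $\ov\pi([g])=gN$, is a surjective homomorphism, and $[g]$ lies in its kernel precisely when $g\hin N$; since $N$ is saturated this means $g\in N$, i.e. $[g]\in\pi_0(N)$. Hence $\ker\ov\pi=\pi_0(N)$ and $P/N\cong\pi_0(P)/\pi_0(N)$, under which $M/N$ corresponds to $\pi_0(M)/\pi_0(N)$. The same bookkeeping inside $M_2$ shows that $g_1M_1=g_2M_1\Leftrightarrow[g_2]^{-1}[g_1]\in\pi_0(M_1)$, by Lemma 3.3(iii) and saturation of $M_1$, so that $gM_1\mapsto[g]\pi_0(M_1)$ is a bijection of coset sets and $[M_2:M_1]=[\pi_0(M_2):\pi_0(M_1)]$.

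With this dictionary the numbered claims become transports of elementary group facts. For (i), $M_1\sub M_2$ is equivalent to $\pi_0(M_1)\le\pi_0(M_2)$, hence to $\pi_0(M_1)/\pi_0(N)\le\pi_0(M_2)/\pi_0(N)$, i.e. to $M_1/N\le M_2/N$; the index formula then follows by chaining the coset bijection above with the group identity $[\pi_0(M_2)/\pi_0(N):\pi_0(M_1)/\pi_0(N)]=[\pi_0(M_2):\pi_0(M_1)]$. For (ii) I will compute directly in the group $P/N$ that $(gN)(mN)(gN)^{-1}=(g.m.g^{-1})N$, which lies in $M/N$ because normality of $M$ gives $g.m.g^{-1}\hin M$ and saturation upgrades this to $g.m.g^{-1}\in M$. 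For (iii), the forward implication is (ii) applied with ambient H-group $M_2$ in place of $P$, the converse is the transport of the group-theoretic fact that normality descends along the correspondence, and the displayed isomorphism $M_2/M_1\cong(M_2/N)/(M_1/N)$ is exactly the Third H-isomorphism Theorem (Theorem 4.9) applied to the chain $N\sub M_1\sub M_2$ inside $M_2$; this is legitimate because $N$ is normal in $M_2$ by Theorem 3.22(iii) and saturated, while $M_1$ is normal and saturated in $M_2$.

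The step I expect to be the main obstacle is not any single identity but the uniform bookkeeping of ``homotopically belongs'' versus genuine membership: every coset identity rests on Lemma 3.3(iii), and each time I want to place a point inside a sub-H-group I must invoke saturation to pass from $\hin$ to $\in$. The one genuinely delicate point is the computation of $\ker\ov\pi$: I must make sure it is exactly $\pi_0(N)$, and not something larger arising from paths that leave $N$, which is precisely where saturation of $N$ (guaranteeing that $\pi_0(N)$ injects into $\pi_0(P)$ as the set of path components lying in $N$) is indispensable. Once this is secured, the remaining content is purely group theory.
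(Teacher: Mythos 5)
Your proposal is correct, and it is essentially the paper's own (largely implicit) argument made explicit: the paper proves only the bijectivity of $\Psi$ in the paragraph preceding the statement and leaves (i)--(iii) unproved, while your transport through $\pi_0$ via the dictionary $M\mapsto\pi_0(M)$, $K\mapsto P_K$ of Theorem 3.12 and Corollary 3.13 is exactly the technique the paper uses for the neighbouring results (e.g.\ Lemma 4.7 and Proposition 3.14), with $\ov\pi$ realizing $P/N\cong\pi_0(P)/\pi_0(N)$. The only details you leave tacit --- that $\wt{N}$ is itself a normal saturated sub-H-group (needed for your reduction) and that cosets of $N$ computed inside $M_2$ may be identified with those computed in $P$ when invoking Theorem 4.9 --- are routine consequences of Lemma 3.3, Theorem 3.11 and Lemma 3.15(ii).
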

We introduced monics, epics and H-homomorphisms in
$hTop_*$ in section one. Now we define H-morphisms.
\begin{definition}
(i) An {\it H-monomorphism} is a monic H-homomorphism. \\
(ii) An {\it H-epimorphism} is an epic H-epimorphism.\\
(iii) An {\it H-endomorphism} is an H-homomorphism of an H-group to itself.\\
(iv) An {\it H-automorphism} is an H-isomorphism of an H-group to itself.
\end{definition}
Now we introduce a family of H-isomorphisms that makes a group.\\
\begin{proposition}
Let $\varphi_a:P\longrightarrow P$ given by $\varphi_a(g)=a.(g.a^{-1})$, then for each $a\in P$, $\vf_a$ is an H-isomorphism .
\end{proposition}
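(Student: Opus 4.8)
The plan is to verify three things and then invoke the Definition of H-isomorphism: that $\varphi_a$ is continuous, that it is an H-homomorphism, and that $\psi:=\varphi_{a^{-1}}$, given by $\psi(g)=a^{-1}.(g.a)$, is a two-sided homotopy inverse which is itself an H-homomorphism. Continuity of $\varphi_a$ is immediate, since it is obtained by composing $\mu$ with the continuous maps $g\mapsto(g,a^{-1})$ and $x\mapsto(a,x)$ (each pairing $1_P$ with a constant) and the fixed point $a^{-1}=\eta(a)$; the same applies to $\psi$. Note also that $\varphi_a(p_0)=a.(p_0.a^{-1})\simeq a.a^{-1}\simeq p_0$, so the basepoint is preserved up to homotopy and we may work throughout with pointed homotopies. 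I will freely use the H-group laws $g.p_0\simeq g\simeq p_0.g$, $g.g^{-1}\simeq p_0\simeq g^{-1}.g$ and $(g_1.g_2).g_3\simeq g_1.(g_2.g_3)$; since composition of maps respects homotopy, any chain of such rewritings carried out uniformly in the variables yields an honest homotopy of the corresponding composite maps.

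First I would verify the multiplicative condition $\varphi_a\mu\simeq\mu(\varphi_a\times\varphi_a)$. On the one side $\varphi_a\mu(g_1,g_2)=a.((g_1.g_2).a^{-1})$, and on the other $\mu(\varphi_a\times\varphi_a)(g_1,g_2)=(a.(g_1.a^{-1})).(a.(g_2.a^{-1}))$. Reassociating the latter and cancelling the inner factor $a^{-1}.a\simeq p_0$ gives $\simeq a.(g_1.(g_2.a^{-1}))\simeq a.((g_1.g_2).a^{-1})$, and these rewritings, being uniform in $(g_1,g_2)$, assemble into a homotopy of maps $P\times P\to P$.

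The main obstacle is the inverse condition $\varphi_a\eta\simeq\eta\varphi_a$, that is $a.(g^{-1}.a^{-1})\simeq(a.(g.a^{-1}))^{-1}$; here a direct cancellation is unavailable, and I would instead appeal to the uniqueness up to pointed homotopy of homotopy inverses in an H-group (standard for homotopy-associative H-spaces with a homotopy inverse, cf.\ \cite{D}). Computing $(a.(g.a^{-1})).(a.(g^{-1}.a^{-1}))$ by reassociation and repeated cancellation of $a^{-1}.a$, $g.g^{-1}$ and $a.a^{-1}$ yields $\simeq p_0$, and symmetrically for the product in the other order; thus $a.(g^{-1}.a^{-1})$ is a homotopy inverse of $\varphi_a(g)=a.(g.a^{-1})$ and hence homotopic to $(a.(g.a^{-1}))^{-1}=\eta\varphi_a(g)$, uniformly in $g$. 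This shows $\varphi_a$ is an H-homomorphism, and running the same two arguments with $a^{-1}$ in place of $a$ shows that $\psi=\varphi_{a^{-1}}$ is one as well.

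Finally I would confirm the inverse relations. By associativity and cancellation, $\varphi_a(\psi(g))=a.((a^{-1}.(g.a)).a^{-1})\simeq(a.a^{-1}).(g.(a.a^{-1}))\simeq g$, and symmetrically $\psi(\varphi_a(g))\simeq g$, both uniformly in $g$, so that $\varphi_a\circ\psi\simeq 1_P$ and $\psi\circ\varphi_a\simeq 1_P$. Together with the H-homomorphism property this exhibits $\psi$ as an H-homomorphism inverse of $\varphi_a$, whence $\varphi_a$ is an H-isomorphism. The one step requiring more than formal cancellation is the inverse condition, which is why the uniqueness of homotopy inverses is the crux.
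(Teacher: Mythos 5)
Your proof is correct, and on the multiplicative condition it travels the same road as the paper: both arguments reduce $\mu(\vf_a\times\vf_a)(g_1,g_2)=(a.(g_1.a^{-1})).(a.(g_2.a^{-1}))$ to $a.((g_1.g_2).a^{-1})$ by reassociating and cancelling the inner $a^{-1}.a$ against $p_0$; the paper merely records the resulting homotopy explicitly as a four-stage concatenation $F$ built from the structure homotopies $H_1,\dots,H_4$, where you stay at the level of uniform rewriting (which does assemble into an honest homotopy of maps, since every step is a composite of $\mu$ with fixed structure homotopies). Where you genuinely diverge is the condition $\vf_a\circ\eta\simeq\eta\circ\vf_a$. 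The paper dismisses it with ``similarly,'' but a computation literally parallel to the multiplicative one would want to expand $\eta(a.(g.a^{-1}))$ using $\eta\circ\mu\simeq\mu(\eta\times\eta)\circ T$ --- a property the authors themselves admit, in a remark later in Section~4, they cannot establish for an arbitrary H-group. Your detour through uniqueness of homotopy inverses sidesteps this: you check that $g\mapsto a.(g^{-1}.a^{-1})$, which is exactly $\vf_a\circ\eta$, is a two-sided homotopy inverse of $\vf_a$ uniformly in $g$, and since $\eta\circ\vf_a$ is another (as $\mu(\eta,1_P)\simeq c$), the standard $(y.x).z\simeq y.(x.z)$ juggling identifies the two up to homotopy. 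That is a more complete justification of this step than the paper offers, and it uses only the H-group axioms. The final verification $\vf_a\circ\vf_{a^{-1}}\simeq 1_P\simeq\vf_{a^{-1}}\circ\vf_a$ is identical in both treatments. The one soft spot, which you share with the paper but at least flag explicitly, is that $\vf_a$ is pointed only up to homotopy ($\vf_a(p_0)\simeq p_0$ but in general $\vf_a(p_0)\neq p_0$), whereas the ambient conventions ask for pointed maps and pointed homotopies.
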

\begin{proof}
First we show that $\vf_a$ is an H-homomorphism. Let $c$ be homotopic to $\mu(1,\eta)$ by $H_1$ and $H_2$ is the homotopy
between $1_P$ and $\mu(1,p_0)$. \\
Let $\xi_1,\ \xi_2,\ \zeta_1,\ \zeta_2:P\times P\lo P$ by $$\xi_1(g,g')=a.(((g.p_o).g').a^{-1}),\ \ \ \ \ \ \ \  \xi_2(g,g')=(a.g).(p_0.(g'.a^{-1})),$$ $$\zeta_1(g,g')=(a.g).((a.a^{-1}).(g'.a^{-1})), \ \ \ \ \zeta_2(g,g')=(a.g.a^{-1}).(a.g'.a^{-1}).$$ By associativity of $\mu$, there exist homotopies $H_3,\ H_4$ such that  $\xi_1\simeq\xi_2$ by $H_3$ and $\zeta_1\simeq\zeta_2$ by $H_4$. Define $F:P\times P\times I\lo P$ by\\
\begin{displaymath}
{F}(g,g',t)= \left\{
\begin{array}{lr}
a.((H_2(g,2t).g').a^{-1})      &       0\leq t\leq 1/2,         \\
H_3(g,g',4t-2)                   &       1/2\leq t\leq 3/4,        \\
(a.g).(H_1(a,8t-6).(g'.a^{-1}))  &       3/4\leq t\leq 7/8,         \\
H_4(g,g',8t-7)                   &        7/8\leq t\leq 1.
\end{array}
\right.
\end{displaymath}
that $F(g,g',0)=a.((g.g').a^{-1})$ and $F(g,g',1)=(a.g.a^{-1}).(a.g'.a^{-1})$. Therefore $\vf_a\circ\mu\simeq \mu\circ(\vf_a\times\vf_a)$. Similarly
$\eta\circ\vf_a\simeq\vf_a\circ\eta$ which implies $\vf_a$ is an H-homomorphism. Associativity of $\mu$ implies that $\vf_a\circ\vf_{a^{-1}}\simeq 1$ and $\vf_{a^{-1}}\circ\vf_a\simeq 1$. Therefore $\vf_a$ is an H-isomorphism for every $a\in P$.
\end{proof}
\begin{xrem}
Note that $a.(g.a^{-1})$ and $(a.g).a^{-1}$ are different and $\vf_a(g)$ can not be shown by $a.g.a^{-1}$. Hence we define $\vf^a:P\longrightarrow P$ by $\vf^a(g)=(a.g).a^{-1}$ which is homotopic to $\vf_a$ by associativity.
\end{xrem}
\begin{definition}
We call the H-isomorphism $\vf_a$ introduced above an H-inner automorphism of $P$.
\end{definition}
As in Algebra we expect the equality $\vf_a\circ\vf_b=\vf_{a.b}$, but
$$\vf_a\circ\vf_b(g)=a.([b.(g.b^{-1})].a^{-1})$$ $$\vf_{a.b}(g)=(a.b).[g.(a.b)^{-1}].$$
This shows that $\vf_a\circ\vf_b\neq\vf_{a.b}$. If $\eta\mu\simeq\mu(\eta\times\eta)\circ T$, where $T:P\times P\lo P\times P$ by $T(x,y)=(y,x)$, then we have $\vf_a\circ\vf_b\simeq\vf_{a.b}$ by using associativity. It seems that for making a group of H-inner automorphism by composition as binary operation, we must work by classes of maps each of which are homotopic with $\vf_a$, $a\in P$ .
\begin{lemma}
If $a,b\in P$ are in the same path component, then $\vf_a\simeq\vf_b$ .
\end{lemma}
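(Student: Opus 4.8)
The plan is to promote a path from $a$ to $b$ into a homotopy between the two conjugation maps. Since $a$ and $b$ lie in the same path component of $P$, I would fix a path $\gamma:I\lo P$ with $\gamma(0)=a$ and $\gamma(1)=b$, and then, using the abbreviations of Section 3, define
$$H:P\times I\lo P,\qquad H(g,t)=\gamma(t).(g.\gamma(t)^{-1})=\mu(\gamma(t),\mu(g,\eta(\gamma(t)))).$$
At the endpoints this restricts to $H(g,0)=a.(g.a^{-1})=\vf_a(g)$ and $H(g,1)=b.(g.b^{-1})=\vf_b(g)$, so $H$ is the natural candidate homotopy from $\vf_a$ to $\vf_b$.

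The routine part is continuity, and the cleanest way to see it is to isolate the auxiliary map $\Phi:P\times P\lo P$ given by $\Phi(x,g)=\mu(x,\mu(g,\eta(x)))$, which is continuous as a composite of $\mu$, $\eta$ and projections. Then $\vf_a=\Phi(a,-)$, $\vf_b=\Phi(b,-)$ and $H(g,t)=\Phi(\gamma(t),g)$, so continuity of $H$ follows at once from continuity of $\Phi$ together with that of $\gamma$. This presentation also makes transparent that $\gamma$ enters only through the first coordinate, which is the coordinate along which $\vf_a$ and $\vf_b$ differ.

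The step I expect to be the main obstacle is the basepoint. Because $\vf_a$ and $\vf_b$ are pointed maps and all homotopies in the H-group structure are pointed, I must check that $H$ can be chosen to be a based homotopy, i.e. that the track $t\mapsto H(p_0,t)=\gamma(t).(p_0.\gamma(t)^{-1})$ can be deformed onto the constant path at $p_0$. This need not hold on the nose, since $p_0$ is only a homotopy identity and $\mu(\gamma(t),\mu(p_0,\eta(\gamma(t))))$ genuinely depends on $\gamma(t)$. To repair it I would splice into $H$ the homotopies $\mu(c,1_P)\simeq 1_P$ and $\mu(1_P,\eta)\simeq c$ guaranteed by the axioms, in the same spirit as the four-piece homotopy $F$ built in the proof of Proposition 4.12, thereby pulling the basepoint track back to $p_0$ while keeping the ends $\vf_a$ and $\vf_b$ fixed. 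If instead one reads $\simeq$ throughout as free homotopy, the map $H$ already settles the claim with no correction needed.
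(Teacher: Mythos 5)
Your construction is exactly the paper's proof: the authors take a path $\al$ from $a$ to $b$ and declare $F_t(g)=\al(t).(g.\al(t)^{-1})$ to be the required homotopy, which is your $H$ verbatim. The basepoint correction you flag is a genuine subtlety that the paper silently ignores (its $F$ is only a free homotopy, and indeed $\vf_a$ itself fixes $p_0$ only up to homotopy), so your extra splicing step is a refinement rather than a divergence.
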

\begin{proof}
Let $\al$ be a path from $a$ to $b$. Then $F:P\times I\lo P$ by $F_t(g)=(\al(t)).(g.(\al(t))^{-1})$ is a homotopy between $\vf_a$ and $\vf_b$.
\end{proof}
\begin{definition}
We define the center of an H-group $P$ as follows: $$Z(P)=\{g\in P|\ \mu(g,-)\simeq\mu(-,g)\}.$$
\end{definition}
\begin{theorem}
$Z(P)$ is a normal saturated sub-H-group of $P$.
\end{theorem}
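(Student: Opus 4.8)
The plan is to recast the whole statement in terms of translation maps. For $g\in P$ write $L_g=\mu(g,-)$ and $R_g=\mu(-,g)$ for the self-maps $x\mapsto g.x$ and $x\mapsto x.g$ of $P$, so that by definition $Z(P)=\{g\in P\ |\ L_g\simeq R_g\}$. These translations do not fix the base point (indeed $L_g(p_0)=g.p_0\simeq g$), so I read $\simeq$ between them as free homotopy of maps $P\lo P$; this is harmless, since every homotopy I need is obtained by restricting or evaluating the structure homotopies of $P$. First I would record, as free homotopies of self-maps of $P$, the following dictionary coming directly from the H-group axioms: $L_{g.h}\simeq L_g\circ L_h$, $R_{g.h}\simeq R_h\circ R_g$ and $L_g\circ R_h\simeq R_h\circ L_g$ (all three from homotopy associativity of $\mu$); $L_{p_0}\simeq R_{p_0}\simeq 1_P$ (homotopy identity axiom); if $g,g'$ lie in one path component then $L_g\simeq L_{g'}$ and $R_g\simeq R_{g'}$ (slide a path $\al$ from $g$ to $g'$ through $F_t=\mu(\al(t),-)$, exactly as in the proof that $\vf_a\simeq\vf_b$ for path-connected $a,b$); and, using that $g.g^{-1}$ and $g^{-1}.g$ are path-connected to $p_0$, that $L_{g^{-1}}$ is a two-sided homotopy inverse of $L_g$ and $R_{g^{-1}}$ of $R_g$.

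With this dictionary each required property is short. For saturatedness, if $g\in Z(P)$ and $g'$ lies in the path component of $g$, then $L_{g'}\simeq L_g\simeq R_g\simeq R_{g'}$, so $g'\in Z(P)$; in particular $Z(P)$ is saturated and contains $p_0$. By Theorem 3.11 it then remains only to verify closure under multiplication and inversion. For $g,h\in Z(P)$ I would run the chain
$$L_{g.h}\simeq L_g\circ L_h\simeq L_g\circ R_h\simeq R_h\circ L_g\simeq R_h\circ R_g\simeq R_{g.h},$$
using $L_h\simeq R_h$, then the commutation $L_g\circ R_h\simeq R_h\circ L_g$, then $L_g\simeq R_g$; hence $g.h\in Z(P)$. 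For inversion, since $L_g\simeq R_g$, their homotopy inverses must agree up to homotopy; concretely
$$L_{g^{-1}}\simeq L_{g^{-1}}\circ(R_g\circ R_{g^{-1}})\simeq L_{g^{-1}}\circ(L_g\circ R_{g^{-1}})\simeq (L_{g^{-1}}\circ L_g)\circ R_{g^{-1}}\simeq R_{g^{-1}},$$
so $g^{-1}\in Z(P)$. Thus $Z(P)$ is a saturated sub-H-group by Theorem 3.11.

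For normality I would prove the stronger claim that a conjugate of a central element stays in its path component. Given $g\in P$ and $g'\in Z(P)$, evaluating the homotopy $L_{g'}\simeq R_{g'}$ at the single point $g^{-1}$ yields a path from $g'.g^{-1}$ to $g^{-1}.g'$; left-multiplying this path by $g$ and then applying associativity and the inverse and identity axioms gives a path
$$g.(g'.g^{-1})\ \rightsquigarrow\ g.(g^{-1}.g')\ \rightsquigarrow\ (g.g^{-1}).g'\ \rightsquigarrow\ p_0.g'\ \rightsquigarrow\ g'.$$
Hence $g.g'.g^{-1}$ lies in the path component of $g'\in Z(P)$, so $g.g'.g^{-1}\hin Z(P)$ by the saturatedness established above, which is precisely normality.

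The one delicate point, and the step I would treat most carefully, is inversion: one must check that $L_{g^{-1}}$ and $R_{g^{-1}}$ are genuine two-sided homotopy inverses of $L_g$ and $R_g$ as maps $P\lo P$ (not merely inverses on $\pi_0$), so that the cancellations $R_g\circ R_{g^{-1}}\simeq 1_P$ and $L_{g^{-1}}\circ L_g\simeq 1_P$ used in the chain are honest homotopies of self-maps. Once the translation dictionary of the first paragraph is verified as free homotopies, the rest is routine bookkeeping with associativity.
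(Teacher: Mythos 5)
Your proof is correct, and its skeleton---saturation by sliding translations along a path, closure under multiplication and inversion, then Theorem 3.11---is the same as the paper's. It is, however, substantially more complete than the published proof in two respects. First, the paper's proof never addresses normality at all: it checks closure and saturation, invokes Theorem 3.11, and stops, even though the theorem asserts that $Z(P)$ is a \emph{normal} sub-H-group. Your argument---evaluate the free homotopy $L_{g'}\simeq R_{g'}$ at the single point $g^{-1}$ to obtain a path from $g'.g^{-1}$ to $g^{-1}.g'$, push it forward by $L_g$, and then cancel $g.g^{-1}$ against $p_0$ using associativity and the identity axiom---supplies exactly the missing verification, and it lands in the path component of $g'$ no matter how $g.g'.g^{-1}$ is bracketed, which is all that $g.g'.g^{-1}\hin Z(P)$ requires. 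Second, the paper dismisses closure under inversion with ``similarly,'' but as you rightly flag it is not a mirror image of the multiplication step: it genuinely needs that $L_{g^{-1}}$ and $R_{g^{-1}}$ are two-sided homotopy inverses of $L_g$ and $R_g$ (associativity plus the paths from $g.g^{-1}$ and $g^{-1}.g$ to $p_0$ and the identity axiom), and your sandwich $L_{g^{-1}}\simeq L_{g^{-1}}\circ R_g\circ R_{g^{-1}}\simeq (L_{g^{-1}}\circ L_g)\circ R_{g^{-1}}\simeq R_{g^{-1}}$ is the right argument. Your observation that $\simeq$ in the definition of $Z(P)$ must be read as free homotopy, since the translations are not pointed maps, is likewise a point the paper leaves implicit. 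I see no gaps.
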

\begin{proof}
For $g_1,g_2\in Z(P)$, $g_1.g_2\in Z(P)$ by using associativity. Similarly $Z(P)$ is closed under inversion. Assume $g_1\in Z(P)$, then $L_{g_1}=\mu(g_1,-)\simeq\mu(-,g_1)=R_{g_1}$. If $g_2$ is connected to $g_1$ by a path $\al$, then $L_{g_2}\simeq L_{g_1}$ and $R_{g_2}\simeq R_{g_1}$. Thus $L_{g_2}\simeq R_{g_2}$. Therefore $Z(P)$ is saturated and by Theorem 3.11 is a sub-H-group of $P$.
\end{proof}

\begin{proposition}Let $P$ be an H-group such that satisfies $\eta\circ\mu\simeq\mu(\eta\times\eta)\circ T$, where $T:P\times P\lo P\times P$ by $T(x,y)=(y,x)$. If $a\st{Z(P)}{\sim}b$, then $\vf_a\simeq\vf_b$.
\end{proposition}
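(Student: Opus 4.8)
The plan is to reduce the statement to two ingredients, both available in the excerpt: that the inner automorphism of a \emph{central} element is homotopically trivial, and that, under the standing hypothesis $\eta\mu\simeq\mu(\eta\times\eta)\circ T$, the composition law $\vf_x\circ\vf_y\simeq\vf_{x.y}$ holds (the identity recorded in the discussion following Definition 4.12). The relation $a\st{Z(P)}{\sim}b$ means, by Proposition 3.5 applied to the sub-H-group $Z(P)$ (Theorem 4.15), that $a.b^{-1}\hin Z(P)$, i.e. $a.b^{-1}$ lies in the path component of some genuine central element $z\in Z(P)$.

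First I would prove the auxiliary claim: if $z\in Z(P)$ then $\vf_z\simeq 1_P$. Writing $\vf_z=L_z\circ R_{z^{-1}}$ with $L_z=\mu(z,-)$ and $R_{z^{-1}}=\mu(-,z^{-1})$, the defining centrality $L_z\simeq R_z$ (Definition 4.14) gives, upon precomposing with $R_{z^{-1}}$, that $\vf_z\simeq R_z\circ R_{z^{-1}}$, namely the map $g\mapsto (g.z^{-1}).z$. Associativity of $\mu$ rewrites this up to homotopy as $g\mapsto g.(z^{-1}.z)$; since $z^{-1}.z$ is joined to $p_0$ by a path (as in the proof of Lemma 3.3(i)), sliding along that path and then invoking $\mu(1_P,c)\simeq 1_P$ collapses the map to $1_P$. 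Combining with Lemma 4.13, since $a.b^{-1}$ and $z$ share a path component, I get $\vf_{a.b^{-1}}\simeq\vf_z\simeq 1_P$.

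It remains to assemble these. The point $(a.b^{-1}).b$ is path-connected to $a$: associativity joins it to $a.(b^{-1}.b)$, then $b^{-1}.b$ is path-connected to $p_0$ while $\mu(1_P,c)\simeq 1_P$ returns $a.p_0$ to $a$. Hence Lemma 4.13 gives $\vf_a\simeq\vf_{(a.b^{-1}).b}$. Applying the composition law with $x=a.b^{-1}$ and $y=b$ yields $\vf_{(a.b^{-1}).b}\simeq\vf_{a.b^{-1}}\circ\vf_b\simeq 1_P\circ\vf_b=\vf_b$, and chaining the homotopies gives $\vf_a\simeq\vf_b$.

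The only genuinely new computation, and hence the main obstacle, is the auxiliary claim $\vf_z\simeq 1_P$. The care required there is to remember that centrality $L_z\simeq R_z$ is a \emph{free} (unpointed) homotopy and to verify that each rewriting step — precomposition by $R_{z^{-1}}$, the associativity homotopy, and the slide of $z^{-1}.z$ to $p_0$ — is a legitimate homotopy of maps $P\to P$ rather than a mere pointwise identity. Everything after that is bookkeeping on path components through Lemma 4.13, together with the composition law, which is precisely the place where the anti-homomorphism hypothesis on $\eta$ is consumed.
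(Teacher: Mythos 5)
Your proof is correct, but it takes a genuinely different route from the paper's. The paper realizes the hypothesis as a path $\alpha$ from $a^{-1}.b$ to an element $z\in Z(P)$, notes that $a\alpha$ then joins $b$ to $a.z$, so Lemma 4.13 gives $\vf_b\simeq\vf_{a.z}$, and concludes with a direct manipulation of the formula for $\vf_{a.z}$: the hypothesis $\eta\mu\simeq\mu(\eta\times\eta)\circ T$ trades $(a.z)^{-1}$ for $z^{-1}.a^{-1}$, centrality of $z$ together with associativity slides $z$ past the variable, and the inverse and unit homotopies cancel $z.z^{-1}$, leaving $a.((-).a^{-1})=\vf_a$. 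You instead factor the statement into two pieces: the auxiliary claim that $\vf_z\simeq 1_P$ for genuinely central $z$, which the paper never isolates but which your computation $\vf_z=L_z\circ R_{z^{-1}}\simeq R_z\circ R_{z^{-1}}\simeq 1_P$ establishes correctly, and the composition law $\vf_x\circ\vf_y\simeq\vf_{x.y}$, which the paper records under exactly this hypothesis in the discussion between Definition 4.12 and Lemma 4.13, so you may legitimately quote it. Your decomposition buys reusability: the auxiliary claim says precisely that $Z(P)$ is sent to the identity class by $a\mapsto[\vf_a]$, which is half of the well-definedness of $\Theta$ in the closing theorem $HInn(P)\cong P/Z(P)$ of Section 4, and the hypothesis on $\eta$ is consumed in a single, clearly demarcated place. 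The cost is reliance on the composition law, for which the paper offers only a one-line justification, whereas the paper's computation for this proposition is self-contained. Your cautions --- that centrality is a free rather than pointed homotopy, and that each rewriting must be a homotopy of maps $P\to P$ obtained by restricting the structural homotopies to slices --- are exactly the right ones and match the level of rigor of the paper's own arguments (the homotopy in Lemma 4.13 is itself free).
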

\begin{proof}
If $a\st{Z(P)}{\sim}b$, then there exists a path $\alpha$ from $a^{-1}.b$ to $z\in Z(P)$. Hence $a\alpha$ is a path from $b$ to $a.z$
 which implies $\vf_b\simeq\vf_{a.z}$. Also, $\vf_{a.z}(g)=a.z.(g.(a.z)^{-1})$ shows that $\vf_{a.z}\simeq\vf_a$, since $(a.z).((-).(a.z)^{-1})\simeq$\\ $(a.z).((-).(z^{-1}.a^{-1}))\simeq (a.(-)).(z.(z^{-1}.a^{-1}))\simeq a.((-).a^{-1})$.
\end{proof}
\begin{xrem}
It seems that all properties of topological groups hold up to homotopy for H-groups and therefore the hypothesis $\eta\circ\mu\simeq\mu(\eta\times\eta)\circ T$ holds for every H-group, but authors did not find any proof for this. Note that this equivalence holds for loop spaces and for all CW H-groups, since every CW H-group is as homotopy type of a loop space \cite{S}.
\end{xrem}
Let $Inn(P)=\{\vf_a\ |\ a\in P\} $ and define an equivalence relation on it as follow: $$\vf_a\sim\vf_b\Leftrightarrow \vf_a\simeq\vf_b.$$
Put $HInn(P)=Inn(P)/\sim$ which is precisely $\{[\vf_a]\ |\ a\in P\}$. Then we have
\begin{theorem}Let $P$ be an H-group such that satisfies $\eta\mu\simeq\mu(\eta\times\eta)\circ T$, then
$HInn(P)$ is a group.
\end{theorem}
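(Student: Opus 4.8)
The plan is to make $HInn(P)$ a group under the operation induced by composition of representatives, $[\vf_a]\cdot[\vf_b]=[\vf_a\circ\vf_b]$, and then to check the group axioms at the level of homotopy classes. First I would verify that this rule is well defined: since composition of pointed maps respects the pointed homotopy relation, $\vf_a\simeq\vf_{a'}$ and $\vf_b\simeq\vf_{b'}$ force $\vf_a\circ\vf_b\simeq\vf_{a'}\circ\vf_{b'}$, so the class $[\vf_a\circ\vf_b]$ depends only on $[\vf_a]$ and $[\vf_b]$. The decisive point is closure, i.e.\ that $[\vf_a\circ\vf_b]$ is again of the form $[\vf_c]$; this is exactly where the standing hypothesis $\eta\mu\simeq\mu(\eta\times\eta)\circ T$ is used, for by the computation recorded after Definition 4.12 it gives $\vf_a\circ\vf_b\simeq\vf_{a.b}$, whence $[\vf_a]\cdot[\vf_b]=[\vf_{a.b}]\in HInn(P)$. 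I expect this to be the main obstacle: $Inn(P)$ itself is not closed under composition (the composite is only homotopic to $\vf_{a.b}$, not equal to it, as the same computation shows), so without passing to homotopy classes and without the hypothesis there would be no well-defined multiplication at all.

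Associativity should then come for free, since composition of maps is strictly associative and is compatible with homotopy: $([\vf_a]\cdot[\vf_b])\cdot[\vf_c]=[(\vf_a\circ\vf_b)\circ\vf_c]=[\vf_a\circ(\vf_b\circ\vf_c)]=[\vf_a]\cdot([\vf_b]\cdot[\vf_c])$.

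For the identity I would take $[\vf_{p_0}]$, the class attached to the base point. Closure gives $[\vf_{p_0}]\cdot[\vf_a]=[\vf_{p_0.a}]$ and $[\vf_a]\cdot[\vf_{p_0}]=[\vf_{a.p_0}]$, and the unit homotopies $\mu(c,1_P)\simeq1_P\simeq\mu(1_P,c)$ (evaluated at $a$) place both $p_0.a$ and $a.p_0$ in the path component of $a$; Lemma 4.13 then yields $\vf_{p_0.a}\simeq\vf_a\simeq\vf_{a.p_0}$, so $[\vf_{p_0}]$ is a two-sided identity (equivalently $\vf_{p_0}\simeq1_P$). For inverses I would use $[\vf_{a^{-1}}]$ with $a^{-1}=\eta(a)$, appealing directly to Proposition 4.11, whose proof already establishes $\vf_a\circ\vf_{a^{-1}}\simeq1_P$ and $\vf_{a^{-1}}\circ\vf_a\simeq1_P$; hence $[\vf_a]\cdot[\vf_{a^{-1}}]=[\vf_{a^{-1}}]\cdot[\vf_a]=[\vf_{p_0}]$. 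Collecting well-definedness, closure, associativity, the identity and inverses establishes that $HInn(P)$ is a group.
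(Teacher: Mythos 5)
Your proof is correct and follows essentially the same route as the paper: both hinge on the homotopy $\vf_a\circ\vf_b\simeq\vf_{a.b}$ supplied by the hypothesis $\eta\mu\simeq\mu(\eta\times\eta)\circ T$, the only cosmetic difference being that you define the product as $[\vf_a\circ\vf_b]$ and then check closure, while the paper defines it as $[\vf_{a.b}]$ and then checks well-definedness --- the two reduce to the same computation. Your explicit verification of the identity and inverse axioms (via Lemma 4.13 and Proposition 4.11) merely fills in details the paper leaves to the reader.
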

\begin{proof}
Define $[\vf_a][\vf_b]=[\vf_{a.b}]$ and $[\vf_a]^{-1}=[\vf_{a^{-1}}]$. Multiplication is well defined, since if $[\vf_a]=[\vf_{a'}]$ and $[\vf_b]=[\vf_{b'}]$, then $\vf_a\simeq\vf_{a'}$ and $\vf_b\simeq\vf_{b'}$. Therefore $\vf_a\circ\vf_b\simeq\vf_{a'}\circ\vf_{b'}$, but as we have shown $\vf_a\circ\vf_b\simeq\vf_{a.b}$, hence $\vf_{a.b}\simeq\vf_{a'.b'}$ which means $[\vf_{a.b}]=[\vf_{a'.b'}]$. Other properties of multiplication comes from homotopy associativity, homotopy invertibility and homotopy identity properties of $\mu$.
\end{proof}
\begin{theorem}Let $P$ be an H-group such that satisfies $\eta\mu\simeq\mu(\eta\times\eta)\circ T$, then
$HInn(P)\cong {P}/{Z(P)}$
\end{theorem}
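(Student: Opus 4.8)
The plan is to construct an explicit isomorphism in the direction $\Theta\colon P/Z(P)\lo HInn(P)$, defined on representatives by $\Theta(aZ(P))=[\vf_a]$, and then verify that $\Theta$ is a well-defined bijective group homomorphism. Both sides carry genuine group structures: $P/Z(P)$ is a group by Theorem 4.15 (which makes $Z(P)$ a normal saturated sub-H-group) together with Theorem 3.18, and $HInn(P)$ is a group by Theorem 4.17.

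Well-definedness is precisely where the standing hypothesis $\eta\mu\simeq\mu(\eta\times\eta)\circ T$ enters, through Proposition 4.16: if $aZ(P)=bZ(P)$, then $a\st{Z(P)}{\sim}b$, so Proposition 4.16 yields $\vf_a\simeq\vf_b$, i.e. $[\vf_a]=[\vf_b]$ in $HInn(P)$. The homomorphism property is then immediate from the group operation fixed in Theorem 4.17, since $\Theta\big((aZ(P))(bZ(P))\big)=\Theta((a.b)Z(P))=[\vf_{a.b}]=[\vf_a][\vf_b]$. Surjectivity is clear, as every element of $HInn(P)$ has the form $[\vf_a]=\Theta(aZ(P))$.

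The real work, and the step I expect to be the main obstacle, is injectivity, which I would establish by showing $\ker\Theta$ is trivial. The identity of $HInn(P)$ is $[\vf_{p_0}]$, and since $\eta$ is pointed we have $p_0^{-1}=p_0$, so $\mu(1_P,c)\simeq 1_P$ and $\mu(c,1_P)\simeq 1_P$ give $\vf_{p_0}\simeq 1_P$; hence it suffices to prove that $\vf_a\simeq 1_P$ forces $a\in Z(P)$. The key computation is $R_a\circ\vf_a\simeq L_a$, where $L_a=\mu(a,-)$ and $R_a=\mu(-,a)$: starting from $(a.(g.a^{-1})).a$, I would apply homotopy associativity of $\mu$ repeatedly to reach $a.(g.(a^{-1}.a))$, then slide the fixed point $a^{-1}.a$ to $p_0$ along the path obtained by evaluating the homotopy $\mu(\eta,1_P)\simeq c$ at $a$, and finally use $\mu(1_P,c)\simeq 1_P$ to land on $a.g=L_a(g)$. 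On the other hand $\vf_a\simeq 1_P$ gives $R_a\circ\vf_a\simeq R_a$, so $L_a\simeq R_a$, i.e. $a\in Z(P)$ by Definition 4.14. Since $Z(P)$ is saturated, Lemma 3.15(v) then gives $aZ(P)=\wt{Z(P)}$, the identity coset of $P/Z(P)$ by Theorem 3.18, so $\ker\Theta$ is trivial and $\Theta$ is injective.

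The delicate point throughout the injectivity computation is that every intermediate equivalence must be realized as a homotopy of self-maps of $P$ in the variable $g$, with $a$ held fixed, rather than as a mere path between the images of individual points; chaining these homotopies carefully, in the spirit of the explicit piecewise homotopy $F$ built in the proof of Proposition 4.11, is the only genuinely technical part. The remaining verifications are formal consequences of the results already established.
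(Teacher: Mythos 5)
Your proposal is correct and follows essentially the same route as the paper: the same map $\Theta(aZ(P))=[\vf_a]$, well-definedness via Proposition 4.16, surjectivity and the homomorphism property read off from the group law of $HInn(P)$, and injectivity by showing that a conjugation homotopic to the identity forces its parameter into $Z(P)$. The only cosmetic difference is that you check triviality of the kernel via the homotopy $R_a\circ\vf_a\simeq L_a$, whereas the paper translates the homotopy $1_P\simeq\vf_{a.b^{-1}}$ by $b.a^{-1}$ to compare $L_{b.a^{-1}}$ with $R_{b.a^{-1}}$ directly; both come down to the same manipulation.
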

\begin{proof}
Define $\Theta:{P}/{Z(P)}\longrightarrow HInn(P)$ by $\Theta(aZ(P))=[\vf_a]$. By Proposition 4.17 $\Theta$ is well defined and obviously $\Theta$ is onto and homomorphism. For injectivity, assume that $\vf_a\simeq\vf_b$ or equivalently $\vf_{a.b^{-1}}\simeq 1_P$. If $F$ is the homotopy between $1_P$ and $\vf_{a.b^{-1}}$, then by using associativity of multiplication, $\mu(b.a^{-1},-)$ and $\mu(-,b.a^{-1})$ are homotopic by $(b.a^{-1})F$. Hence $b.a^{-1}\in Z(P)$ which implies $aZ(P)=bZ(P)$.
\end{proof}
\section{topological view}
In this section $(P,\mu,\eta,c)$ is an H-group , $(P',\mu',\eta',c')$ is a sub-H-group of $P$ and 
${P}/{P'}$ is the set of all left cosets of $P'$ in $P$.
 We intend to topologized the set ${P}/{P'}$ by quotient topology that is induced by canonical map $\pi:P\lo{P}/{P'}$ which makes it a quasitopological group.
Also we study path component space of H-groups and find out a necessary and sufficient condition for significance of 0-semilocally simply connectedness introduced in \cite{BR}.

As introduced in \cite{H}, the path component space of a topological space $X$ is $\pi_0(X)$ with the quotient topology with respect to the quotient map $q':X\lo\pi_0(X)$, where $q'(x)=[x]$ which is denoted by $\pi_0^{top}(X)$.
\begin{definition}
A space X is 0-semilocally simply connected (0-SLSC) if for each point $x \in X$, there is an open
neighborhood $U$ of $x$ such that the inclusion $i : U \lo X$ induces the constant map $\pi_0(i) : \pi_0^{top}
(U) \lo\pi_0^{top}(X)$.
\end{definition}
\begin{proposition}
A space X is 0-semilocally simply connected if and only if each path component of $X$ is open.
\end{proposition}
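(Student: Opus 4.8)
The plan is to recast the $0$-SLSC condition as a purely geometric statement about path components, after which both implications become short. The key step is to unwind what it means for the induced map $\pi_0(i):\pi_0^{top}(U)\lo\pi_0^{top}(X)$ to be constant. An element of $\pi_0(U)$ is a path component of $U$, and $\pi_0(i)$ sends the path component of a point $y\in U$ in $U$ to the path component of $y$ in $X$; this is well defined because a path in $U$ is in particular a path in $X$. Hence $\pi_0(i)$ being constant is equivalent to saying that all points of $U$ lie in a single path component of $X$, i.e. that $U$ is contained in one path component of $X$. Since $x\in U$, that component is forced to be the path component $C_x$ of $x$ in $X$, so the $0$-SLSC condition reads: for every $x\in X$ there is an open neighborhood $U$ of $x$ with $U\sub C_x$.

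For the implication from open path components to $0$-SLSC, I would, given $x\in X$, simply take $U=C_x$. By hypothesis $C_x$ is open, it is an open neighborhood of $x$, and it is a single path component of $X$, so the induced map $\pi_0(i)$ is trivially constant. Thus $X$ is $0$-SLSC.

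For the converse, I would show that each path component is a union of open sets. Let $C$ be a path component of $X$ and let $x\in C$. By $0$-SLSC there is an open neighborhood $U_x$ of $x$ contained in a single path component of $X$; as $x\in U_x\cap C$, that component must be $C$, whence $U_x\sub C$. Then $C=\bigcup_{x\in C}U_x$ exhibits $C$ as a union of open sets, so $C$ is open.

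The only genuinely delicate point is the first paragraph: correctly identifying ``the induced map on $\pi_0^{top}$ is constant'' with ``$U$ lies in one path component of $X$.'' Once this translation is in place, the two inclusions ($U=C_x$ in one direction, $U_x\sub C$ in the other) are immediate, and no property of the topology on $\pi_0^{top}(U)$ or $\pi_0^{top}(X)$ beyond the underlying sets is actually needed, since constancy of a map is a set-level condition.
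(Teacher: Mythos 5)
Your proof is correct and follows essentially the same route as the paper's: both hinge on translating constancy of $\pi_0(i)$ into ``$U$ meets only one path component of $X$,'' then take $U=C_x$ for one direction and write each component as a union of such neighborhoods for the other. You are merely more explicit about the translation and the final union step, which the paper leaves implicit.
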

\begin{proof}
Let $X=\bigsqcup_{i\in I}X_i$, where $X_i$'s are path component of $X$. For an arbitrary $x$ there is $j\in I$ such that $x\in X_j$. Since
$X$ is 0-semilocally simply connected, there exists an open neighborhood $U$ of $x$ such that $\pi_0(i) : \pi_0^{top}(U) \lo\pi_0^{top}(X)$ is constant map, or equivalently $U$ meets just one path component of $X$ which implies $U\sub X_j$. Conversely, if each path component of $X$ is open, let $U$ be the path component containing $x$.
\end{proof}
\begin{xrem}
Obviously locall path connectivity follows 0-semilocally simply connectedness.
\end{xrem}

\begin{corollary}
X is 0-SLSC if and only if $\pi_0^{top}(X)$ has the discrete topology.
\end{corollary}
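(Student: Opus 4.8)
$X$ is 0-SLSC if and only if $\pi_0^{top}(X)$ has the discrete topology.

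The plan is to use Proposition 5.3 as the bridge, which already reduces 0-SLSC to the condition that every path component of $X$ is open. So the real content is to show that the path components of $X$ are open precisely when the quotient space $\pi_0^{top}(X)$ is discrete. This is a clean general fact about the quotient map $q':X\lo\pi_0^{top}(X)$ sending $x$ to its path component $[x]$, and I would prove it directly from the definition of the quotient topology.

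First I would recall that $\pi_0^{top}(X)$ carries the quotient topology with respect to $q'$, so a subset $S\sub\pi_0(X)$ is open if and only if $q'^{-1}(S)$ is open in $X$. Now suppose every path component of $X$ is open. Fix a point $[x]\in\pi_0(X)$; its preimage $q'^{-1}(\{[x]\})$ is exactly the path component of $X$ containing $x$, which is open by hypothesis, so the singleton $\{[x]\}$ is open in $\pi_0^{top}(X)$. Since every singleton is open, $\pi_0^{top}(X)$ is discrete. Conversely, suppose $\pi_0^{top}(X)$ is discrete. Then each singleton $\{[x]\}$ is open, so by the definition of the quotient topology its preimage $q'^{-1}(\{[x]\})$, which is the path component of $x$, is open in $X$. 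Hence every path component is open.

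Combining these two implications with Proposition 5.3, which states that $X$ is 0-SLSC if and only if each path component of $X$ is open, gives the desired equivalence. I do not expect any genuine obstacle here: the argument is a routine unwinding of the quotient topology together with the elementary observation that the fiber of $q'$ over a point is precisely a path component. The only point requiring a little care is the identification $q'^{-1}(\{[x]\})=$ (the path component of $x$), which is immediate from the definition of $q'$, and the standing understanding that a space is discrete exactly when all its singletons are open.
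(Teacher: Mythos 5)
Your argument is correct and is exactly the intended one: the paper states this corollary without proof as an immediate consequence of the preceding proposition (that $X$ is 0-SLSC iff each path component is open), and you supply precisely the routine quotient-topology verification that path components are open iff $\pi_0^{top}(X)$ is discrete. The only nit is a reference-number slip (the proposition you invoke is Proposition 5.2 in the paper's numbering, not 5.3).
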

 By Lemma 3.22, $\theta:{P}/{P_0}\lo\pi_0(P)$ by $\theta(gP_0)=[g]$ is a group isomorphism. Then we have
\begin{lemma}
The group isomorphism $\theta:{P}/{P_0}\lo\pi_0^{top}(P)$ is a homeomorphism.
\end{lemma}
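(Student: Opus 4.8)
The plan is to use that $P/P_0$ and $\pi_0^{top}(P)$ are both quotient spaces of the \emph{same} space $P$, whose defining quotient maps are intertwined by $\theta$; the homeomorphism will then be a formal consequence of the universal property of quotient maps, with essentially no analytic content.

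First I would fix the two canonical maps: $\pi\colon P\lo P/P_0$, which by definition carries the quotient topology, and $q'\colon P\lo\pi_0^{top}(P)$, $q'(g)=[g]$, which is the quotient map defining the path component space. Both are quotient maps by construction. The one identity I need is the commutativity $\theta\circ\pi=q'$, immediate since $\theta(\pi(g))=\theta(gP_0)=[g]=q'(g)$ for all $g\in P$. Combined with the fact, supplied by Lemma 3.22, that $\theta$ is a bijection, this yields for every $S\sub\pi_0^{top}(P)$ the preimage identity $\pi^{-1}(\theta^{-1}(S))=(q')^{-1}(S)$, and dually $\pi^{-1}(V)=(q')^{-1}(\theta(V))$ for every $V\sub P/P_0$ (using $\theta^{-1}\circ\theta=\mathrm{id}$). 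Conceptually these express that the cosets $gP_0$ are exactly the path components of $P$: the inclusion of the path component of $g$ in $gP_0$ is Lemma 3.3(i),(iv), and the reverse inclusion is forced by the injectivity of $\theta$ (alternatively, if $g'\in gP_0$ then $g^{-1}.g'\hin P_0$, and left-translating a path from $g^{-1}.g'$ to $p_0$ by $g$, together with the H-group identities $\mu(\eta,1_P)\simeq c$ and $\mu(c,1_P)\simeq 1_P$, joins $g'$ to $g$).

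Then I would prove continuity in both directions by the standard argument. For $\theta$: if $U\sub\pi_0^{top}(P)$ is open, then $\theta^{-1}(U)$ is open in $P/P_0$ precisely when $\pi^{-1}(\theta^{-1}(U))$ is open in $P$, because $\pi$ is a quotient map; but $\pi^{-1}(\theta^{-1}(U))=(q')^{-1}(U)$ is open since $q'$ is continuous, so $\theta$ is continuous. For $\theta^{-1}$: if $V\sub P/P_0$ is open, then $\theta(V)$ is open in $\pi_0^{top}(P)$ precisely when $(q')^{-1}(\theta(V))$ is open in $P$, because $q'$ is a quotient map; and $(q')^{-1}(\theta(V))=\pi^{-1}(V)$ is open since $\pi$ is continuous, so $\theta^{-1}$ is continuous. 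Hence $\theta$ is a homeomorphism.

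I do not anticipate a genuine obstacle: the statement is exactly the principle that two quotient maps out of one space inducing the same partition have canonically homeomorphic targets. The only step using anything specific to H-groups is the verification that $gP_0$ coincides with the path component of $g$, and that is already encoded in the bijectivity of $\theta$ from Lemma 3.22.
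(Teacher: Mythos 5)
Your proof is correct and is essentially the paper's argument: the paper also exhibits the commutative square with the identity on $P$, the two quotient maps $q\colon P\lo P/P_0$ and $q'\colon P\lo\pi_0^{top}(P)$, and concludes by the universal property of quotient maps. You have merely written out the details (the identity $\theta\circ\pi=q'$, the two continuity checks, and the verification that the cosets $gP_0$ are exactly the path components) that the paper leaves implicit.
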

\begin{proof}
Consider the following commutative diagram:
$$\xymatrix{
P \ar[r]^{1} \ar[d]_{q}
& P\ar[d]^{q'}  \\
{P}/{P_0} \ar[r]^{\theta} & \pi_0^{top}(P).  }$$
Since $q$ and $q'$ are quotient maps, the result holds.
\end{proof}
Some important facts about the canonical map $\pi$ are collected in the following.
\begin{proposition}
Let $(P,\mu,\eta,c)$ be an H-group and $(P',\mu',\eta',c')$ be a sub-H-group of $P$. Let ${P}/{P'}$ be the set of all left cosets of $P'$ endowed
with the quotient topology induced from $P$ by $\pi:P\lo P/P'$. Then\\
(i) $\pi$ is onto; \\
(ii) $\pi$ is continuous;\\
(iii) If $P$ is 0-semilocally simply connected, then $\pi$ is open .
\end{proposition}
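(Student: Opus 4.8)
The plan is to dispatch (i) and (ii) directly from the definitions and to reserve the real work for (iii). For (i), every element of ${P}/{P'}$ is by construction a left coset $gP'=\pi(g)$ with $g\in P$, so $\pi$ is surjective. For (ii), ${P}/{P'}$ is equipped with the quotient topology induced by $\pi$, and this topology makes $\pi$ continuous by its very definition, so nothing further is needed.

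The substance is (iii). By the defining property of the quotient topology, for an open set $U\sub P$ the image $\pi(U)$ is open in ${P}/{P'}$ if and only if the full preimage $\pi^{-1}(\pi(U))$ is open in $P$; so my plan is to identify this preimage and prove it open. Unwinding the definitions, $\pi^{-1}(\pi(U))=\{g\in P\mid gP'=uP'\ \text{for some}\ u\in U\}$, i.e.\ the union $\bigcup_{u\in U}uP'$ of all cosets that meet $U$. The key observation I would use is that each coset is already a union of path components of $P$: this is exactly Lemma 3.3(iv), which asserts that $g_1P'=g_2P'$ whenever $g_1$ and $g_2$ lie in the same path component. Consequently, if $g\in\pi^{-1}(\pi(U))$ then the whole path component of $g$ lies in $\pi^{-1}(\pi(U))$, so $\pi^{-1}(\pi(U))$ is a union of path components of $P$.

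At this point I would invoke the hypothesis. Since $P$ is 0-semilocally simply connected, Proposition 5.3 tells us that every path component of $P$ is open, whence any union of path components is open. Therefore $\pi^{-1}(\pi(U))$ is open, and so $\pi(U)$ is open, proving that $\pi$ is an open map. The main obstacle here is conceptual rather than computational: the point requiring insight is to recognize that the coset partition is \emph{coarser} than the path-component partition (Lemma 3.3(iv)), so that preimages of images are automatically saturated with respect to path components, and that the 0-SLSC hypothesis enters \emph{only} through the openness of path components (Proposition 5.3). Once these two facts are lined up, the computation of $\pi^{-1}(\pi(U))$ and the conclusion of openness are routine.
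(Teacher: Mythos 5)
Your proof is correct and follows essentially the same route as the paper: (i) and (ii) are dispatched from the definitions, and for (iii) both arguments reduce openness of $\pi(U)$ to showing that $\pi^{-1}(\pi(U))$ is a union of path components of $P$, each of which is open by the 0-SLSC hypothesis (Proposition 5.2 in the paper's numbering). If anything, you are slightly more careful than the paper, which writes $\pi^{-1}(\pi(U))=\wt{U}$ (the saturation of $U$) — not quite accurate when a coset meets $U$ in only some of its path components — whereas your justification via Lemma 3.3(iv), that the coset partition is coarser than the path-component partition, is exactly what the argument needs.
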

\begin{proof}
(i) is obvious and (ii) follows by the definition of quotient topology. For (iii), let $U$ be open in $P$. We must show that $\pi(U)$
is open in ${P}/{P'}$ i.e. $\pi^{-1}(\pi(U))$ is open in $P$. We have $\pi^{-1}(\pi(U))=\wt{U}={\bigcup \atop \al\in J}O_{\al}$, where $O_{\al}$'s are path components of $P$ that intersect $U$. 0-semilocally connectivity of $P$ implies that $O_{\al}$'s are open and hence $\pi^{-1}(\pi(U))$ is open, as desired.
\end{proof}
\begin{theorem}
Let $(P,\mu,\eta,c)$ be an H-group and $(N,\mu',\eta',c')$ be a normal sub-H-group of $P$.
Then ${P}/{N}$ endowed with the quotient topology induced from $P$ by $\pi:P\lo P/N$ is a homogeneous space.
\end{theorem}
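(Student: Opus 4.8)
The plan is to realize homogeneity through the left-translation maps of the group structure on $P/N$, exactly as one does for a topological group modulo a subgroup. Since $N$ is normal, Theorem 3.18 guarantees that $P/N$ is a group under $(g_1N)(g_2N)=(g_1.g_2)N$; in particular, for each $a\in P$ the left-translation $L_{aN}:P/N\lo P/N$ given by $L_{aN}(gN)=(a.g)N$ is a bijection with inverse $L_{a^{-1}N}$. It remains only to upgrade these bijections to homeomorphisms and to check that the family acts transitively on points.

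First I would fix $a\in P$ and consider the continuous map $\lambda_a:P\lo P$ given by $\lambda_a(g)=\mu(a,g)=a.g$; continuity is immediate, since $\lambda_a$ is the composite of $g\mapsto(a,g)$ with $\mu$. The crucial point is that $\lambda_a$ is compatible with the coset partition: if $g_1N=g_2N$, then $(a.g_1)N=(a.g_2)N$. This is precisely the well-definedness of left multiplication on $P/N$ established in Theorem 3.18 using the normality of $N$, so $\pi\circ\lambda_a$ is constant on the fibres of $\pi$ and the square
$$\xymatrix{
P \ar[r]^{\lambda_a} \ar[d]_{\pi} & P \ar[d]^{\pi} \\
P/N \ar[r]^{L_{aN}} & P/N }$$
commutes, i.e. $\pi\circ\lambda_a=L_{aN}\circ\pi$.

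Next I would invoke the universal property of the quotient map $\pi$, which is a quotient map by construction and is continuous and onto by Proposition 5.6: since $\pi\circ\lambda_a$ is continuous and factors through $\pi$, the induced map $L_{aN}$ is continuous. Applying the same argument to $a^{-1}$ shows that $L_{a^{-1}N}=L_{aN}^{-1}$ is also continuous, whence each $L_{aN}$ is a homeomorphism of $P/N$. To conclude homogeneity I would then verify transitivity: given two cosets $g_1N$ and $g_2N$, set $aN=(g_2N)(g_1N)^{-1}$ in the group $P/N$ and choose any representative $a$; then $L_{aN}(g_1N)=(aN)(g_1N)=g_2N$, so $L_{aN}$ carries the first point to the second.

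I expect no genuine obstacle here: the entire content is carried by Theorem 3.18, which supplies both the group structure on $P/N$ and the fibre-compatibility of $\lambda_a$, while the passage from continuity of $\pi\circ\lambda_a$ to continuity of $L_{aN}$ is the formal universal property of quotient maps. The only step demanding care is confirming that $\lambda_a$ respects cosets, and this is exactly where the normality of $N$ enters; everything else is routine.
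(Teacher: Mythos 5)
Your proposal is correct and follows essentially the same route as the paper's own proof: both realize homogeneity via the left translations $L_{aN}$, obtain their continuity from the continuity of $g\mapsto a.g$ on $P$ together with the universal property of the quotient map $\pi$, and get the inverse homeomorphism by applying the same argument to $a^{-1}$. Your write-up is simply a more explicit version of the paper's argument, spelling out the commuting square $\pi\circ\lambda_a=L_{aN}\circ\pi$ and the role of Theorem 3.18 in the well-definedness of the translations.
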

\begin{proof}
Let $g_1N,g_2N\in {P}/{N}$ and $a\in P$ such that $(aN).(g_1N)=g_2N$ (namely $a=g_2g_1^{-1}$). Define the mapping $ L_{aN}:{P}/{N}\lo{P}/{N}$ by $L_{aN}(gN)=(aN)(gN)=(ag)N$. Then it is easy to check that $ L_{aN}$ is well-defined mapping of ${P}/{N}$ onto itself.\\
Continuity of $L_{aN}$ comes from continuity of $L_a:P\lo P$ and universal property of quotient map $\pi$.  Applying the previous argument to $a^{-1}$ we get ${L_{aN}}^{-1}=L_{a^{-1}N}$ which is continuous, hence $ L_{aN}$ is a homeomorphism. Therefore ${P}/{N}$ acts on itself by left and right translation ($R_{aN}(gN)=(ga)N$) as a group of self homeomorphisms. Clearly these actions are both transitive, and hence the result holds.
\end{proof}
\begin{xrem}
Note that $L_a$ is not necessarily a homeomorphism because $L_a\circ L_a^{-1}$ is homotopic to $1_P$ and is not equal to $1_P$. But fortunately $L_a$'s are homotopy equivalence.
\end{xrem}
\begin{definition}(\cite{A})
A quasitopological group G is a group with topology such that inversion and all translation are continuous.
\end{definition}
\begin{theorem}
Let $(P,\mu,\eta,C)$ be an H-group and $(N,\mu',\eta',C')$ be a normal sub-H-group of $P$.
Then ${P}/{N}$ endowed with the quotient topology induced from $P$ by $\pi:P\lo P/N$ is a quasitopological group.
\end{theorem}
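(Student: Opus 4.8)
The plan is to check the two requirements of Definition 5.7 directly: that $P/N$ is a group, that all its translations are continuous, and that inversion is continuous. The group structure is not new work---by Theorem 3.18 the normality of $N$ already makes $P/N$ a group under $(g_1N)(g_2N)=(g_1g_2)N$, with identity $p_0N=\wt{N}$ and inverse $(gN)^{-1}=g^{-1}N$. So only the continuity statements remain, and throughout the topology in play is the quotient topology induced by $\pi:P\lo P/N$.

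Continuity of the translations has in fact already been secured by the preceding Theorem 5.6. There, for each $a\in P$, the left translation $L_{aN}:gN\mapsto(ag)N$ was shown to be a well-defined homeomorphism of $P/N$ (with inverse $L_{a^{-1}N}$), and the right translation $R_{aN}:gN\mapsto(ga)N$ was likewise exhibited as a self-homeomorphism. In particular every translation is continuous, so this half of Definition 5.7 is inherited at no extra cost.

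Hence the only genuinely new point is continuity of the inversion map $J:P/N\lo P/N$ given by $J(gN)=g^{-1}N$. The plan is to descend $\eta$ through the quotient and appeal to the universal property of $\pi$. Concretely, one checks that the square formed by $\eta:P\lo P$ on top and $J$ on the bottom, with both vertical maps equal to $\pi$, commutes: both $J\circ\pi$ and $\pi\circ\eta$ send $g$ to $g^{-1}N$. Since $\eta$ is continuous (being part of the H-group data of $P$) and $\pi$ is continuous, the composite $\pi\circ\eta$ is continuous; and because $\pi$ is a quotient map, the universal property of the quotient topology upgrades this to continuity of $J$.

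I do not expect a serious obstacle here: the delicate issues---well-definedness of the coset inversion and of the translations---are absorbed into Theorems 3.18 and 5.6, and the commuting square reduces continuity of $J$ to continuity of $\eta$, which holds by hypothesis. The one point to state with care is simply the identity $J\circ\pi=\pi\circ\eta$ of set maps, after which the quotient universal property finishes the argument and all clauses of Definition 5.7 are verified.
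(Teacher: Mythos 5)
Your proposal is correct and follows essentially the same route as the paper: translations are continuous by the preceding theorem on homogeneity, and inversion descends through the quotient via the identity $J\circ\pi=\pi\circ\eta$ (which holds on the nose since $g^{-1}$ denotes $\eta(g)$) together with the universal property of the quotient map. The paper's proof is just a terser statement of exactly these two points.
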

\begin{proof}
It was proved in the previous theorem that all translations are continuous. Continuity of inversion follows from the universal property of quotient map $q:P\lo{P}/{N}$ and continuity of $\eta$.
\end{proof}
\begin{corollary}
$\pi_0^{top}$ is a functor from the category of H-groups to the category of quasitopological group.
\end{corollary}
\begin{theorem}
Let $P$ be a 0-SLSC H-group with $N$ as normal sub-H-group. Then \\
(i) The canonical mapping $\pi:P\lo{P}/{N}$ is a continuous and open homomorphism; \\
(ii) ${P}/{N}$ endowed with quotient topology is a topological group.
\end{theorem}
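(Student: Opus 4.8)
The plan is to prove the two assertions in sequence, exploiting the fact that a 0-SLSC space has all path components open (Proposition 5.3) to upgrade the quasitopological group structure of Theorem 5.10 to a genuine topological group structure.

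First I would establish part (i). Continuity of $\pi$ is already recorded in Proposition 5.6(ii), and openness of $\pi$ is exactly Proposition 5.6(iii), which applies because $P$ is 0-SLSC by hypothesis. It remains to see that $\pi$ is a homomorphism in the appropriate sense: since $N$ is normal, $P/N$ is a group by Theorem 3.19 with multiplication $(g_1N)(g_2N)=(g_1.g_2)N$, and by construction $\pi(g)=gN$, so $\pi(g_1)\pi(g_2)=\pi(g_1.g_2)$ holds by the very definition of the product on cosets. Thus $\pi$ is a continuous open homomorphism onto the group $P/N$.

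For part (ii), I would argue that the quotient topology on $P/N$ makes multiplication $m:P/N\times P/N\lo P/N$ continuous; combined with continuity of inversion (already proved in Theorem 5.10, as $P/N$ is a quasitopological group) this gives the topological group property. The key tool is that $\pi\times\pi:P\times P\lo P/N\times P/N$ is itself a quotient map. This is where the 0-SLSC hypothesis does the essential work: since each path component of $P$ is open, $\pi$ is open by Proposition 5.6(iii), and a product of two open surjections is again a quotient map (the product of an open map with itself is open, and open continuous surjections are quotient maps). Granting that $\pi\times\pi$ is a quotient map, continuity of $m$ follows from the universal property applied to the composite $P\times P\st{\mu}{\lo}P\st{\pi}{\lo}P/N$, which descends through $\pi\times\pi$ precisely because $\mu$ covers the coset multiplication, i.e. $\pi\circ\mu=m\circ(\pi\times\pi)$.

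The main obstacle is the claim that $\pi\times\pi$ is a quotient map: quotient maps are notoriously not closed under products in general, so one genuinely needs the openness of $\pi$ rather than mere continuity and surjectivity. Here the 0-SLSC assumption is indispensable, since it is exactly what forces $\pi$ to be open (via Proposition 5.6(iii)). Once openness of $\pi\times\pi$ is in hand, the descent of $\mu$ to a continuous multiplication on $P/N$ is routine, and together with the already-established continuity of inversion the proof concludes that $P/N$ is a topological group.
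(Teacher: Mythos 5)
Your proposal is correct and follows essentially the same route as the paper: part (i) via the openness of $\pi$ from the 0-SLSC hypothesis together with the definition of coset multiplication, and part (ii) by noting that openness of $\pi$ makes $\pi\times\pi$ a quotient map so that $\mu$ descends to a continuous multiplication, with continuity of inversion already supplied by the quasitopological group structure. Your write-up is in fact more explicit than the paper's about why the product of quotient maps needs the openness hypothesis, but the underlying argument is identical.
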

\begin{proof}
(i) By Proposition 5.5, $\pi$ is clearly a continuous and open map. It is enough to show that $\pi$ is a homomorphism. For this let $g,g'\in P$. Then
$$\pi(gg')=gg'N=(gN)(g'N)=\pi(g)\pi(g'),$$ since $N$ is a normal sub-H-group of $P$.\\
(ii) By (i) $\pi$ is open map which implies $\pi\times\pi$ is quotient  map and hence multiplication in ${P}/{N}$ is continuous. Using  Theorem 5.10, the result holds.
\end{proof}
\begin{theorem}
Let $P$ be a 0-SLSC H-group with normal sub-H-group $N$, then ${P}/{N}$ is a discrete topological group.
\end{theorem}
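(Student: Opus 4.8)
The plan is to combine the two immediately preceding theorems. By Theorem~5.11(ii), the hypothesis that $P$ is a 0-SLSC H-group with normal sub-H-group $N$ already guarantees that $P/N$ endowed with the quotient topology is a topological group; so the only thing that remains to be established is that this topology is the \emph{discrete} one. Thus the real content of the statement is purely topological: every singleton $\{gN\}$ in $P/N$ is open.

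First I would reduce discreteness to a statement about a single point. Since $P/N$ is a topological group (equivalently, since it is homogeneous by Theorem~5.9, the left translations $L_{aN}$ being homeomorphisms), it suffices to show that the singleton containing the identity coset $p_0N=\wt{N}$ is open; every other singleton is then its image under a homeomorphism. To show $\{\wt N\}$ is open in the quotient topology it is enough, by the definition of the quotient topology induced by $\pi:P\lo P/N$, to show that $\pi^{-1}(\{\wt N\})$ is open in $P$. But $\pi^{-1}(\{\wt N\})=\wt N$, the saturation of $N$, which is exactly the union of those path components of $P$ that meet $N$.

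The key step is then to invoke the 0-SLSC hypothesis in the form given by Proposition~5.2: $P$ is 0-SLSC if and only if each path component of $P$ is open. Hence $\wt N$, being a union of path components of $P$, is open, so $\{\wt N\}$ is open in $P/N$, and by homogeneity every singleton is open. This makes $P/N$ discrete. Alternatively, and perhaps more cleanly, I would argue that when $P$ is 0-SLSC the canonical map $\pi$ is open (Proposition~5.5(iii)): applying $\pi$ to the open set $\wt N$ (or to any open path component) exhibits each point of $P/N$ as an open set, which is the same conclusion.

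I do not expect a genuine obstacle here, since both halves---the topological-group structure and the discreteness---have already been prepared by Theorems~5.10--5.11 and Propositions~5.2 and~5.5. The only point requiring a little care is bookkeeping with the saturation: one must note that $\pi^{-1}(\pi(U))=\wt U$ for open $U$ (used already in the proof of Proposition~5.5(iii)) and that $\wt N$ is genuinely a union of full path components, so that the equivalence in Proposition~5.2 applies verbatim. Once that identification is made, the result is immediate.
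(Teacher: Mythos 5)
Your proposal is correct and follows essentially the same route as the paper: the paper's proof likewise observes that 0-SLSC makes $\wt{N}=\pi^{-1}(\{\wt N\})$ open (as a union of open path components), so the identity of the topological group $P/N$ (already established by the preceding theorem) is open, and discreteness follows by translation. The only caveat is that your cross-references are shifted by one (the topological-group statement is Theorem 5.10(ii) and homogeneity is Theorem 5.6), but the cited content is the right one.
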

\begin{proof}
Since $P$ is 0-SLSC, $\wt{N}$ is open which implies that the identity element in the topological group $P/N$ is open, hence $P/N$ has discrete topology.
\end{proof}
\begin{xrem}
If we consider quotients of H-groups and path component spaces by quotient topology as described above, then all of group homomorphism and group isomorphism in section 4 hold in the category of quasitopological group and continuous homomorphism.
\end{xrem}
\section{applications to topological homotopy groups}
In 2002, a work of D. K. Biss \cite{B} initiated the development
of a theory in which the familiar fundamental group $\pi_1(X,x)$ of
a topological space $X$ becomes a topological space denoted by
$\pi_1^{top}(X,x)$ by endowing it with the quotient topology inherited
from the path components of based loops in $X$ with the compact-open
topology. Among other things, Biss claimed that $\pi_1^{top}(X,x)$
is a topological group. However, there
is a gap in the proof of Proposition 3.1 in \cite{B}. For more details, see
\cite{CM}. P. Fabel \cite{F} and Brazas \cite{BR}, discovered some interesting counterexamples for continuity of multiplication in $\pi_1^{top}(X,x)$.

In this section we provide some new results in topological homotopy groups and topological fundamental groups.
In \cite{G1}, the third author et.al. extended the above theory to higher homotopy
groups by introducing a topology on the n-th homotopy group of a
pointed space $(X,x)$ as a quotient of the n-loop space $\Omega^n(X,x)$
equipped with the compact-open topology. Call this space the topological
homotopy group and denote it by $\pi_n^{top}(X,x)$. The misstep
in the proof is repeated to prove that $\pi_n^{top}(X,x)$ is a topological group \cite[Theorem 2.1]{G1}. The third author et.al. in \cite{G2} showed that for n-Hawaiian like spaces, $n\geq 2$, $\pi_n^{top}(X,x)$ is a prodiscrete topological group.  Hence, there is an open question whether or not $\pi_n^{top}(X,x)$, $n\geq 1$, is a topological group.

Theorems 6.1, 6.2 and 6.4 in the following are reproved by using advantages of section 5 which can be found in \cite{B} and \cite{CM}.
\begin{theorem}
If $X$ is a path connected topological space, then $\pi_n^{top}(X,x)\cong\pi_n^{top}(X,y)$ as quasitopological groups, for each $x,y\in X$ and $n\geq1$.
\end{theorem}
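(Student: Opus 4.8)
The plan is to produce an explicit quasitopological-group isomorphism between $\pi_n^{top}(X,x)$ and $\pi_n^{top}(X,y)$ and to verify it is a homeomorphism using the machinery of Section~5. Since $X$ is path connected, choose a path $\al$ from $x$ to $y$. Example~2.2 already supplies an H-isomorphism $\al^+:\Omega(X,x)\lo\Omega(X,y)$ (and for the higher case $n\geq 2$ the analogous conjugation-by-$\al$ map on $\Omega^n(X,x)$, applied in the last coordinate), so the first step is to record that $\al^+$ is an H-homomorphism of loop spaces, hence by Proposition~2.3 the induced map $\pi_0(\al^+)=\pi_n(X,x)\lo\pi_n(X,y)$ is a group homomorphism; its inverse is induced by $(\al^{-1})^+$, so it is a group isomorphism.

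Next I would upgrade this group isomorphism to a homeomorphism. The key observation is that by definition $\pi_n^{top}(X,x)=\pi_0^{top}(\Omega^n(X,x))$, the path component space of the loop space with the compact-open topology. By Corollary~5.9, $\pi_0^{top}$ is a functor from H-groups to quasitopological groups, so applying it to the H-homomorphism $\al^+$ yields a \emph{continuous} homomorphism $\pi_n^{top}(X,x)\lo\pi_n^{top}(X,y)$ of quasitopological groups. Applying the functor to $(\al^{-1})^+$ gives a continuous inverse, because $\al^+\circ(\al^{-1})^+\simeq 1$ and $(\al^{-1})^+\circ\al^+\simeq 1$ induce the identity after passing to $\pi_0$ (homotopic H-maps induce the same map on path components). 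Therefore the induced map is a continuous group isomorphism with continuous inverse, i.e.\ an isomorphism of quasitopological groups.

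The main technical point to nail down is that the compact-open topology on $\Omega(X,x)$ makes $\al^+$ continuous, so that $\Omega(X,x)$ is genuinely an H-group in the topological (not merely homotopical) sense and Section~5 applies; concretely, one checks that $\beta\mapsto \al^{-1}*(\beta*\al)$ is continuous in the compact-open topology, which is routine from continuity of concatenation and the fact that $\al$ is fixed. A secondary point for the case $n\geq 2$ is to make precise how conjugation by $\al$ acts on $\Omega^n(X,x)$ through the last loop coordinate and to confirm it is still an H-isomorphism; this follows the same pattern as $n=1$ since $\Omega^n(X,x)=\Omega(\Omega^{n-1}(X,x))$. I expect the hardest part to be purely bookkeeping: verifying that passing to $\pi_0^{top}$ turns the homotopy-level inverses into genuine two-sided inverses of continuous maps, which is exactly what the functoriality in Corollary~5.9 together with the Remark that homotopic H-maps agree on $\pi_0$ is designed to deliver.
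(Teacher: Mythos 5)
Your proposal is correct and follows essentially the same route as the paper: use the change-of-basepoint H-isomorphism $\al^+$ from Example~2.2, iterate the loop functor to handle $n\geq 2$ via $\Omega^n(X,x)=\Omega(\Omega^{n-1}(X,x))$, and apply the functoriality of $\pi_0^{top}$ (Corollary~5.9) to convert the H-isomorphism into an isomorphism of quasitopological groups. The only cosmetic difference is that the paper realizes the higher-dimensional map as $\Omega^{n-1}(\al^+)$ (post-composition) rather than your "conjugation in the last coordinate," but these agree up to homotopy and the argument is the same.
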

\begin{proof}
By Example 2.2, $\al^+$ is an H-isomorphism between $\Omega(X,x)$ and $\Omega(X,y)$, where $\al$ is a path from $x$ to $y$. Since $\pi_0^{top}$ is a functor, $\pi_0^{top}(\al^+)$ is an equivalence morphism in category of quasitopological groups. Therefore $\pi_1^{top}(X,x)\cong\pi_1^{top}(X,y)$.
Also $\al^+$ is a homotopy equivalence, hence $\Omega\al^+:\Omega(\Omega(X,x),e_x)\lo\Omega(\Omega(X,y),e_y)$ is an H-isomorphism and therefore a homotopy equivalence, where $e_z$ is constant loop at $z\in X$. Consider $\Omega^n$ as the composition of $\Omega$ with itself $n$ times for $n\in N$. For $n>1$, we can construct by induction H-isomorphisms $\Omega^n(\al^+):\Omega(\Omega^n(X,x),e_x)\lo\Omega(\Omega^n(X,y),e_y)$, where $\Omega^n(\al^+)(\lambda)=\Omega^{n-1}(\al^+)\circ\lambda$. Since $\pi_0^{top}$ is a functor, $\pi_0^{top}(\Omega^n(X,e_x))\cong\pi_0^{top}(\Omega^n(X,e_y))$. Therefore $\pi_n^{top}(X,x)=\pi_0^{top}(\Omega^n(X,x))\cong\pi_0^{top}(\Omega^n(X,y))=\pi_n^{top}(X,y)$, as desired.
\end{proof}
\begin{theorem}
For homotopically equivalent topological spaces $(X,x)$ and $(Y,y)$, $\pi_n^{top}(X,x)\cong\pi_n^{top}(Y,y)$ as quasitopological groups, for $n\geq 1$.
\end{theorem}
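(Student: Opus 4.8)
The plan is to reduce the statement to the functoriality of $\pi_0^{top}$, mirroring the argument just given for Theorem 6.1, but replacing the path-induced H-isomorphism $\al^+$ by the loop map of the homotopy equivalence itself. First I would fix a pointed homotopy equivalence $f:(X,x)\lo(Y,y)$. By Example 2.2, since $f$ is a homotopy equivalence, $\Omega f:\Omega(X,x)\lo\Omega(Y,y)$ is an H-isomorphism. Applying the functor $\pi_0^{top}$ from H-groups to quasitopological groups (Corollary 5.11), the induced morphism $\pi_0^{top}(\Omega f)$ is an isomorphism of quasitopological groups, which is exactly $\pi_1^{top}(X,x)\cong\pi_1^{top}(Y,y)$. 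This disposes of the case $n=1$.

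For $n>1$ I would argue by induction, just as in Theorem 6.1. The key observation is that an H-isomorphism possesses a two-sided homotopy inverse by definition, hence is in particular a homotopy equivalence of the underlying spaces. Thus $\Omega f$ is itself a homotopy equivalence, so Example 2.2 applies once more and gives that $\Omega(\Omega f)=\Omega^2 f:\Omega^2(X,x)\lo\Omega^2(Y,y)$ is again an H-isomorphism, and therefore again a homotopy equivalence. Iterating, the map $\Omega^n f:\Omega^n(X,x)\lo\Omega^n(Y,y)$, defined by $\Omega^n f(\la)=\Omega^{n-1}f\circ\la$, is an H-isomorphism for every $n\geq1$. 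Applying $\pi_0^{top}$ one final time yields
$$\pi_n^{top}(X,x)=\pi_0^{top}(\Omega^n(X,x))\cong\pi_0^{top}(\Omega^n(Y,y))=\pi_n^{top}(Y,y)$$
as quasitopological groups, which is the assertion.

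The one point genuinely requiring care, and the main obstacle, is the propagation of the H-isomorphism property through successive applications of $\Omega$. Example 2.2 manufactures an H-isomorphism only out of a homotopy equivalence, so at each inductive step one must first downgrade the H-isomorphism $\Omega^{k}f$ to a plain homotopy equivalence, using the existence of its homotopy inverse, before feeding it back into Example 2.2. Everything else is formal: because $\pi_0^{top}$ is a functor into the category of quasitopological groups (Corollary 5.11), it automatically carries H-isomorphisms to isomorphisms in that category, so no separate verification of continuity of the comparison map or its inverse is needed.
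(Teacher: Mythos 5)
Your proposal is correct and follows essentially the same route as the paper's proof: both reduce the statement to the fact that $\Omega^n$ carries a (pointed) homotopy equivalence to an H-isomorphism and then invoke the functoriality of $\pi_0^{top}$ into quasitopological groups. Your version merely makes explicit, via the induction through Example 2.2, the step the paper compresses into the assertion that $\Omega^n$ ``sends equivalent objects to equivalent objects.''
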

\begin{proof}
We know that for each $n\in N$, $\Omega_n$ is a functor from pointed topological spaces to category of H-groups and hence $\Omega_n$ sends equivalent objects to equivalent objects. Since $\pi_0^{top}$ is also a functor with value in category of quasitopological groups, $\pi_0^{top}(\Omega_n(X,x))\cong\pi_0^{top}(\Omega_n(Y,y))$, as desired.
\end{proof}
\begin{lemma}
For every locally path connected, semilocally simply connected space $X$, $\Omega (X,x)$ is locally path connected, for each $x\in X$.
\end{lemma}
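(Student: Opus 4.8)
The plan is to verify local path connectivity of $\Omega(X,x)$ in its equivalent form: every loop $\gamma$ admits a neighbourhood basis consisting of path-connected open sets. Recall that the compact-open topology on $\Omega(X,x)$ has subbasic sets $\langle K,U\rangle=\{\sigma\mid \sigma(K)\sub U\}$ with $K\sub I$ compact and $U\sub X$ open. First I would record the standard fact that the sets $\mathcal N=\bigcap_{i=1}^n\langle[t_{i-1},t_i],W_i\rangle$, with $0=t_0<\dots<t_n=1$ and $W_i$ open containing $\gamma([t_{i-1},t_i])$, are cofinal among neighbourhoods of $\gamma$; this follows from a Lebesgue-number argument applied to the finitely many compacta defining any given basic neighbourhood, together with the uniform continuity of $\gamma$ on the compact interval $I$.

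Next I would refine such an $\mathcal N$ using the two hypotheses. By semilocal simple connectivity, after passing to a finer subdivision I may assume each $W_i$ is contained in an open set whose loops are null-homotopic in $X$. Since $\gamma([t_{i-1},t_i])$ is the continuous image of an interval, hence path-connected, it lies in a single path component of $W_i$; by local path connectivity that component $U_i$ is open and path-connected, and I replace $W_i$ by $U_i$. At each interior division point I set $V_i$ equal to the path component of $\gamma(t_i)$ in the open set $U_i\cap U_{i+1}$, which is again open and path-connected. Define the candidate neighbourhood $\mathcal V=\bigcap_{i=1}^n\langle[t_{i-1},t_i],U_i\rangle\cap\bigcap_{i=1}^{n-1}\langle\{t_i\},V_i\rangle$, intersected with $\Omega(X,x)$; it is open, contains $\gamma$, and sits inside the prescribed neighbourhood.

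The heart of the argument is to join an arbitrary $\delta\in\mathcal V$ to $\gamma$ by a path inside $\mathcal V$, equivalently a based homotopy $H\colon I\times I\lo X$ with $H(\cdot,s)\in\mathcal V$ for every $s$. For each interior $t_i$ I choose a path $p_i$ in $V_i$ from $\gamma(t_i)$ to $\delta(t_i)$, and take $p_0,p_n$ constant at $x$. I would then build $H$ strip by strip: over $[t_{i-1},t_i]\times I$ the four edges (bottom $\gamma|$, top $\delta|$, sides $p_{i-1},p_i$) assemble to a loop $\ell_i$ lying in $U_i$, and extending the edge data to the whole square inside $U_i$ amounts to filling $\ell_i$. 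The comparison loop $\ell_i$ lies in the patch $U_i$, and semilocal simple connectivity is exactly what makes it null-homotopic; carrying out the filling so that every intermediate stage keeps the $i$-th strip inside $U_i$ (and each division value inside $V_i$) guarantees $H(\cdot,s)\in\mathcal V$ throughout. Concatenating the strip homotopies then produces the required path from $\delta$ to $\gamma$ in $\mathcal V$.

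The step I expect to be the main obstacle is precisely this last filling: semilocal simple connectivity delivers a null-homotopy of $\ell_i$ in $X$, whereas membership in $\mathcal V$ forces each intermediate loop's $i$-th strip to remain in the small patch $U_i$. Reconciling these requires choosing the patches so that the null-homotopy can be realized without leaving the controlled region, for instance by taking the $U_i$ simultaneously path-connected and, through the subdivision, fine enough that the relevant loops die inside a single patch rather than only globally in $X$. I would therefore spend most of the effort making the patch selection uniform enough that the $X$-null-homotopy of $\ell_i$ descends to a deformation supported in $U_i$, after which continuity of $H$ follows from the exponential law, local compactness of $I$, and the pasting lemma along the grid.
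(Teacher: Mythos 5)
The paper offers no proof of this lemma at all---it simply defers to the proof of Lemma 3.2 in [CM]---so your attempt has to be judged against the standard argument rather than against anything in the text. Your setup is exactly that standard grid construction and is correct as far as it goes: cofinal tube neighbourhoods $\bigcap_{i}\langle[t_{i-1},t_i],U_i\rangle$ with each $U_i$ path connected and contained in a set whose loops die in $X$, refined by the sets $V_i$ = path component of $\gamma(t_i)$ in $U_i\cap U_{i+1}$. The gap is precisely the one you flag at the end, and it is not a technicality that a cleverer choice of patches can repair: filling the $i$-th square with image in $U_i$ requires the comparison loop $\ell_i$ to be null-homotopic \emph{in} $U_i$, whereas semilocal simple connectivity only makes it null-homotopic \emph{in} $X$. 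Demanding the former for a basis of neighbourhoods is the strictly stronger hypothesis of local simple connectivity, which is not assumed. In fact the literal statement fails under the stated hypotheses: take $X$ to be the cone over the Hawaiian earring (contractible and locally path connected, hence semilocally simply connected) based at the wild point $x$, and $U=X\setminus\{\mathrm{vertex}\}$. The loops $\gamma_n$ traversing the $n$-th circle converge uniformly to the constant loop $e_x$ and lie in $\langle I,U\rangle$, but a path from $e_x$ to $\gamma_n$ inside $\langle I,U\rangle$ would be a null-homotopy of $\gamma_n$ inside $U$, which deformation retracts to the Hawaiian earring, where $\gamma_n$ is essential. So the path component of $e_x$ in the open set $\langle I,U\rangle$ is not open, and $\Omega(X,x)$ is not locally path connected.

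What the hypotheses do yield---and all that the paper actually uses downstream, since Theorem 6.4 only needs $\Omega(X,x)$ to be 0-SLSC, i.e.\ to have open path components---is obtained from your grid by dropping the requirement that the connecting path stay inside $\mathcal V$. For $\delta\in\mathcal V$, each loop $\bigl(p_{i-1}*\delta|_{[t_{i-1},t_i]}*\bar p_i\bigr)*\overline{\gamma|_{[t_{i-1},t_i]}}$ lies in $U_i$ and is therefore null-homotopic somewhere in $X$; concatenating these (wandering) null-homotopies produces a based homotopy $\delta\simeq\gamma$, which is exactly a path from $\delta$ to $\gamma$ in $\Omega(X,x)$. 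Hence $\mathcal V$ is contained in the path component of $\gamma$ in $\Omega(X,x)$, so the path components of $\Omega(X,x)$ are open and $\pi_1^{top}(X,x)$ is discrete. You should either weaken your conclusion to this statement (which your construction proves essentially verbatim once the containment constraint is dropped), or strengthen the hypothesis to local simple connectivity if you genuinely need local path connectedness of the loop space.
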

\begin{proof}
Use the proof of Lemma 3.2 in \cite{CM}.
\end{proof}
\begin{theorem}
For every locally path connected space $X$, $\pi_1^{top}(X,x)$ is discrete topological group, for each $x\in X$ if and only if $X$ is semilocally simply connected.
\end{theorem}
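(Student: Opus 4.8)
The plan is to reduce the entire statement to the $0$-SLSC machinery of Section~5 applied to the loop space, using the definitional identity $\pi_1^{top}(X,x)=\pi_0^{top}(\Omega(X,x))$. Recall that $\Omega(X,x)$ is an H-group whose principal component $\Omega(X,x)_0$ is a normal sub-H-group, and that by Lemma~3.22 together with Lemma~5.4 the quotient $\Omega(X,x)/\Omega(X,x)_0$ is both group-isomorphic and homeomorphic to $\pi_0^{top}(\Omega(X,x))=\pi_1^{top}(X,x)$. The conceptual bridge is Corollary~5.3: $\Omega(X,x)$ is $0$-SLSC if and only if $\pi_1^{top}(X,x)$ carries the discrete topology. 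Thus the whole theorem becomes a matter of matching the $0$-SLSC property of $\Omega(X,x)$ with semilocal simple connectivity of $X$.

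For the ``if'' direction I would invoke Lemma~6.3: since $X$ is locally path connected and semilocally simply connected, $\Omega(X,x)$ is locally path connected, hence $0$-SLSC by the Remark following Proposition~5.2. Then Theorem~5.11, applied to the pair $(\Omega(X,x),\Omega(X,x)_0)$, gives that $\Omega(X,x)/\Omega(X,x)_0$ is a \emph{discrete topological group}; transporting this structure along the isomorphism of Lemma~5.4 shows that $\pi_1^{top}(X,x)$ is a discrete topological group, which is the desired conclusion (discreteness already forces continuity of the group operations).

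The ``only if'' direction is where I expect the real work to lie. Assuming $\pi_1^{top}(X,x)$ is discrete, Corollary~5.3 yields that $\Omega(X,x)$ is $0$-SLSC, so by Proposition~5.2 every path component of $\Omega(X,x)$ is open in the compact-open topology; in particular the component $[e_x]$ of the constant loop, which is exactly the set of null-homotopic based loops, is open. The key step is to extract a neighborhood in $X$ witnessing semilocal simple connectivity. I would choose a basic compact-open neighborhood $W=\bigcap_{i=1}^{n}\langle K_i,U_i\rangle$ with $e_x\in W\subseteq[e_x]$; since $e_x(K_i)=\{x\}$ we must have $x\in U_i$ for every $i$. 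Setting $U=\bigcap_{i=1}^{n}U_i$, an open neighborhood of $x$, any loop $\gamma$ based at $x$ with $\gamma(I)\subseteq U$ satisfies $\gamma(K_i)\subseteq U\subseteq U_i$ for all $i$, hence $\gamma\in W\subseteq[e_x]$, so $\gamma$ is null-homotopic in $X$. Consequently the inclusion-induced map $\pi_1(U,x)\to\pi_1(X,x)$ is trivial, and $X$ is semilocally simply connected.

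The delicate point is this final construction: one must justify that the open component $[e_x]$ contains a genuinely sub-basic $W$ of the displayed form and that constraining the image of a based loop to $U$ simultaneously places it in every factor $\langle K_i,U_i\rangle$. Local path connectivity of $X$ is not strictly required for the necessity argument, but it is available as a standing hypothesis and can be used to replace $U$ by a path-connected open neighborhood if one prefers to state semilocal simple connectivity through path-connected charts.
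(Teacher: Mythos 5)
Your proposal is correct, and for the forward implication it follows the paper's proof verbatim: invoke Lemma~6.3 to get local path connectedness of $\Omega(X,x)$, hence $0$-SLSC by the remark after Proposition~5.2, and then conclude discreteness of $\pi_1^{top}(X,x)=\pi_0^{top}(\Omega(X,x))$ from Corollary~5.3 (a discrete quasitopological group being automatically a topological group). Where you genuinely diverge is the converse: the paper does not prove it at all, but simply defers to Calcut--McCarthy \cite[Theorem 1]{CM}, whereas you supply a complete and correct direct argument --- discreteness of $\pi_1^{top}(X,x)$ forces the path component $[e_x]$ of the constant loop to be open in $\Omega(X,x)$ (via Corollary~5.3 and Proposition~5.2), a basic compact-open neighborhood $\bigcap_{i=1}^{n}\langle K_i,U_i\rangle$ of $e_x$ inside $[e_x]$ yields the open set $U=\bigcap_{i=1}^{n}U_i\ni x$, and every loop with image in $U$ lands in that basic neighborhood and is therefore null-homotopic in $X$. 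This is essentially the Calcut--McCarthy argument reproduced in-line, and it makes the theorem self-contained; your observation that local path connectedness is not needed for this direction is also accurate (it is only used, via Lemma~6.3, to get local path connectedness of the loop space in the sufficiency direction). The only pedantic point worth flagging is that one should discard any empty $K_i$ (for which $\langle K_i,U_i\rangle$ is all of $\Omega(X,x)$) before asserting $x\in U_i$, but this does not affect the argument.
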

\begin{proof}
Assume $X$ is semi locally simply connected, by Lemma 6.3, $\Omega(X,x)$ is locally path connected. Hence Corollary 5.3 implies that
 $ \pi_0^{top}(\Omega(X,x))\cong\pi_1^{top}(X,x)$ is a discrete topological group. For converse see \cite[Theorem 1]{CM}.
\end{proof}
H. Wada in \cite{W} showed that for every m-dimensional finite polyhedron $Y$ and locally n-connected space $X$, the mapping space $X^Y$ is locally (n-m)-connected. Therefore we have
\begin{theorem}
For every locally n-connected pointed space $(X,x)$, the loop space $\Omega (X,x)$ is locally (n-1)-connected.
\end{theorem}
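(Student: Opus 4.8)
The plan is to exhibit $\Omega(X,x)$ as a mapping space over a one-dimensional polyhedron and then apply Wada's theorem \cite{W} with $m=1$. Concretely, I would identify $\Omega(X,x)$ with the space of based maps $(S^1,s_0)\lo(X,x)$, equivalently the maps of pairs $(I,\partial I)\lo(X,\{x\})$, equipped with the compact-open topology, where $S^1$ (or $I$) is a finite polyhedron of dimension $1$. The hyphen shift from $n$ to $n-1$ in the conclusion is then forced by $m=\dim S^1=1$.

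First I would record the absolute consequence: since $X$ is locally $n$-connected and $S^1$ is a $1$-dimensional finite polyhedron, Wada's theorem gives that the free loop space $X^{S^1}$ is locally $(n-1)$-connected. The remaining issue is the basepoint constraint, since $\Omega(X,x)$ is only the subspace $ev^{-1}(x)\sub X^{S^1}$, where $ev\colon X^{S^1}\lo X$ denotes evaluation at $s_0$, and local connectivity need not pass to subspaces.

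To handle this I would pass to the relative form of Wada's construction: instead of the full mapping space I consider the mapping space of the polyhedral pair $(I,\partial I)$ into $(X,\{x\})$, that is, the maps already pinned at the basepoint. Running Wada's neighborhood construction for this pair, in which the local contractions of low-dimensional spheres are assembled cell by cell over $I$ while the values on $\partial I$ are kept fixed at $x$, yields that this based mapping space is locally $(n-m)=(n-1)$-connected. Identifying it with $\Omega(X,x)$ then finishes the proof.

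I expect the main obstacle to be precisely this basepoint bookkeeping: one must check that the inductive, skeleton-by-skeleton contraction supplied by the local $n$-connectivity of $X$ can be carried out rel $\partial I$, so that every constructed homotopy consists of based loops and hence stays inside $\Omega(X,x)$ rather than merely inside $X^{S^1}$. Equivalently, in the evaluation picture $\Omega(X,x)\lo X^{S^1}\st{ev}{\lo}X$, the contracting disks produced in $X^{S^1}$ must be deformable into the fibre over $x$, and this is exactly where the local $n$-connectivity of $X$, one degree above the target dimension $n-1$, is consumed.
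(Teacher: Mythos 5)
Your approach is essentially the paper's: the paper proves this theorem by citing Wada's result that $X^Y$ is locally $(n-m)$-connected for $Y$ an $m$-dimensional finite polyhedron, taking $m=1$, and offers no further argument. The one place you go beyond the paper is in flagging the genuine subtlety that $\Omega(X,x)$ is the \emph{based} mapping space, i.e.\ a subspace (the fibre of evaluation at the basepoint) of the free loop space $X^{S^1}$, and that local $(n-1)$-connectivity of $X^{S^1}$ does not automatically descend to a subspace; the paper passes over this silently. Your proposed remedy --- rerunning Wada's cell-by-cell neighborhood construction for the polyhedral pair $(I,\partial I)\to(X,\{x\})$, keeping the endpoints pinned at $x$ throughout --- is the right idea and is where the hypothesis of local $n$-connectivity (one degree above the target) is genuinely used, but note that you have only sketched this relative version rather than verified it; to make the argument complete you would either carry out that rel--$\partial I$ induction explicitly or locate the relative form of the statement in Wada's paper. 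As it stands your proposal is at least as careful as the paper's own treatment.
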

In \cite{G1} it is shown that topological n-th homotopy group of every locally n-connected metric space is discrete topological group. In the following theorem we prove this result in general case, in fact without metricness.
\begin{theorem}
For every locally n-connected space $X$, $\pi_n^{top}(X,x)$ is a discrete topological group, for each $x\in X$.
\end{theorem}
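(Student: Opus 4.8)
The plan is to realize $\pi_n^{top}(X,x)$ as a quotient of an $n$-fold loop space and then feed it into the discreteness criterion of Theorem 5.11. Recall that, by definition, $\pi_n^{top}(X,x)=\pi_0^{top}(\Omega^n(X,x))$ and that $\Omega^n(X,x)=\Omega(\Omega^{n-1}(X,x))$ is itself a loop space, hence an H-group. Writing $P=\Omega^n(X,x)$ and letting $P_0$ be its principal component, Lemma 3.22 tells us that $P_0$ is a normal sub-H-group with $\pi_0(P)\cong P/P_0$, and Lemma 5.4 upgrades this to a homeomorphism $\pi_0^{top}(P)\cong P/P_0$ of topological groups. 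Thus everything reduces to verifying that $P=\Omega^n(X,x)$ satisfies the hypothesis of Theorem 5.11, namely that it is 0-SLSC.

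To obtain 0-SLSC for $P$, I would run Theorem 6.5 as an induction on the number of loopings. Starting from the hypothesis that $X$ is locally $n$-connected, Theorem 6.5 gives that $\Omega(X,x)$ is locally $(n-1)$-connected; applying the same theorem to $\Omega(X,x)$ yields that $\Omega^2(X,x)$ is locally $(n-2)$-connected, and after $n$ iterations we conclude that $\Omega^n(X,x)$ is locally $0$-connected, i.e. locally path connected. By the remark following Proposition 5.2, local path connectivity implies 0-semilocal simple connectedness, so $P=\Omega^n(X,x)$ is a 0-SLSC H-group. Theorem 5.11 then applies to $P$ with the normal sub-H-group $P_0$ and shows that $P/P_0$ is a discrete topological group; combining this with the identification $\pi_n^{top}(X,x)=\pi_0^{top}(P)\cong P/P_0$ finishes the argument.

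The main obstacle is the bookkeeping in the induction that establishes local path connectivity of $\Omega^n(X,x)$: one must check that at each stage the space produced by the previous looping is again a pointed space to which Theorem 6.5 legitimately applies, and that the connectivity index falls by exactly one at every step, so that precisely $n$ iterations bring a locally $n$-connected space down to a locally $0$-connected one. Once this careful descent to local path connectivity is in hand, the passage to discreteness is an immediate invocation of the Section~5 machinery (the remark after Proposition 5.2, Lemma 5.4 and Theorem 5.11), and requires no further work.
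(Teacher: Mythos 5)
Your proposal is correct and follows essentially the same route as the paper: iterate Theorem 6.5 to conclude that $\Omega^n(X,x)$ is locally path connected, hence 0-SLSC, and then apply the Section 5 discreteness machinery to $\pi_0^{top}(\Omega^n(X,x))\cong\pi_n^{top}(X,x)$. The only cosmetic difference is that you route the last step through Theorem 5.11 and Lemma 5.4 with $N=P_0$, whereas the paper invokes the corollary stating that a space is 0-SLSC if and only if its path component space is discrete; the content is identical.
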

\begin{proof}
By Theorem 6.5, $\Omega(X,x)$ is locally (n-1)-connected space and so $\Omega^n(X,x)$ is locally 0-connected or equivalently a locally path connected H-group. Also $\pi_0^{top}(\Omega^n(X,x))\cong\pi_n^{top}(X,x)$, thus $\pi_n^{top}(X,x)$ is a discrete topological group by Corollary 5.4.
\end{proof}
A topological space $X$ is called n-semilocally simply connected if for each $x\in X$ there exists an open neighborhood $U$ of $x$ for which any n-loop in $U$ is nullhomotopic in $X$. In \cite{G1} it is proved that for locally (n-1)-connected metric spaces, discreteness of $\pi_n^{top}(X,x)$ and n-semilocally connectivity of $X$ are equivalent. By using this and above theorem we have the same result without metricness.
\begin{corollary}
Suppose that $X$ is a locally (n-1)-connected space and $x\in X$. Then the following are equivalent:\\
i) $\pi_n^{top}(X,x)$ is discrete.\\
ii) $X$ is n-semilocally simply connected at $x$.
\end{corollary}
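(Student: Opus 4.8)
The plan is to peel off $n-1$ loop spaces and reduce everything to the already-settled case $n=1$, namely Theorem 6.4. Set $Y=\Omega^{n-1}(X,x)$, with base point the constant loop $e$. Iterating Theorem 6.5, local $(n-1)$-connectivity of $X$ makes $\Omega^{k}(X,x)$ locally $(n-1-k)$-connected, so that $Y=\Omega^{n-1}(X,x)$ is locally $0$-connected, i.e. locally path connected. By the very definition of the topological homotopy groups,
$$\pi_n^{top}(X,x)=\pi_0^{top}(\Omega^n(X,x))=\pi_0^{top}(\Omega(Y,e))=\pi_1^{top}(Y,e),$$
so discreteness of $\pi_n^{top}(X,x)$ is exactly discreteness of $\pi_1^{top}(Y,e)$ for the \emph{locally path connected} space $Y$. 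Theorem 6.4 then applies and tells us that $\pi_1^{top}(Y,e)$ is discrete if and only if $Y$ is semilocally simply connected. (For $n=1$ one has $Y=X$ and the corollary is literally Theorem 6.4; the content is in $n\geq 2$.) Thus it remains only to match the two semilocal conditions: $X$ is $n$-semilocally simply connected if and only if $Y=\Omega^{n-1}(X,x)$ is semilocally simply connected.

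I would set up this matching through the compact-open topology. For an open $U\sub X$ containing $x$, the subbasic set $\langle I^{n-1},U\rangle=\{\sigma\in Y\mid \sigma(I^{n-1})\sub U\}$ is an open neighborhood of $e$, and, under the loop (exponential) correspondence, a loop of $Y$ at $e$ with image inside $\langle I^{n-1},U\rangle$ is precisely an $n$-loop of $X$ with image in $U$; nullhomotopy in $Y$ corresponds to nullhomotopy in $X$, since in either picture it means lying in the path component of the constant map inside $\Omega^{n}(X,x)=\Omega(Y,e)$. This already gives the easy direction (i)$\Rightarrow$(ii): if $\pi_n^{top}(X,x)$ is discrete then the identity $\{[e]\}$ is open, hence its preimage (the path component of the constant $n$-loop in $\Omega^n(X,x)$) is open, and a basic compact-open neighborhood of the constant loop lying inside it is refined by some $\langle I^n,U\rangle$, so every $n$-loop in $U$ is nullhomotopic.

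The hard part is direction (ii)$\Rightarrow$(i), and the main obstacle is that semilocal simple connectivity of $Y$ must be verified not only at $e$ but at every point of the path component $Y_0=\wt{\{e\}}$ that $\pi_1^{top}(Y,e)$ sees --- and such behavior cannot simply be transported along paths, since $Y$'s translations are only homotopy equivalences, not homeomorphisms. I would instead use the $n$-semilocal hypothesis \emph{globally}: a point $\sigma\in Y_0$ is an $(n-1)$-loop with compact image in $X$, and covering that image by finitely many neighborhoods furnished by $n$-semilocal simple connectivity of $X$ (a compactness argument, parallel to the proof of Lemma 6.3 cited from \cite{CM}) produces a compact-open neighborhood of $\sigma$ in $Y$ all of whose loops are nullhomotopic in $Y$. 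This is the step where local $(n-1)$-connectivity is genuinely indispensable. Once $Y$ is known to be semilocally simply connected, Lemma 6.3 makes $\Omega(Y,e)=\Omega^n(X,x)$ locally path connected, hence $0$-SLSC, and Corollary 5.3 delivers discreteness of $\pi_0^{top}(\Omega^n(X,x))=\pi_n^{top}(X,x)$, completing the equivalence.
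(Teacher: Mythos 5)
Your strategy---peel off $n-1$ loop functors and reduce to Theorem 6.4 applied to $Y=\Omega^{n-1}(X,x)$---is not the paper's route: the paper simply invokes the metric-space version of this equivalence from \cite{G1} and asserts that Theorems 6.5 and 6.6 let one drop metrizability. Your direction (i)$\Rightarrow$(ii) is complete and correct (and needs no local connectivity at all): openness of the path component of the constant $n$-loop yields a basic neighborhood $\bigcap_i\langle K_i,U_i\rangle$ refined by $\langle I^n,\bigcap_i U_i\rangle$, which is exactly $n$-semilocal simple connectivity at $x$.

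The gap is in (ii)$\Rightarrow$(i), and it is the entire content of the corollary. First, the hypothesis is $n$-semilocal simple connectivity \emph{at $x$ only}, while your compactness argument must invoke it at every point of the image of an arbitrary $\sigma\in Y_0$; you even announce that you will ``use the hypothesis globally,'' but the statement does not grant that. This is not a cosmetic point: for $n=1$ the pointwise hypothesis is genuinely too weak---the Hawaiian earring based at a tame point is locally path connected and semilocally simply connected at that point, yet by Theorem 6.1 its topological fundamental group there is the usual non-discrete one---so your proof must commit to the global reading (as Theorem 6.4 does) and say so. Second, and more seriously, the key assertion that $n$-semilocal simple connectivity of $X$ together with local $(n-1)$-connectivity forces $Y=\Omega^{n-1}(X,x)$ to be semilocally simply connected is only gestured at. Your dictionary ``a loop of $Y$ inside $\langle I^{n-1},U\rangle$ is an $n$-loop of $X$ in $U$'' is valid only for loops based at the constant map $e$: a loop of $Y$ based at a non-constant $\sigma$ is a map $I^n\to X$ equal to $\sigma$ on two opposite faces, hence not an $n$-loop, so the hypothesis ``every $n$-loop in $U$ is nullhomotopic in $X$'' does not apply to it directly. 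Converting such maps into genuine $n$-loops and showing that the resulting compact-open neighborhood of $\sigma$ dies in $\pi_1(Y,\sigma)$ is precisely where local $(n-1)$-connectivity and a Lebesgue-number/subdivision argument enter; that is the argument from \cite{G1} which the paper leans on, and your proposal does not reconstruct it. Until that lemma is actually proved, the reduction to Theorem 6.4 does not close the argument.
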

\begin{definition} (\cite{V})
A loop $\al:(I,\partial I)\lo (X,x)$ is called {\it small} if there exists a representative of the homotopy class $[\al]\in\pi_1(X,x)$
in every open neighborhood $U$ of $x$. A non-simply connected space X is called {\it small loop space} if for every $x\in X$, every loop $\al:(I,\partial I)\lo (X,x)$ is
small.
\end{definition}
Biss in \cite{B} showed that the topological fundamental group of Harmonic Archipelago has indiscrete topology. Z. Virk in \cite{V} introduced a class of spaces, named small loop spaces, and constructed an example of small loop spaces by using Harmonic Archipelago. In the next theorem we will show that topological fundamental group of every small loop space has indiscrete topology and so is a topological group.
A basic account of small loop spaces may be found in \cite{V}.
\begin{theorem}
The topological fundamental group of every small loop space has trivial topology.
\end{theorem}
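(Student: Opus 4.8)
The plan is to show that the only nonempty open subset of $\pi_1^{top}(X,x)=\pi_0^{top}(\Omega(X,x))$ is the whole group, which is precisely the assertion that the topology is trivial. First I would record that, by Corollary 5.11, $\pi_1^{top}(X,x)$ is a quasitopological group and hence homogeneous; consequently every translation is a homeomorphism, so it suffices to prove that the only open neighborhood of the identity $[e_x]$ (the class of the constant loop $e_x$) is all of $\pi_1(X,x)$.

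Let $U$ be an open neighborhood of $[e_x]$ and let $q:\Omega(X,x)\lo\pi_1^{top}(X,x)$ be the quotient map. Then $q^{-1}(U)$ is an open subset of $\Omega(X,x)$ in the compact-open topology containing the constant loop $e_x$. The key topological step is to extract from $q^{-1}(U)$ an ordinary neighborhood of the basepoint: choose a basic compact-open neighborhood $\bigcap_{i=1}^{n} M(K_i,W_i)$ of $e_x$ contained in $q^{-1}(U)$, where $M(K,W)=\{f\ |\ f(K)\sub W\}$. Since $e_x(K_i)=\{x\}\sub W_i$, each $W_i$ is an open neighborhood of $x$, so $W=\bigcap_{i=1}^{n} W_i$ is an open neighborhood of $x$ in $X$. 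Every loop $\alpha$ at $x$ whose image lies in $W$ then satisfies $\alpha(K_i)\sub W\sub W_i$ for all $i$, hence $\alpha\in q^{-1}(U)$, i.e. $[\alpha]\in U$.

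It remains to invoke the defining property of a small loop space. Given an arbitrary class $[\alpha]\in\pi_1(X,x)$, smallness of $\alpha$ provides a representative $\beta$ whose image lies in the neighborhood $W$ of $x$ found above. By the previous paragraph $[\beta]\in U$, and since $[\beta]=[\alpha]$ we conclude $[\alpha]\in U$. As $[\alpha]$ was arbitrary, $U=\pi_1(X,x)$, so the only open neighborhood of the identity is the whole group; by homogeneity the topology on $\pi_1^{top}(X,x)$ is indiscrete, as claimed.

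I expect the only delicate point to be the passage from an open set in the compact-open topology to a genuine neighborhood of the basepoint of $X$; once $W$ is produced, the small loop hypothesis does all the remaining work, since it guarantees that every homotopy class is witnessed arbitrarily near $x$. The non-simply-connected clause in the definition is not needed for the argument itself; it only serves to guarantee that $\pi_1(X,x)$ is nontrivial, so that the conclusion \emph{indiscrete} is genuinely distinct from \emph{discrete}.
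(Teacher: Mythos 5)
Your argument is correct and follows essentially the same route as the paper's: pull back an open neighborhood of the identity under $q$, shrink to a basic compact-open neighborhood $\bigcap_{i}\langle K_i,W_i\rangle$ of $e_x$, intersect the $W_i$ to get a neighborhood $W$ of $x$, and use the small loop property to place a representative of any class inside $W$. The only cosmetic difference is that you argue directly via homogeneity that every neighborhood of the identity is the whole group, whereas the paper runs the same computation as a proof by contradiction.
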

\begin{proof}
Let $X$ be a small loop space and $x\in X$. If there exists an open subset $U$ of $\pi_1^{top}(X,x)$ such that $\varnothing\neq U\neq\pi_1^{top}(X,x)$, we can assume that $U$ contains $[e_x]$, the identity element of $\pi_1^{top}(X,x)$. Let $[\al]\in\pi_1^{top}(X,x)$ such that $[\al]\notin U$, then $q^{-1}(U)$ is an open neighborhood of $e_x$ in $\Omega(X,x)$ that does not contain $\al$.
 There is a basic open neighborhood of $e_x$ like $\bigcap_{i=1}^n<K_i,U_i>$ such that $e_x\in\bigcap_{i=1}^n<K_i,U_i>\sub q^{-1}(U)$. Let $V=\bigcap_{i=1}^n U_i$, then $<I,V>\sub q^{-1}(U)$. Note that $V$ is nonempty open subset of $X$, since $x\in U_i$, for each i=1,2,...,n. By small loop property of $X$, there exists a loop $\al_V:I\lo V$ such that $[\al]=[\al_V]$. But $\al_V\in <I,V>$ implies that $[\al_V]=q(\al_V)\in U$, hence $[\al]=[\al_V]\in U$, which is a contradiction.
\end{proof}
An n-Hawaiian like space $X$, means the
natural inverse limit, $\underleftarrow{lim}(Y_i^{(n)},y_i^*)$, where $$(Y_i^{(n)},y_i^*)=\bigvee_{j\leq i}(X_j^{n},x_j^*)$$
is the wedge of $X_j^{(n)}$'s
in which $X_j^{(n)}$'s are (n-1)-connected,
locally (n-1)-connected, n-semilocally simply connected, and
compact CW spaces. The third author et.al. in \cite{G2} proved that the topological n-homotopy group of an n-Hawaiian like space is prodiscrete metrizable topological group for all $n>1$. Also, they proved in \cite{G1} that for a metric space $X$, $\pi_n^{top}(X,x)\cong \pi_1^{top}(\Omega^{n-1}(X,x),e_x)$. Since weak join of metric spaces is metric, n-Hawaiian like spaces are metric which implies that $\pi_1^{top}(Y,y)\cong\pi_n^{top}(X,x)$, where $Y$ is $\Omega^{n-1}(X,x)$ and $y$ is $e_x$ for n-Hawaiian like space $X$. Therefore we have a family of spaces with topological fundamental groups as topological groups.
\begin{theorem}
If $Y=\Omega^{n-1}(X,x)$, for n-Hawaiian like space $X$ and $n>1$, then $\pi_1^{top}(Y,y)$
is a topological group. Moreover, it is a prodiscrete metric space.
\end{theorem}
\subsection*{Acknowledgements}
This research was supported by a grant from Ferdowsi University of Mashhad.

\end{document}